\newtheorem{theorem}{Theorem}[section]
\newtheorem{definition}{Definition}[section]
\newtheorem{lemma}{Lemma}[section]
\newtheorem{proposition}{Proposition}[section]
\newtheorem{corollary}{Corollary}[section]
\newtheorem{theorem}{Theorem}[section]
\newtheorem{definition}[theorem]{Definition}
\newtheorem{lemma}[theorem]{Lemma} 
\newtheorem{proposition}[theorem]{Proposition} 
\newtheorem{corollary}[theorem]{Corollary}
\newcommand{\cala}[0]{{\mathcal A}}
\newcommand{\call}[0]{{\mathcal L}}
\newcommand{\cale}[0]{{\mathcal E}}
\newcommand{\calf}[0]{{\mathcal F}}
\newcommand{\calk}[0]{{\mathcal K}}
\newcommand{\R}[0]{{\mathbb R}}
\newcommand{\Z}[0]{{\mathbb Z}}
\newcommand{\C}[0]{{\mathbb C}}
\newcommand{\ad}[0]{{\mbox{\rm Ad}}}    
\newcommand{\aut}[0]{{\mbox{ \rm Aut}}}
\newcommand{\na}[0]{\nabla}
\newcommand{\re}[1]{{(\ref{#1})}} 
\newcommand{\be}[1]{\begin{equation} \label{#1}}
\newcommand{\en}[0]{\end{equation}}
\newcommand{\id}[0]{{\rm id}}			
\newcommand{\diag}[0]{{\mbox{diag}}}
\newcommand{\entd}[0]{{\mbox{End}}}
\title{On the $K$-theory in 
splitexact 
algebraic $KK$-theory}  
\author[B. Burgstaller]{Bernhard Burgstaller}
\email{bernhardburgstaller@yahoo.de} 
\subjclass{19K35, 19A49, 19L47}
\keywords{$KK$-theory, $K$-theory, algebra, ring, 
splitexact,  universal, 
equivariant}
\date{\today}
\begin{document}


\begin{abstract}

It is 
proven that in the 
universal 
splitexact equivariant algebraic 
$KK$-theory 
for algebras,  
the $K$-theory groups 
coincide with classical 
$K$-theory  
in the sense of 
Phillips. 
  This partially answers a question raised by 
  Kasparov. 
\end{abstract}


\maketitle



\section{Introduction} 

In \cite{kasparov1981}, Kasparov has introduced $KK$-theory for the class 
of $C^*$-algebras, which 
incorporates and enriches $K$-theory significantly
by the existence of a product between the underlying $KK$-groups. 
The product allows one to view $KK$-theory as a category 
with object class the class of $C^*$-algebras, morphism 
sets the $KK$-groups and composition 
the Kasparov product, 
and 
actually, Cuntz \cite{cuntz1983, cuntz1984, cuntz1987} and Higson \cite{higson} have found out 
how to equivalently 
interpret  
the quite 
analytically defined $KK$-theory by Kasparov 
quite algebraically as the universal category that is deduced
from the category of $C^*$-algebras such that it becomes 
{\em splitexact}, homotopy invariant and stable. 
This may be concretely done by adding certain morphisms to 
and imposing relations on the category of $C^*$-algebras,
see 
\cite{bgenerators}, to 
obtain $KK$-theory, or more generally also 
equivariantly to get Kasparov's $KK^G$-theory  \cite{kasparov1988}.
 
In \cite{higson2}, Higson, and in a more tractable picture
Connes and Higson in \cite{conneshigson, 
conneshigson2}, have defined 
the 
universal {\em half-exact}, homotopy invariant 
and stable category for $C^*$-algebras coined $E$-theory. 
In another direction, 
later, 
Cuntz has defined and discussed $kk$-theory \cite{cuntz}, 
which is the universal half-exact, 
diffeotopyinvariant 
and stable category for the class of 
locally convex algebras 
by a completely different concept than $E$-theory. 
 Many authors have generalized Kasparov's $KK$-theory 
 in various directions, either by enlarging the object class
 to classes of algebras or rings, or by modifying 
 the universal properties, or something else.  
   An incomplete list of further samples 
include  Lafforgue's Banach $KK$-theory 
for Banach algebras \cite{lafforgue}, 
Cuntz \cite{cuntz2}, Cuntz and Thom 
\cite{cuntzthom}, {Corti\~nas} 
and Thom \cite{cortinasthom}, 
Ellis 
\cite{ellis}, 
Garkusha \cite{garkusha1,garkusha2}, 
Grensing \cite{grensing},  
and Weidner 
\cite{weidner,weidner2}.

To get a close analogy of $KK^G$-theory for $C^*$-algebras
for algebras and rings,
in the preprint \cite{gk}  
the above mentioned splitexact universal definition of $KK^G$-theory 
was used 
for the consideration of not only the category of $C^*$-algebras   
\cite{aspects}, 
but also the category of 
rings and algebras.   
A significant point was that we allowed for quite general
{\em invertible} corner embeddings 
$A \rightarrow \calk_A(\cale \oplus A)$ which not only incorporate 
projective modules but essentially all modules $\cale$. 
G. G. Kasparov in his function as a journal 
editor asked us what the typical $K$-theory group 
$GK^G(\R,\R)$ 
(or $GK^G(\Z,\Z)$ for rings) is. 
\if 0 
Also, to have proof that $GK^G$ does not degenerate or 
$GK^G(\R,\R)$ is unpractically complicated as he has pointed out.  
\fi
In this note we give a partial answer to his question in
the sense that we answer it completely if we allow only 
the most  
 {\em ordinary}  invertible corner embeddings 
$(A,\alpha)  \rightarrow (M_n \otimes A, \gamma \otimes \alpha)$ in $GK^G$-theory 
(then called {\em very special} $GK^G$-theory), that is,
dropping the 
generalized,  `unprojective' invertible corner embeddings 
completely. 
The answer given in Theorem 
\ref{thm42}  
is that $GK^G(\C,A)$
is ordinary $K$-theory 
as defined in Philips 
for Fréchet algebras   \cite{phillips}   
in case the group $G$ is trivial  
(modulo obvious differences arising from the considered
class of algebras),  
which is also what comes out for
$kk(\C,A)$ in 
Cuntz \cite{cuntz} for locally convex algebras.  
Cuntz' involved proof using half-exactness
and Toeplitz extensions, 
however, completely diverges from the 
deduction in this note. 

As pointed out in Lemma 
\ref{lemma31}, very special $KK^G$-theory is sufficient 
to have a Green-Julg theorem, 
and not more than the most ordinary, very special corner 
embeddings 
are 
axiomatically declared to be invertible in Ellis' work \cite{ellis}  on the Green-Julg theorem 
for the half-exact $G$-equivariant  
generalization $kk^G$-theory of the 
non-equivariant 
half-exact 
$kk$-theory for rings by 
{Corti\~nas} and 
Thom \cite{cortinasthom}, 
itself 
a generalization or 
adaption of Cuntz' $kk$-theory for 
locally convex algebras  \cite{cuntz}.
  
We would like to remark that 
omitting the generalized inverse corner embeddings  
from \cite[Definition 3.2]{gk}  
has some implications: 

(a) Conjecturally (but not discussed in the sense of a proof),
contrary to \cite{gk}, we may allow the category of all 
algebras or rings, 
because the restriction to {\em quadratik}
rings as in \cite{gk} is 
essentially only necessary for the multiplication map
$A \rightarrow \calk_A(A)$ to be injective, 
to have 
the adjoint $G$-action $\ad(\alpha)$ on $\call_A((A,\alpha))$ available which uses 
multiplication in $A$, 
and as a weaker form of but sufficient substitute for an 
approximate unit occasionally.   

\if 0
The rest we achive by unitalizing algebra 
\fi

(b) The proof that a functor on $GK^G$-theory is injective if it is injective on the span of homomorphisms, \cite[Theorem 10.8]{gk}, 
breaks {\em completely}
down. 
Similarly so 
for \cite[Proposition 17.6]{gk}, 
that the Baum-Connes map 
is injective if it is injective on the span of homomorphisms. 
The reason is that homomorphisms cannot skip inverse corner embeddings 
as explained in   
\cite[Lemma 8.3]{gk}.  
This 
implicitly stresses the great worth of general `unprojective'  
invertible corner embeddings.

(c) Of course, one has not the generalized Morita 
equivalence,  
\cite[Theorem 14.2]{gk}.  
  
  \if 0
 
The product in $GK$-theory is now a formal product in a category 
of generators and relations, as the excellent analysis provided 
by $C^*$-algebras is missing for general algebras, whence 
not much can be verbatim transferred from $KK$-theory to $GK$-theory. 
The significant question what the typical $K$-theory group 
$GK(\C,A)$ was not handled in ... 
In reality G. G. Kasparov in his function as a journal 
editor has asked us    

\fi

The proof of $GK^G(\C,A) \cong K^G(A)$ is as follows. 
A key point is to bring simple 
{\em level-one} 
morphisms in $L_1 GK^G(\C,A) \subseteq GK^G(\C,A)$
(each element of $GK^G(A,B)$ is 
a product of level-one morphisms by 
\cite[Lemma 10.2]{gk}) to an even simpler form
called standard form in Proposition  \ref{pstand}, 
to be compared with the standard 
picture of $K$-theory. 
This allows one to form the product 
\be{eq170}
L_1 GK^G(\C,A) \times L_1 GK^G(A,B)
\rightarrow L_1 GK^G(\C,B)
\en 
to simple level-one elements again in Proposition \ref{prop02}, 
and by continuously repeating this argument, we already get 
in Corollary \ref{cor1}  
that each element in  $GK^G(\C,A)$ allows a simple representation 
comparable to those of 
ordinary $K$-theory elements.  
In Theorem \ref{thm42}, we then also 
tediously observe that this procedure plays
the equivalence relations of $GK^G$-theory 
to the simple equivalence relations
of ordinary $K^G$-theory down, yielding the actual main result that
$KK^G(\C,A) \cong K^G(A)$. 

Note 
that in 
Proposition \ref{pstand} we allow all general unprojective 
corner embeddings to be invertible in $GK^G$, and 
Proposition 
\ref{prop02} works partially 
beyond very special $KK^G$-theory.

Note that our strategy of proof 
not only merely shows that $GK^G(\C,A)$ 
has a simple form (as for example 
in Kasparov \cite[§6 Theorem 3]{kasparov1981} 
for $KK^G$), 
but how to compute the general product \re{eq170} 
without the restrictions to level-one, $L_1$, 
algorithmically, 
confer also the 
remark at the end of Section \ref{sec3}.

Let us remark that we proposed 
the {\em possible} concept
also to Kasparov, but we 
had not  
Proposition   \ref{pstand}  
completely
as we had already  
had the idea to rotate one  
homomorphism via a projection,
but not how to rotate the other homomorphism. 
A short while 
later we 
accidentally read the word unitary in another context,
and it was {\em immediately} clear that we should 
trivially complete the projection to 
a {\em unitary} to rotate both 
homomorphisms simultaneously  
and that it will go. This is this note. 

\if 0
However, even under endless efforts we could not simplify
the $K$-theory with incorporating all invertible un-prjective
corner emebdddings, we thus we strongly suspect it cannot be done,
and so we decided to publish this note in this form. 

However it appears to us that recently we discovered 
a schlupfloch which is able to allow sufficiently many
unprojective modules to preserve the above important property
in non-equivariant $GK$-theory to be treated in another note. 
 \fi

\section{Standard form of $K$-theory}

At the beginning of this section we briefly recall 
$GK^G$-theory in as far as needed for 
the paper, 
but refer for further details to \cite{gk}. 
The key result of this section is Proposition \ref{pstand}. 


We discuss everything in the realm of a small but sufficiently big category of algebras over the field $\C$, but 
might have 
had 
also considered algebras over other fields or even the category of 
quadratik rings as 
in \cite{gk}, 
by complexifying them at first 
to complex algebras before performing ordinary homotopy 
as in \cite{gk}. 

\if 0
 by axiomatically choosing a homotopy 
version by complexification of a ring and then doing ordinary 
homotopy. 
\fi

\if 0
We discuss everything in the realm of a small but sufficiently big category of algebras over the field $\C$, but 
might 
had also considered algebras over other fields or even the category of quadratic rings as 
in \cite{gk}, 
from which we obstained to avoid 
more straightforward technalities. 
\fi

\subsection{$G$-algebras}     \label{sec21}  

Let $G$ be a given discrete group. 
An algebra $A$ is called {\em quadratik} if 
each $a \in A$ 
can be presented as a finite sum of products 
$a_1 b_1 + \ldots + a_n b_n$ in $A$ for $a_i,b_i \in A$. 
Let 
$\Lambda^G$ be a  
{\em small} category where the object class
is a {\em set} of 
quadratik $G$-algebras over the field $\C$ (i.e. algebras $A$ equipped with a group homomorphism $\alpha : G \rightarrow \aut(G)$), 
sufficiently  
closed under all elementary constructions we need throughout
(direct sum, matrix algebra, algebra of compact and adjointable operators etc.), 
and the morphism sets are all $G$-equivariant algebra homomorphisms between 
them.
  For a $G$-algebra $A$, $A^+$ denotes its 
  unitization, and for a homomorphism 
$\varphi:A \rightarrow B$ in $\Lambda^G$, $\varphi^+ : A^+ \rightarrow B^+$ 
its canonical extension in $\Lambda^G$ if nothing else is said.  
Throughout, all homomorphisms and algebras are understood to live in 
$\Lambda^G$. 
The matrix algebras with coefficients in $\C$ 
are denoted by $M_n=M_n(\C)$, where $M_\infty =
\cup_{n \ge 1} M_n$.  
If we want to ignore any $G$-structure or $G$-equivariance 
on algebras or homomorphisms, we sometimes emphasize this by the adjective 
{\em non-equivariant}. 

\if 0
$\Gamma^G$ be a full 
{\em small} subcategory of 
the usual category of $G$-algebras with all $G$-equivariant homomoprhisms as morphims, suffciently 
closed under all elementary constructions we need throughout
(direct sum, matrix algebra etc).  
\fi

\if 0
Let $\Lambda^G$ be a full small category of $G$-algebras with $G$-equivariant homomoprhisms as morphims, suffciently 
closed under all elementary constructions we need throughout
(direct sum, matrix algebra etc).  
A {\em corner embedding} $e$ is a $G$-equivariant homomprhism 
of the form
$e:A \rightarrow M_n(A): a \mapsto \diag(a,0,...,0)$, where $1 \le n \le \infty$.
\fi

\subsection{Corner embeddings}

\label{sec22}

A particular feature of $GK^G$-theory inherited from $KK^G$-theory 
is a wider class of corner embeddings 
than the usual matrix 
embeddings  
axiomatically declared to be invertible 
in $GK^G$ in Subsection \ref{subsec21}.(d). 
We always use the adjoint $G$-action on the algebra of $G$-equivariant $A$-module homomorphisms 
on $A$-modules, 
and on subalgebras thereof,
that is, $\hom_A \big ((\cale,S),(\calf,T) \big )= \big (\hom(\cale,\calf), \ad(S,T) \big )$, where 
$\ad(S,T)_g (X) :=   
T_g \circ X \circ S_g^{-1}$  
for $g \in G$.  
Recall from \cite[Definitions 2.1-2.3]{gk} that 
a {\em functional module} $(\cale,\Theta_A(\cale))$ over a $G$-algebra $A$ is a  
$G$-equivariant 
right $A$-module
$\cale$   together with a distinguished $G$-invariant functional space  $\Theta_A(\cale) \subseteq 
\cale^*:=\hom_A(\cale,A)$,  
closed under left multiplication by $A$,
comprised of $A$-linear $G$-equivariant maps from $\cale$ to $A$. 
Often functional modules are supposed to be cofull \cite[Definition 2.13]{gk}. 
The {\em adjointable operators} $\call_A(\cale) \subseteq \hom_A(\cale)$    
($G$-equivariant $A$-linear 
endomorphisms on 
$\cale$) 
is the subset of those 
operators $T$ satisfying $\phi \circ T \in \Theta_A(\cale)$ for all
$\phi \in \Theta_A(\cale)$, see \cite[Definition 2.6]{gk}, and forms a $G$-algebra.     
The {\em compact operators} $\calk_A(\cale) \subseteq \call_A(\cale)$ 
\cite[Definition 2.4]{gk} 
is  the $G$-invariant subalgebra  
of the adjointable operators  
which is the linear span of all  
elementary operators of the form $\Theta_{\xi,\phi}:=\xi \cdot \phi$ for $\xi \in \cale$ and $\phi \in 
\Theta_A(\cale)$. 
A {\em corner 
embedding} is 
a 
$G$-equivariant 
homomorphism $e: (A,\alpha) \rightarrow \calk_{(A,\alpha)} \big ((\cale,S) \oplus (A,\alpha)
 \big )$ defined by $e(a)(\xi \oplus b)= 0_\cale \oplus   
a b$ for all $\xi \in \cale$ and $a,b \in A$, see
\cite[Definition 3.2]{gk}. 

Typically, given a subalgebra $X \subseteq \call_B(\cale)$, its 
unitization can be naturally  
realized by embedding it into $X^+ \subseteq \call_B(\cale \oplus B)$.

\subsection{$GK^G$-theory}

\label{subsec21}

We briefly recall the definition 
$GK^G$-theory from \cite[Definitions 3.5]{gk}, 
a category defined by generators and relations. 
By its definition it is the universal  splitexact, homotopy invariant 
and stable theory for the 
category $\Lambda^G$.
  
%
{\bf The generator set.} 
Object set is the set of objects of $\Lambda^G$.  
Write $\Theta$, the generator morphism set,  for the collection of {\bf (a)} all 
$G$-equivariant 
homomorphisms $\varphi:A \rightarrow B$ between all $G$-algebras 
$A$ and $B$ of $\Lambda^G$, and {\bf (b)} the {\em synthetical} morphisms 
$e^{-1} :  
\calk_A(\cale \oplus A) \rightarrow A$ for each 
corner embedding $e : A \rightarrow \calk_A(\cale \oplus A)$  
of $\Lambda^G$, 
and  
{\bf (c)} the {\em synthetical} splits $\Delta_{S}: M \rightarrow J$ (denoted sloppily by $\Delta_s$) 
for each {\em splitexact sequence} in 
$\Lambda^G$, called $S$, 
\be{eq32}
\xymatrix{ 
  0  \ar[r]  &  J \ar[rr]_i   
  &&  M    
\ar@<.5ex>[rr]^f   \ar@<-.5ex>[ll]_{\Delta_{s}}   
 &&  A   \ar[ll]^{s} 
 \ar[r]   & 0  
     }
     \en
     (here $\Delta_s$ is not part of the definition of the diagram, 
     but notated for purposes of anticipation). 
     {\bf The free morphisms.} 
Then denote by $L$ the set of all valid finite expressions 
(= strings) comprised
of letters from $\Theta$, a product sign $\cdot$ (usually omitted)
and plus $+$ and minus $-$ signs, and brackets $($ and $)$,
in the sense that source and range objects of the letters of $\Theta$
fit together, and the expressions are valid in the usual additive   
category theoretical sense with respect to brackets and the algebraic operations. 
 
The order of writing in $L$ (and in $GK^G$) is from left to right, that is, morphisms 
$f:A \rightarrow B$ and $g: B \rightarrow C$ are composed 
to $fg$.

 {\bf The quotient by relations.} 
 Then the additive category $GK^G$ 
 ({\em Generators and relations $KK$-theory with $G$-equivariance})
 is defined to be the set-theoretical 
 quotient 
 $L/R$, where $R$ is the 
 equivalence relation on $L$ 
 induced by {\bf (a)} the associativity and distribution laws 
 with respect to brackets and the algebraic operations,
 {\bf (b)} the composition
law 
$\varphi \circ \psi  \equiv_R \psi \varphi$ 
for composable 
homomorphisms  $\varphi$ and $\psi$,  
{\bf (c)}
the one-law that the 
homomorphism $\id_A: A \rightarrow A$ 
is a one-sided unit 
for each generator in $\Theta$ starting or ending at $A$,  
{\bf (d)}
the law that each corner embedding $e: A \rightarrow \calk_A(\cale \oplus A)$  
is invertible with inverse (synthetical) morphism $e^{-1} \in \Theta$,
{\bf (f)}
the splitexactness axioms 
$1_M \equiv_R \Delta_s i + f s$ and 
$1_J  \equiv_R i \Delta_s$ for each splitexact
sequence $S$ as stated in \re{eq32}, and 
{\bf (g)} homotopy invariance, that is
$\varphi_0 \equiv_R \varphi_1$ for 
each homotopy 
homomorphism 
$\varphi:A \rightarrow B[0,1]$, 
where $B[0,1]:= (B,\beta) \otimes (C([0,1]),\mbox{triv})$.  
%

We usually 
express elements of $GK^G$ by writing 
representants of $L$ down and do not use class brackets.

\subsection{Extended double splitexact sequence}

\label{subsec22}

We briefly recall the concept of extended splitexact sequences in $GK^G$,
see \cite[Definitions 4.1, 5.1 and 9.1]{gk}. 

{\bf (a)}
A typical building block of 
$GK^G$ 
used throughout from which all morphisms can be composed is the morphism
and product of elements in $\Theta$ 
\be{eq33} 
(s_+ \oplus \id_A) \cdot \;f_2  \cdot f_1^{-1} \; \cdot \Delta_{s_- \oplus \id_A} \;  \cdot e^{-1} 
\en
in $GK^G$,   
sloppily denoted by
$s_+ \na_{s_-}$ for short, associated to the following diagram 
of morphisms 
in the category $GK^G$ 
called {\em extended double splitexact sequence}, 
\be{eq35}
\xymatrix{ 
B     \ar[r]^-e&  J \ar[rr]_-i   
  &&  X    
\square_{s_-  \oplus \id_A} A     
\ar@<.5ex>[rr]^-f   \ar@<-.5ex>[ll]_-{\Delta_{s_- \oplus \id_A}}   
 &&  A   \ar[ll]^-{s_\pm \oplus \id_A} 
     } ,	
     \en
    
 where without the 
corner embedding $e$, the above diagram is a splitexact sequence 
as in \re{eq32} with respect to the 
split $s_- 
\oplus \id_A$, where the zeros at the ends are 
technically 
there but not drawn.


{\bf (b)} 
More precisely, 
here $B,J,X$ and $A$ are $G$-algebras. 
Further, $s_-: (A,\alpha) \rightarrow (X,\gamma_-)$ and 
$s_+: (A,\alpha) \rightarrow (X,\gamma_+)$
are two given equivariant homomorphisms.  
Note that $X$ is equipped with two $G$-actions, $\gamma_-$ and
$\gamma_+$.  
Further, $s_\pm \oplus \id_A: A \rightarrow (X \oplus A, \gamma_\pm \oplus \alpha)$
denote the two equivariant canonical homomorphisms 
$(s_\pm \oplus \id_A)(a) := s_\pm(a) \oplus a$, 
sloppily 
also when the range algebra $X \oplus A$ is restricted
to a subalgebra (as for example everywhere in diagram \re{eq35}) . 

Moreover, $J$ is a two-sided $G$-invariant ideal in $(X,\gamma_-)$,
and $i : 
J \rightarrow (X,\gamma_-)$ denotes its equivariant 
embedding. Here, $e$ is an equivariant  corner embedding.




{\bf (c)} 
One also requires that 
$s_+(a) -s_-(a) \in J$ for all $a \in A$.
%
Consequently, the two 
equivariant {\em subalgebras} 
of $X \oplus  A$, 
$$M_\pm  :=   X \square_{s_\pm   \oplus  \id_A} A  
\; := \quad J \oplus 0  \; +  \;(s_- \oplus \id_A)(A)
\quad \subseteq  \quad (X \oplus A, \gamma_\pm \oplus \alpha)$$ 
are the same {\em non-equivariantly}, that is, $M:= M_- = M_+$.
%
\if 0
Throughout, sloppily 
$s_\pm \oplus \id_A: A \rightarrow M_\pm $ also denotes 
the above defined maps when the range object is restricted 
to $M_\pm$.
%
\fi
Finally, 
$f: M_- \rightarrow A$ is the equivariant canonical projection 
$f(x \oplus a) = a$ onto $A$.   

\if 0 
and $i : 
J \rightarrow (X,\gamma_-) \subseteq M_-$ denotes the canonical identical equivariant 
embedding. Here, $e$ is an equivariant  corner embedding. 
. 
\fi




Note that independent of the size of the 
auxiliary algebra $X$, 
$M$ is always the same, namely {\em linearly} isomorphic to the direct sum
$J \oplus A$. 

{\bf (d)} 
Without the corner embedding $e$, the above diagram 
is {\em non-equivariantly} 
 a double split exact sequence with two splits
 $s_\pm \oplus \id_A$. 
To make it - 
informally speaking - equivariantly double split,
the canonical equivariant corner embeddings 
$f_1,f_2$
$$\xymatrix{ M_- 
 \ar[rr]^{f_1} & &  (M_2(M), \delta)
 & &   M_+  \ar[ll]^{f_2}}
$$ 
into the top left and bottom right corners, respectively, 
are 
assumed to be provided. 
 The task of $f_2 \cdot f_1^{-1}$ in formula \re{eq33} is
 to get a path from $M_+$ to $M_-$, thereby changing 
 the $G$-action from $\gamma_+$ to $\gamma_-$. 
 
 The last $\delta$ is called the {\em $M_2$-action} of line 
 \re{eq35}. 
 
 {\bf (e)} 
 From now on we sloppily abbreviate $s_\pm \oplus \id_A$
 by $s_\pm$ in diagrams and also 
 in subscripts of $\Delta$ and $\square$. 
 That means, from now on we shall replace every occurrence 
 of $s_\pm \oplus \id_A$ by $s_\pm$ in a diagram like \re{eq35}. 
  
{\bf (f)}
  Finally, 
  we have also entered the synthetical split $\Delta_{s_- \oplus \id_A}$ in the above diagram \re{eq35}, which may be heuristically viewed 
  as the canonical {\em linear} split $u: M_- \rightarrow J$ which projects onto $J$. 
  A crucial point of $GK^G$-theory then is that we have that
  synthetical split available as a morphism, making this theory 
  splitexact.  
  
  \subsection{}
  {\bf Level one morphisms.} 
  The subset of all morphisms \re{eq33} of $L$ as discussed above is denoted
  by $L_1 \subseteq L$. They are called {\em level-one 
  morphisms}  
  both in $L$ and as canonical representants in $L/R$.
The canonical image of $L_1$ in $GK^G$ is also denoted by $L_1 GK^G$. 
  
  {\bf Correspondence between morphisms and diagrams.} 
  Informally, two different diagrams \re{eq35} 
  are called { equivalent} if their associated elements
  \re{eq33} are identical in $GK^G$.  
  It is much easier to work with pictorial diagrams like \re{eq35} than
  with formal algebraic expressions like \re{eq33}, whence we often
  use the 
diagram style and its formal expression interchangeable,
  often without much saying. 
  It is understood that diagrams like \re{eq35} {\em are} extended splitexact
  sequences, even if $\Delta$ is not entered.




\subsection{Presenting 
							generators}  \label{section26}

In $GK$-theory we may present each generator of $\Theta$
by a level-one morphism in $L_1$: 
 
\begin{lemma}			\label{lem1}

The generators $\Theta$  
of $GK^G$-theory can 
be 
expressed as level-one morphisms 
as stated in formula \re{eq33} 
in $GK^G$-theory as follows. If $\varphi:A \rightarrow B$ is a homomorphism, $e^{-1}: J \rightarrow A$ an inverse  corner embedding, and $\Delta_{s}: M \rightarrow J$ 
a synthetical split associated to 
the splitexact sequence \re{eq32}, then their level one morphism correspond to the following diagrams, respectively,
\be{eq10}
\xymatrix{  
B     \ar[r]^\id &  B \ar[r]     & B    
\square_{0 } A    \ar@<.5ex>[rr] 
  &&  A   \ar[ll]^{\varphi,\; 0} 
 & (\varphi)  ,
     }
     \en
\be{e11}
\xymatrix{   A  \ar[r]^e    
 &  J  \ar[r] & J 
\square_{0 } J   
\ar@<.5ex>[rr]  
 &&  J  \ar[ll]^{\id,\; 0} 
& (e^{-1} )  ,
}  \en
\be{e12}
\xymatrix{    
J  \ar[r]^\id  &  J  \ar[r]^{i \oplus 0}  & M 
\square_{s \circ f } M   
\ar@<.5ex>[rr]
 &&  M  \ar[ll]^{\id ,\; s \circ f} 
 &   ( \Delta_s  )  .
}
\en

(For example, $\varphi = \varphi \Delta_0 \id^{-1}$, the corner embeddings  $f_1,f_2$ 
can be 
omitted, or set identical $f_1 =f_2$,  in all diagrams above.)
\end{lemma}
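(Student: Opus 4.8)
The plan is to treat the three generators by one common mechanism: for each displayed diagram I first check that it genuinely is an extended double splitexact sequence in the sense of Subsection~\ref{subsec22}, and then I evaluate its associated level-one expression \re{eq33} and collapse it to the generator using only the defining relations (a)--(g) of $GK^G$. The starting simplification is the same in all three cases. In \re{eq10}, \re{e11} and \re{e12} the auxiliary algebra $X$ (equal to $B$, $J$ and $M$, respectively) carries a single $G$-action, and both $s_+$ and $s_-$ are equivariant homomorphisms into it (in each case one of them is $0$ or $s\circ f$ and the other is a fixed homomorphism); hence $\gamma_+=\gamma_-$, so $M_-=M_+$ even equivariantly and the two corner embeddings coincide, $f_1=f_2$. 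By the invertibility relation (d) this gives $f_2 f_1^{-1}=\id$, so \re{eq33} reduces to $(s_+\oplus\id)\,\Delta_{s_-}\,e^{-1}$. It then remains to verify that $J$ is a two-sided $G$-invariant ideal of $X$ and that $s_+(a)-s_-(a)\in J$; the first holds since $J=X$ in \re{eq10} and \re{e11} and $J=\ker f$ in \re{e12}, and the second reads, respectively, $\varphi(a)\in B$, $a\in J$, and $m-s(f(m))\in\ker f$, all of which are immediate.

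The computational core for the first two cases is the identity $(s_+\oplus\id)\Delta_0=s_+$ whenever $s_-=0$ and $s_+$ is a homomorphism into the ideal $J$. I would precompose the splitexactness relation $1_{M_-}=\Delta_0\, i + f\,(0\oplus\id)$ from (f) with $s_+\oplus\id$, use $(s_+\oplus\id)f=\id$ (composition law (b)) to cancel the second summand, and invoke additivity of the two orthogonal homomorphisms $s_+ i$ and $0\oplus\id$ to obtain $(s_+\oplus\id)\Delta_0\, i = s_+\, i$; cancelling $i$ on the right through $i\Delta_0=1_J$ from (f) then yields $(s_+\oplus\id)\Delta_0=s_+$. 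With $s_+=\varphi$ this gives \re{eq10}$=\varphi\cdot\id=\varphi$, and with $s_+=\id_J$ it gives $(s_+\oplus\id)\Delta_0=\id_J$, whence \re{e11}$=\id_J\cdot e^{-1}=e^{-1}$ by the one-law (c). (Equivalently, and more conceptually, for $s_-=0$ the naturality of the synthetical split with respect to the morphism into the trivial sequence $0\to J\xrightarrow{\id}J\to 0$ shows $\Delta_0$ equals the coordinate projection $p_1$, an honest homomorphism, so the whole computation becomes a composition of homomorphisms.)

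The main obstacle is \re{e12}, where $s_-=s\circ f$ is nonzero, $\Delta_{s_-}$ is no longer a plain projection, and the cancellation above does not by itself recover the original split $\Delta_s$. Attempting the direct relation manipulation one computes $(\id_M\oplus\id_M)\Delta_{s_-}\,i_M=\delta-\sigma$ and checks that both this class and $\Delta_s\,i_M$ agree after composing with the two projections of $M_-\subseteq M\oplus M$; but concluding equality from this would require the pair of projections to be jointly monic in $GK^G$, which is not available. I would therefore instead use naturality of the synthetical split. The triple $(\id_J,\,p_1,\,f)$, with $p_1\colon M_-\to M$ the first-coordinate projection, is a morphism of splitexact sequences from the sequence underlying \re{e12} to the original sequence \re{eq32}: the ideal square gives $i_M p_1=i$, the projection square $p_1 f=f_M f$ holds because $f(x)=f(m)$ on the fibre product $M_-=\{x\oplus m: f(x)=f(m)\}$, and the split square reads $(s_-\oplus\id)\,p_1=f\,s$. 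Naturality of $\Delta$ (available from \cite{gk}, or derivable from relation (f)) then yields $\Delta_{s\circ f}=p_1\,\Delta_s$, and since the diagonal $s_+\oplus\id=\id_M\oplus\id_M$ satisfies $(\id_M\oplus\id_M)\,p_1=\id_M$, I conclude \re{e12}$=(\id_M\oplus\id_M)\,p_1\,\Delta_s=\Delta_s$. The delicate points are thus having naturality of $\Delta$ on hand and verifying that $(\id_J,p_1,f)$ really respects the splits; identifying $M_\pm$, $e$ and the ideal in each picture is routine bookkeeping, and the parenthetical remark in the statement (omitting $f_1,f_2$, or setting $f_1=f_2$) is exactly the observation $\gamma_+=\gamma_-$ made at the outset.
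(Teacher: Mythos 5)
Your proposal is correct, but it takes a genuinely different route from the paper. The paper's proof is a two-line delegation to the companion paper: \re{eq10} is \cite[Lemma 7.1]{gk}, \re{e12} is \cite[Lemma 7.4]{gk}, and \re{e11} is obtained---exactly as you also do---by composing \re{eq10} for $\varphi=\id_J$ with $e^{-1}$. You instead derive all three identities from the defining relations of Subsection \ref{subsec21}: the observation $\gamma_+=\gamma_-$ (hence $f_2f_1^{-1}=\id$, which the parenthetical remark of the lemma licenses) reduces \re{eq33} to $(s_+\oplus\id)\Delta_{s_-}e^{-1}$; for $s_-=0$ you identify $\Delta_0$ with the coordinate projection; and for \re{e12} you get $\Delta_{s\circ f}=p_1\Delta_s$ by naturality of the synthetical split along the morphism of sequences $(\id_J,p_1,f)$, whose three squares you check correctly (note that the fibre-product description of $M\square_{s\circ f}M$ uses exactness of \re{eq32}, i.e.\ $\ker f=i(J)$). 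Your remark that naturality is itself derivable from the relations is right: writing $i',\Delta',s',f'$ for the target sequence and using $1_{J'}=i'\Delta'$, the ideal square $\alpha i'=i_S\beta$, the relation $\Delta_Si_S=1_{M_S}-f_Ss_S$, the split square $s_S\beta=\gamma s'$, and the derived identity $s'\Delta'=0$ (a consequence of $s'f'=\id$, (b) and (f)), one gets $\Delta_S\alpha=\Delta_S\alpha\,i'\Delta'=\Delta_Si_S\beta\Delta'=\beta\Delta'-f_S\gamma\,s'\Delta'=\beta\Delta'$; this is the same commuting-diagram principle the paper invokes elsewhere as \cite[Lemma 4.3.(iii)]{gk}. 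As for trade-offs: the paper's citation keeps the lemma short and leans on machinery the rest of the paper needs anyway, while your argument is self-contained and isolates where the real content sits, namely in \re{e12}. One caution: your primary route for \re{eq10} and \re{e11} invokes additivity of orthogonal homomorphisms, which is a derived lemma of the same depth as naturality rather than one of the raw relations (a)--(g); your parenthetical alternative ($\Delta_0=p_1$ via naturality against the trivial sequence $0\to J\to J\to 0$) is the cleaner way to stay within the relations and deserves to be promoted to the main argument.
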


\begin{proof}

The first line is by 
\cite[Lemma 7.1]{gk}, 
the second one 
trivially by the first line for $\varphi = \id_J$ 
composed 
with $e^{-1}$, and the third line by
  \cite[Lemma 7.4]{gk}.
\end{proof}

\subsection{Standardform of level-one $K$-theory}

A key is to 
bring 
level-one elements starting at $\C$ 
to a particular
simple 
``standard form"  in $GK$-theory, best to be compared
with the standard form in ordinary $K$-theory for
operator algebras: 

\begin{proposition}[Standardform of level 
one $K$-theory elements]   
\label{pstand}

Every level-one 
morphism $s_+ \nabla_{s_-} \in L_1 GK^G(\C,B)$ can be presented as 
a level-one morphism $t_+ \nabla_{t_-} \in  L_1 GK^G(\C,B)$ 
as indicated in 
the diagram
$$\xymatrix{ 
B   \ar[d]^\id   \ar[r] &  J 
\ar[r]  \ar[d]^{e_{11}}   & X
\square_{   s_-  
 } \C    \ar[d]   
 \ar@<.5ex>[r]     
&  \C   \ar[l]^{s_\pm}    \ar[d]    \\
B  \ar[r]  &  M_2(J)  \ar[r] &	M_2 (X^+) 
\square_{   t_-    
} \C  
 \ar@<.5ex>[r]  
  &  \C   \ar[l]^-{t_\pm}      
}$$
%
where  
\begin{eqnarray*}
t_-(\lambda)  &=& 
\big ( s_- \oplus 
q \big) \cdot 
\ad (U_{\pi/2}) \;(\lambda) 
 \;=  \; \lambda  \big(0_{X^+} \oplus 1_{X^+} \big)  
\qquad (\lambda \in \C),     \\
t_+ 
&=&   \big ( s_+ \oplus 
q \big) \cdot 
\ad (U_{\pi/2})  
=
\left ( \begin{matrix} 
p^\bot  s_+  p^\bot  
&  -  p^\bot s_+ p  \\
-p  s_+  p^\bot  &   p s_+ p + q  \\ 
\end{matrix}\right ). 
%
%
\end{eqnarray*}

Observe the point that $t_-$ is so simple. 
(Hence, one may finally replace $X^+$ by $J^+$ in the above diagram.) 

\if 0
(We assume here without loss of generality that
$X := \call_J(J)$ is unital.)  

kann unbedningt unit adjungieren, indem man zu
$\call_J(J \oplus J)$ wechselt
\fi 

\end{proposition}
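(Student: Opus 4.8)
The plan is to realize the passage $s_+\na_{s_-}\rightsquigarrow t_+\na_{t_-}$ as a single morphism of extended double splitexact sequences, the lower one obtained from the upper one by one matrix corner embedding together with an inner rotation that is homotopically trivial. Concretely, set $p:=s_-(1_\C)\in X^+$, which is an idempotent since $s_-$ is a homomorphism out of the unital algebra $\C$, put $q:=p^\bot=1_{X^+}-p$, and form the invertible ``rotation by $\pi/2$''
\[
U_{\pi/2}:=\begin{pmatrix} p^\bot & p\\ -p & p^\bot\end{pmatrix}\in M_2(X^+),\qquad
U_{\pi/2}^{-1}=\begin{pmatrix} p^\bot & -p\\ p & p^\bot\end{pmatrix}.
\]
First I would verify by the $2\times2$ multiplication (using $p^2=p$, $p\,p^\bot=p^\bot p=0$, $p+p^\bot=1$) that $t_\pm:=\ad(U_{\pi/2})\circ(s_\pm\oplus q)$ are exactly the two homomorphisms in the statement: in the ``$-$''-case one has $a=s_-(1)=p$, so all cross terms collapse and $t_-(\lambda)=\lambda(0_{X^+}\oplus 1_{X^+})$, while in the ``$+$''-case $a=s_+(1)\neq p$ survives and produces the claimed matrix. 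That $s_\pm\oplus q$ is a genuine homomorphism $\C\to M_2(X^+)$ is clear because $\mathrm{diag}(s_\pm(1),q)$ is idempotent, and $\ad(U_{\pi/2})$ is an inner automorphism.

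The second step is to check that the lower row really is an equivariant extended double splitexact sequence with ideal $M_2(J)$, auxiliary algebra $M_2(X^+)$, projection onto $\C$, and the two splits $t_\pm$. The only algebraic point is $t_+(\lambda)-t_-(\lambda)\in M_2(J)$, which follows from $t_+(1)-t_-(1)=\ad(U_{\pi/2})\,\mathrm{diag}\big(s_+(1)-s_-(1),0\big)$ together with $s_+(1)-s_-(1)\in J$ and the fact that $\ad(U_{\pi/2})$ normalises the ideal $M_2(J)$. I would then write down the vertical maps of the ladder — $\id_B$, the corner embedding $e_{11}:J\to M_2(J)$ into the upper-left corner, the induced inclusion of the pullback algebras, and $\id_\C$ — and verify that they constitute a morphism of extended double splitexact sequences in the sense of \cite{gk}. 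By the naturality of the level-one assignment (the synthetical pieces $e^{-1}$ and $\Delta_{t_-}$ are forced by the diagram) such a morphism makes the two associated elements \re{eq33} coincide in $GK^G$; the extra summand $q$ is absorbed by stability through $e_{11}$.

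To see that the inner rotation does not change the class I would use homotopy invariance: the path $U_\theta=\left(\begin{smallmatrix}p^\bot+\cos\theta\,p & \sin\theta\,p\\ -\sin\theta\,p & p^\bot+\cos\theta\,p\end{smallmatrix}\right)$, $\theta\in[0,\pi/2]$, consists of invertibles with $U_0=1$ and $U_{\pi/2}$ as above, hence gives a homotopy of homomorphisms from $\id$ to $\ad(U_{\pi/2})$, so the rotation is harmless in $GK^G$. Finally, since $U_{\pi/2}$ and the whole construction live over $J$ and its unitization, one shrinks $X^+$ to $J^+$ by a further corner-embedding argument, yielding the parenthetical remark.

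The main obstacle I expect is $G$-equivariance, which is exactly the subtlety flagged in the introduction. The algebra $X$ carries the two distinct actions $\gamma_\pm$, and $U_{\pi/2}$ is built from $p=s_-(1)$, which is $\gamma_-$-invariant because $s_-$ is equivariant with $\C$ carrying the trivial action; hence $\ad(U_{\pi/2})$ is $\gamma_-$-equivariant and handles the ``$-$''-side cleanly. The real work is to check that the \emph{same} $U_{\pi/2}$ is compatible with the ``$+$''-side, i.e.\ that the lower diagram is genuinely equivariantly double split and that the homotopy $\ad(U_\theta)$ is equivariant once the transport of $\gamma_+$ to $\gamma_-$ via $f_2f_1^{-1}$ and the $M_2$-action $\delta$ of line \re{eq35} are taken into account. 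Verifying that a single rotation simultaneously straightens both splits equivariantly — and not merely $s_-$ — is the heart of the proof.
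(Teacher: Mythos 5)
Your construction is the paper's construction: the same $p=s_-(1_\C)$, the same complementary homomorphism $q(\lambda)=\lambda p^\bot$, the same $v_\pm:=s_\pm\oplus q$, the same rotation path $U_t$, and your matrix computation of $t_\pm=\ad(U_{\pi/2})\circ v_\pm$ is correct. One structural difference: you absorb the extra summand $q$ by a ladder of extended splitexact sequences plus ``stability through $e_{11}$'', whereas the paper does this in one stroke by noting that $q\na_q$ is a trivial zero element (\cite[Remark 3.6.(iii)]{gk}) and applying the addition lemma \cite[Lemma 9.9]{gk}, giving $(s_+\oplus q)\na_{s_-\oplus q}=s_+\na_{s_-}+q\na_q=s_+\na_{s_-}$ directly; your route could probably be made to work via the comparison lemma the paper uses elsewhere, but as written it is vaguer than what it replaces.

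The genuine gap is the step you yourself call ``the heart of the proof'' and then leave open: equivariance of the rotation. You frame it as a verification problem --- check that the single $U_{\pi/2}$ is ``compatible with the $+$-side'' and with the $M_2$-action $\delta$ --- but that verification would fail in general: $p$ is $\gamma_-$-invariant, and there is no reason why $U_t$ should intertwine $\gamma_+$ or $\delta$, so no amount of checking will close the argument along these lines. The paper's resolution is not a verification but a choice of data: the rotation is packaged as an $L_1$-homotopy over $B[0,\pi/2]$ whose $M_2$-action at time $t$ is declared to be the conjugate
$$\Gamma^{(t)} = \Big( \big(\ad(U_t)\oplus\id_\C\big)\otimes\id_{M_2}\Big)\circ\gamma\circ\Big( \big(\ad(U_t^{-1})\oplus\id_\C\big)\otimes\id_{M_2}\Big)$$
of the $M_2$-action $\gamma$ of $v_+\na_{v_-}$. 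With the action transported along the path in this way, each $V_\pm^{(t)}=\ad(U_t)\circ v_\pm$ is tautologically equivariant with respect to the time-$t$ data; since $U_0=1$, the evaluation at $t=0$ is $v_+\na_{v_-}$ with its original action, the evaluation at $t=\pi/2$ is $t_+\na_{t_-}$, and homotopy invariance (the algebraic analogue of \cite[Lemma 8.10]{aspects}) concludes. In other words, the insight you are missing is that the $G$-action itself must be rotated together with the two splits; without it your argument stops exactly where the difficulty begins.
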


\begin{proof}

The proof goes by adding on a trivial element and then performing
a rotation homotopy.  

{\bf (a)}  
\if 0
By the possible equivalent prsentqtion in \cite[Lemma 7.3]{gk} of level-one lements in $GK^G$, we may assume that $X^+ \subseteq  \call_B(\cale \oplus B)$
and $J = \calk_B(\cale \oplus B)$. 
\fi 
\if 0
 We may assuma that $X^+ \subseteq  \call_J(\cale)$ 
(for example $\cale = J \oplus J$) and $J = \calk_J(J)$, see \cite[Lemma 7.3]{gk}.  
AND Adding on 
\fi 
%
Set $p:= s_-(1_\C ) \in X$,  $p^\bot := 1_{X^+} - p \in X^+$, and define $q: \C \rightarrow X^+$ 
by 
$$q(\lambda)= \lambda p^\bot . $$		

%
Consider the trivial zero element $0=q \na_q \in L_1 GK^G(\C,B)$ 
(see \cite[Remark 3.6.(iii)]{gk}),    
where the $G$-action in the range algebra 
$X^+$ of $q$ is chosen to be 
the one of that 
of $s_-$, 
and get 
$v_+ \na_{v_-}  : = 
 (s_+ \oplus q) \na_{ s_- \oplus q}   
=  s_+ \na_{s_-} +  q \na_q  = s_+ \na_{s_-}  $ 
in $GK^G$ by the 
addition lemma \cite[Lemma 9.9]{gk} 
for the 
homomorphisms   
$v_\pm:\C \rightarrow   M_2(X^+)$  
defined by
$v_\pm := s_\pm \oplus q$.

\if 0
Equip 
$M_2(Y)$ 
simply with the canonical $G$-action $\alpha \otimes 1_{M_2}$ 
for $\alpha$ being the given 
one on $Y:=X$ and $Y:=M_2(X)$ (for the `$M_2$-action'), respectively. 
As $q \na_q = 0$ by ...., 
we get
$v_+ \na_{v_-} =
s_+ \na_{s_-} +  q \na_q  = s_+ \na_{s_-}  $ 
in $GK^G$.
\fi 

{\bf (b)} 
In $v_+ \na_{v_-}$ we now want to rotate the projection $p$ from  the first coordinate of $v_-$ into the defect $p$ of $1-p$ 
of the second coordinate of $v_-$,
and to this end define a  homotopy $U:
[0,\pi/2] \rightarrow M_2(X^+)$   
of invertible operators 
by 
\be{rotu}
U_t := \left ( \begin{matrix} 
\cos(t) p  
&  \sin(t) p  \\
-\sin(t) p &  \cos(t) p   \\  
\end{matrix}\right )
+ 
\left ( \begin{matrix} 
p^\bot
&  0  \\
0 &  p^\bot   \\  
\end{matrix}\right )  .		
\en

{\bf (c)} 
More precisely we define a homotopy in $L_1$ by  
$$\xymatrix{ 
B[0,\pi/2]      \ar[r] &  M_2(J)[0,\pi/2] \ar[r]     
& \Big ( M_2(X^+)[0,\pi/2]  \Big )      
\square_{V_- } \C      \ar@<.5ex>[rr] 
   &&  \C   \ar[ll]^-{V_\pm}      
}   , $$
where $V_\pm : \C \rightarrow M_2(X^+)[0,\pi/2]$ is given by
the family of maps  
$V_\pm^{(t)} =  
\ad (U_t) \circ v_\pm : \C \rightarrow M_2(X^+)$ 
for $t \in [0,\pi/2]$. 

If $\gamma: G \rightarrow \aut \Big [  M_2 \Big ( M_2( X^+)  
\square_{v_- } \C \Big )  \Big ]$ is the $M_2$-action 
of $v_+ \na_{v_-}$, then the $M_2$-action  
of the $L_1$-homotopy is set to be
$\Gamma: G \rightarrow \aut  \Big [  M_2 \Big ( \Big ( M_2( X^+)[0,\pi/2] \Big ) 
\square_{V_- } \C  \Big ) \Big ]$ with point of time actions 
($t \in [0,\pi/2])$)
$$\Gamma^{(t)} = 
\Big (  \big  ( \ad (U_t) \oplus \id_\C \big )  
\otimes \id_{M_2} \Big )
\circ \gamma \circ  
\Big (  \big (\ad(U_t^{-1}) \oplus \id_\C  \big ) 
\otimes \id_{M_2} \Big )   .
$$

{\bf (d)} 
Then we finish the proof by applying this homotopy, that is,  
by applying 
the algebraic analogy of \cite[Lemma 8.10]{aspects} 
we obtain 
$$v_+ \na_{v_-} = (\ad(U_0) \circ v_+  ) \na_{  \ad(U_0) \circ v_-}
= ( \ad(U_{\pi/2}) \circ v_+  ) \na_{\ad(U_{\pi/2})   \circ v_-} 
=: t_+ \na_{t_-}  , $$
where $t_\pm$ 
are obviously defined. 
 %
%
\if 0
We equip the range object of $v_\pm \cdot \ad (U_t)$ with the $G$-action 
$\ad(U_t^{-1}) \cdot \gamma_\pm$
for the $G$-actions $\gamma_\pm$
for $v_\pm$.
\fi 
\end{proof}

\if 0
We note that by the last proposition, the 
standard form 
$K$-theory elements 
of $L_1 GK(\C,A)$ for algebras $A$ have the same bauplan of the the elements 
of $K(A)$ as defined by Phillips \cite{phillips} 
for Frechet algebras $A$, 
as well as those of $kk(\C,A)$ in Cuntz  by 
\cite[Theorem ]{cuntz}.  
\fi 


\section{$K$-theory in special $GK^G$-theory}

\label{sec3}

\if 0 
\begin{lemma}

Let $e: A \rightarrow M_n(A)$ corer embedding

(i)
Then range has form $\calk_A( (A^n, 
S) )$

$A^n$ is most left column of $M_n(A)$

(ii)
If $A= \call_A(A)$ then  $G$-action exactly $\ad(S)$

provided $\ad(\alpha)$ in uper left corner

(iii)

auch für subalgebra 

\end{lemma}

\begin{proof}

(i)

lET $\delta$ action on $M_n(A)$

let $A^n \subseteq M_n(A)$ left column

then $A^n$ $A$-module, $\delta$-invariant, $\delta$ is module action

under multiplication inn $M_n(A)$

$$\delta (a_{i1})=\delta (a_{i1} b_{11}) 
= \delta (a_{i1})  \delta(b_{11}) \in A^n$$

aktion:

$b \in M_n(A)$, $a \in A^n$ 

betrachte $\phi: M_n(A) \rightarrow \calk_A(A)$, $\phi(b) a := b \cdot a$

$$\phi(\delta(b)) a = \delta(b) a = \delta(b) \cdot \delta(\delta^{-1}(a)) 
= \delta( b \cdot \delta^{-1}(a))
= \ad(\delta)(\phi(b)) a$$

(ii)

let $\Gamma$ action on $M_n(\call_A(A))$ 

$\Gamma(A^n) \subseteq A^n$: 

$\Gamma(a_{i1})= \Gamma(a_{i1} b_{11}) 
= \Gamma(a_{i1}) \Gamma( b_{11}) \in A^n$

da nacj vorr $\Gamma( b_{11}) \in A_{11}$ 

---

$\Gamma(T)$

---

schränke ein aktion auf $\call_A(A)$ auf $A^n$

reicht als iso $\calk_{\call_A(A)}(\call_A(A)^n) \cong M_n(\call_A(A)) \rightarrow \call_A(A^n)$

bezeichne letztere abb mit $\psi$

dann $\psi$ equivariant wie $\phi$ oben 

---

(iii)

die ganze rechnung (ii) scheint auch für subalgebra von $Mn(L(A))$ zu funktionieren

\end{proof}
\fi

In this section we exploit the simple standard form of Proposition 
\ref{pstand} to multiply such level-one $K$-theory elements 
with any other level-one element in $GK^G$-theory 
in 
Proposition \ref{prop02}. 
However, up to now we can 
achieve 
this only for ordinary corner 
embeddings  
$e : B \rightarrow J=M_n(B)$ with special $G$-action, 
and 
by  
restricting the $G$-action further  
and axiomatically declaring only 
those corner embeddings  to be 
invertible 
we get a descent
(very special) $GK^G$-theory, also 
incorporating the validity of a Green-Julg theorem. 

 For 
	isomorphisms $M_n(A) \cong \calk_A(A^n)$ 
	and $M_n(\call_A(A)) \cong \call_A(A^n)$  
see 
\cite[Lemmas 2.22 and 2.23]{gk}.  
Recall from Subsection \ref{sec22} that 
$\calk_A(A^n)  \subseteq \call_A(A^n)$.

\if 0
In this section we exploit the simple standard form of Proposition 
\ref{pstand} to multiply such level-one $K$-theory elements 
with any other level-one element in $GK^G$-theory. 
A major problem however is that we cannot multiply the inverse of general 
corner emebddings $e^{-1}$ as defined in Subsection 
with level-one elements. Thus we need to restrict us to ordinary corner emebddings 
$e : B \rightarrow J=M_n(B)$ in Prorposiiotn \ref{pstand}. 
Thing get worse as such a restriction impairs also  the 
freedom of choice of a $G$-action on $J$, and thus we end up with considering only 
special corner emebedings, defined next, dictated by \cite{gk} 
which essentially shows that the inverse of special corner emebddings 
can be fused with $L_1$-elements.  
For even 
more restriction on the $G$-action 
then to be called very special corner embeddings, we however get a descent
theory $GK^G$-theory incoorporating the validity of a Green-Julg theorem
for the rest of the paper. 

\fi

\begin{definition}		\label{def31}

{\rm 

\if 0
A corner embedding is {\em special} 
if it is of the 
 ususal form $e : (A,\alpha) \rightarrow \calk_{(A,\alpha)}
\big ( (A,\alpha) \oplus 
(A^{n-1}, S)  \big  ) \cong (M_n(A),\delta)$ ($1 \le n \le \infty$)
  with $e(a) = a \oplus 0 \oplus 0 \oplus \cdots$ and 
  can be extended to a usual equivariant corner embedding 
  $e^+: (A^+,\alpha^+) \rightarrow 
	\calk_{(A^+,\alpha^+)}( (A^+,\alpha^+) \oplus 
	((A^+)^{n-1}, T)  ) \cong (M_n(A^+),\delta_2)$ 
such that $\delta_2$ extends $\delta$. 
\fi 

A corner embedding is {\em special} 
if it is of the 
 usual form $e : (A,\alpha) \rightarrow (M_n(A),\delta)$ ($1 \le n \le \infty$)
  with $e(a) = a \oplus 0 \oplus 0 \oplus \cdots$ and 
  can be extended to a usual equivariant corner embedding 
  $e^+: (A^+,\alpha^+) \rightarrow (M_n(A^+), \delta_2)$ 
for a 
$G$-action $\delta_2$ extending $\delta$.  
%
  It is {\em very special} if it is even of the form 
  $e : (A,\alpha) \rightarrow (M_n(\C),\gamma) \otimes (A,\alpha)$ 
  ($1 \le n \le \infty$). 

}
\end{definition}


\if 0
For the next discussion of the last definition 
let us state a useful lemma before. 
\fi

\begin{lemma}			\label{lem31} 

Let $(A,\alpha)$ be a $G$-algebra. 
Without any $G$-actions, let $B$ be an algebra between $M_n(A)$ and $M_n(\call_A(A))$. 
Assume now $(B,\Gamma)$ is a $G$-algebra such that its upper left corner 
embedding 
$(A,\alpha) \rightarrow (B,\Gamma)$ is $G$-invariant.  
Then:

\if 0
Assume now $(B,\Gamma)$ is a $G$-algebra such that its upper left corner $A$ is $G$-invariant, that is  $\Gamma(A) \subseteq A$. Then:
\fi 

{\rm (i)}  
There is a $G$-module action $\gamma$ on the $(A,\alpha)$-module $A^n$ such that
$$(M_n(A),\Gamma|_{M_n(A)}) \cong \calk_{(A,\alpha)} ((A^n,\gamma))  \subseteq (B,\Gamma)    $$ 
is a $G$-invariant subalgebra. 

{\rm (ii)} $(B,\Gamma) \subseteq \call_{(A,\alpha)}((A^n,\gamma))$ is
a $G$-invariant subalgebra. 

{\rm (iii)} The $G$-action $\Gamma$ on $B$ is uniquely 
determined by its restriction 
to $M_n(A)$ (and even just the first column of it).



\end{lemma}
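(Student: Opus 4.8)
The plan is to recognize all three claims as consequences of one structural fact: the relevant subspaces are generated, as modules or ideals inside $B$, by the $G$-invariant corner $A e_{11}$, where $e_{11}$ is the matrix unit in position $(1,1)$ and $A e_{11} \subseteq B$ is the image of the given corner embedding, so that by hypothesis $\Gamma_g(a e_{11}) = \alpha_g(a)\, e_{11}$ for all $g \in G$, $a \in A$. First I would identify the first column $A^n \subseteq M_n(A) \subseteq B$ (matrices supported in the first column with entries in $A$) with $B \cdot (A e_{11})$, and the full $M_n(A)$ with the two-sided ideal $B \cdot (A e_{11}) \cdot B$ generated by the corner. Both identities use that $A$ is quadratik (whence $A^2 = A$) together with the facts from \cite[Lemmas 2.22 and 2.23]{gk} that $M_n(A) \cong \calk_A(A^n)$ and $M_n(\call_A(A)) \cong \call_A(A^n)$, and that $\calk_A(A) \cong A$ sits as a two-sided ideal in $\call_A(A)$; the ideal property is exactly what keeps entrywise products $\call_A(A)\cdot A$ and $A\cdot\call_A(A)$ inside $A$. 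Since $\Gamma_g$ is an algebra automorphism of $B$ fixing the generating set $A e_{11}$, it automatically preserves the module $B (A e_{11}) = A^n$ and the ideal $B (A e_{11}) B = M_n(A)$; this justifies the notation $\Gamma|_{M_n(A)}$ and lets me define $\gamma := \Gamma|_{A^n}$. That $\gamma$ is a $G$-equivariant module action is immediate from $\Gamma_g(\xi \cdot a e_{11}) = \Gamma_g(\xi)\,\Gamma_g(a e_{11}) = \gamma_g(\xi)\,\alpha_g(a)\, e_{11}$.

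The isomorphism statements (i) and (ii) then reduce to a single equivariance computation for the canonical left-multiplication map $\psi \colon B \to \call_A(A^n)$, $\psi(b)(\xi) = b\cdot \xi$, which restricts to the isomorphism $\phi \colon M_n(A) \to \calk_A(A^n)$. I would verify $\psi \circ \Gamma_g = \ad(\gamma)_g \circ \psi$ directly: for $b \in B$ and $\xi \in A^n$,
$$ \ad(\gamma)_g(\psi(b))(\xi) = \gamma_g\big(\psi(b)(\gamma_g^{-1}\xi)\big) = \gamma_g\big(b\cdot \gamma_g^{-1}\xi\big) = \Gamma_g\big(b\cdot \gamma_g^{-1}\xi\big) = \Gamma_g(b)\cdot \xi = \psi(\Gamma_g(b))(\xi), $$
where the middle equality uses $\gamma = \Gamma|_{A^n}$ applied to the element $b\cdot \gamma_g^{-1}\xi \in A^n$, the product $b\cdot \gamma_g^{-1}\xi$ being a genuine product in $B$ that lands in the first column by the ideal property. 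Taking $b \in M_n(A)$ yields the equivariant isomorphism $(M_n(A),\Gamma|_{M_n(A)}) \cong \calk_{(A,\alpha)}((A^n,\gamma))$ of (i); taking general $b \in B$ and noting that $\psi$ is the restriction of the non-equivariant isomorphism $M_n(\call_A(A)) \cong \call_A(A^n)$, hence injective, yields the equivariant inclusion $(B,\Gamma) \subseteq \call_{(A,\alpha)}((A^n,\gamma))$ of (ii).

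Finally, (iii) falls out of the same formula. Since $\psi$ is injective, $\Gamma_g(b)$ is the unique element with $\psi(\Gamma_g(b)) = \ad(\gamma)_g(\psi(b))$, and $\ad(\gamma)_g$ depends only on $\gamma = \Gamma|_{A^n}$; thus $\Gamma$ is recovered from its restriction to the first column through the identity $\Gamma_g(b)\cdot \xi = \gamma_g(b\cdot \gamma_g^{-1}\xi)$, and a fortiori it is determined by $\Gamma|_{M_n(A)}$. The main obstacle I anticipate is purely the bookkeeping at the generation step: establishing the two identities $A^n = B(A e_{11})$ and $M_n(A) = B(A e_{11})B$ rigorously, with correct repeated use of quadratik-ness and of the ideal property $\calk_A(A) \trianglelefteq \call_A(A)$ to guarantee that entrywise products remain in $A$, and checking that everything survives the case $n=\infty$ (where $A^\infty$ is read as the standard module and $M_\infty = \bigcup_n M_n$). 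Once those identities are in place, the $\Gamma$-invariance of the relevant subspaces and the two equivariance verifications above are routine.
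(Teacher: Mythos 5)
Your proof is correct and takes essentially the same route as the paper's: you restrict $\Gamma$ to the first column to define $\gamma$, use quadratik-ness of $A$ together with the $G$-invariant corner to establish invariance of $A^n$ and of $M_n(A)$, and then verify that the left-multiplication maps into $\calk_{(A,\alpha)}((A^n,\gamma))$ and $\call_{(A,\alpha)}((A^n,\gamma))$ intertwine $\Gamma$ with $\ad(\gamma)$, from which (iii) follows by injectivity exactly as in the paper. Your packaging of the invariance step via the generation identities $A^n = B\,(Ae_{11})$ and $M_n(A) = B\,(Ae_{11})\,B$ is only a cosmetic variant of the paper's element-wise computation with products such as $(a_{i1}\otimes e_{i1})(b_{11}\otimes e_{11})$, resting on the same two ingredients (quadratik-ness and the ideal property keeping entrywise products in $A$).
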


\begin{proof}

Identify $A^n \subseteq M_n(A)$ as the first, left-most column, and $(A,\alpha) \subseteq
M_n(A)$ as the upper left corner. 
Then $A^n$ is the ordinary $A$-module by performing multiplication in $M_n(A)$ 
under the last identifications.  
We equip $A^n$ with the $G$-action $\gamma$ defined to be $\Gamma$ restricted to $A^n$. Indeed, 
using that $A$ is 
quadratik, see Subsection \ref{sec21},  
the 
$G$-invariance of $A^n$  follows from
\begin{eqnarray*}
\Gamma(a_{i1}  b_{11} \otimes e_{i1} )  & = &  
\Gamma \big ((a_{i1} \otimes e_{i1}) (b_{11} \otimes  
e_{11}  \big )
= \Gamma( a_{i1} \otimes e_{i1} )  \Gamma( b_{11} \otimes e_{11} ) \\
	&=&  \Gamma( a_{i1} \otimes e_{i1} )  \big (\alpha( b_{11}) \otimes e_{11}  \big ) 
\subseteq A^n   .   
\end{eqnarray*} 

This computation also shows that $\gamma$ is a $G$-module action for $(A^n,\gamma)$ over $(A,\alpha)$.

The analogous computation shows that the first upper line $L \cong A^n \subseteq M_n(A)$ 
is 
$\Gamma$-invariant. 
As every element of $M_n(A)$ can be written as a sum of products of the form
$x y \in M_n(A)$ with $x \in A^n, y \in L$, and $\Gamma(x y) = \Gamma(x) \Gamma(y) 
\subseteq  A^n \cdot L \subseteq M_n(A)$, $M_n(A)$ is $\Gamma$-invariant. 

We define the $G$-equivariant map $\phi: (M_n(A),\Gamma) \rightarrow \calk_{(A,\alpha)}
((A^n,\gamma))$ by matrix-vector multiplication $\phi(a)(b)= a \cdot b \in A^n$ 
($a \in M_n(A), b \in A^n$)  
to be performed as the ordinary product in $M_n(A)$, where $A^n \subseteq M_n(A)$ 
is identified as above. 
Then 
$$\phi \big (\Gamma(a) \big ) (b) = \Gamma(a) \cdot b = \Gamma(a) \cdot \Gamma \big (\Gamma^{-1}(b) \big ) 
= \gamma \big ( a \cdot \gamma^{-1}(b) \big )
= \big (\ad(\gamma)(\phi(a) \big ) 
 (b)  ,$$
which shows $G$-equivariance of $\phi$.

(ii)
Analogously as $\phi$, we define $\psi:(B,\Gamma) \subseteq M_n(\call_A(A))
\rightarrow \call_{(A,\alpha)} ((A^n,\gamma))$ 
by matrix-vector multiplication 
$\psi(a)(b)= a \cdot b \in A^n$ 
($a \in M_n(\call_A(A)), b \in A^n \subseteq \call_A(A)^n$)   performed in  $M_n(\call_A(A))$, 
in which $A^n$  
is identified as the first column again. 

By the same computation as for $\phi$, we see that $\psi$ is $G$-equivariant. 

(iii) 
We have seen in (i) that $\gamma$ is just the restriction of $\Gamma$ to the 
first column of $M_n(A)$. But reversely it determines $\Gamma$ completely 
by (ii). 
%
 %
%
%
%
\end{proof}

Lemma \ref{lem31}.(i) applied to $M_n(A)$ and $M_n(A^+)$ tells us that 
the special corner embeddings $e$ and $e^+$ of 
Definition \ref{def31} are indeed valid, invertible  
corner embeddings in the sense of Subsection \ref{sec22}, 
and Lemma \ref{lem31}.(ii) applied to $M_n(A)$ that  
the special requirement of $M_n(A)$ is that the subalgebra $M_n(A^+)
\subseteq M_n(\call_A(A))$ is $G$-invariant and Lemma \ref{lem31}.(iii) 
that its $G$-action is then unique. 

\if 0   
\begin{definition}

{\rm 
Let {\em special} $GK^G$-theory that one 
where only those invertible corner embeddings are allowed, which
are of the ususal form $e : (A,\alpha) \rightarrow (M_n(A),\delta)$ ($1 \le n \le \infty$)
  with $e(a) = a \oplus 0 \oplus 0 \oplus \cdots$ and
 $\delta$ can be extended to a $G$-action 
  on $M_n(A^+)$.  
  
\if 0
  $\delta$ can be extended to 
  the $G$-action 
  $\delta \oplus \id_{M_n}$  
  on the algebra $M_n(A^+) \cong M_n(A) \oplus M_n(\C)$ 
  (linear isomorphism). 
 \fi
}
\end{definition}
\fi

\if 0
Recall that any $G$-action on $M_n(A) \cong \calk_{(A,\alpha)}((A^n,\gamma))$ can be extended to one on $M_n(\call_A(A)) \cong (\call_{(A,\alpha)}(A^n), \ad(\gamma))$, so the special requirement is that the subalgebra $M_n(A^+)$ is invariant under it. 
For a $M_2$-action on $M_2(A)$ this is the case if and only
if 
the 
restricted 
$G$-action $\gamma$ on the left bottom corner space 
of its extension to $M_2(\call_A(A))$ 
satisfis  
$\gamma(1_A) \in A^+$ 
by some analysis in \cite[Lemma ....]{gk}.
\fi 

\if 0
This is the case if and only
if $\gamma(1_A) \in A^+$ for 
each single entry modul $G$-actions $\gamma: G \rightarrow \call_A(A)$ of the matrix 
by some analysis in \cite[Lemma ....]{gk}.
In particular, 
this always holds if $A$ is unital. 
\fi

If $A$ is unital, then $e$ is always special because $M_n(A^+) \cong M_n(A) \oplus M_n(\C)$ can be extended with the trivial action on the second summand.  

$GK^G$-theory is called {\em very special} if only very special 
corner 
embeddings are axiomatically declared to be invertible 
in Subsection \ref{subsec21}.(d). 
%
An $M_2$-space $M_2(M)$ can then only be trivial in the sense 
that 
its $G$-action  is 
of the form $\delta = 
\gamma \otimes \id_{M_2}$ for 
$(M,\gamma) = (M_-,\gamma_-) = (M_+,\gamma_+)$ 
by Subsection \ref{subsec22}.(d),  
and thus 
the corner 
embeddings $f_1,f_2$ in Subsection \ref{subsec22} can be omitted as anyway $f_2 \cdot f_1^{-1} = \id_{M}$ in $GK^G$ 
by an ordinary rotation homotopy.

%
Important invertible corner embeddings in very special $GK^G$-theory
include those of the form $e:A \rightarrow 
%
\big (\entd_{\C}(X),\ad(\gamma) \big ) \otimes (A,\alpha)$, where $X$ is a vector space equipped with a $\C$-linear $G$-action $\gamma$, 
such that $\entd_{\C}(X) \cong M_n(\C)$. 
For $G$ a finite group and $X:= \ell^2(G)$ this is the corner 
embedding 
(the averaging map $1 \otimes f$ of \cite[Lemma 16.2]{gk}) 
sufficient for the proof of the Green-Julg theorem, so that
we 
note:

\begin{lemma}		\label{lemma31} 

The Green-Julg theorem 
\cite[Corollary 16.11]{gk} for finite discrete groups $G$ holds in very special $GK^G$-theory. 

\end{lemma}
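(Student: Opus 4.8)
The plan is to show that the proof of the Green--Julg theorem given in \cite[Corollary 16.11]{gk} only ever invokes invertible corner embeddings that are in fact \emph{very special} in the sense of Definition \ref{def31}, so that the entire argument survives verbatim once we restrict the class of axiomatically invertible corner embeddings down to the very special ones. Concretely, I would first trace through the construction of the Green--Julg map and its inverse as presented in \cite{gk}, isolating every place where a corner embedding is declared invertible. The remark preceding this lemma already flags the crucial ingredient: for $G$ finite and $X := \ell^2(G)$, the averaging map appears as the corner embedding $e : A \rightarrow \big(\entd_{\C}(\ell^2(G)),\ad(\gamma)\big) \otimes (A,\alpha)$, which is of the form $e : (A,\alpha) \rightarrow (M_n(\C),\gamma) \otimes (A,\alpha)$ with $n = |G|$ after choosing $\entd_{\C}(\ell^2(G)) \cong M_{|G|}(\C)$. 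This is \emph{precisely} the very special form, so the key geometric input of the Green--Julg theorem is admissible in very special $GK^G$-theory.

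Next I would confirm that no other, genuinely non--very--special corner embeddings enter the proof. The natural worry is that intermediate steps --- for instance the comparison isomorphisms, Morita-type identifications, or the verification that the two composites are identities --- might secretly use an inverse corner embedding $e^{-1}$ whose source module $\cale$ is not of the flat form $M_n(\C) \otimes A$. Here I would lean on the structural results assembled in this section: Lemma \ref{lem31} guarantees that a special corner embedding into $(M_n(A),\delta)$ extends to a genuine invertible corner embedding with a uniquely determined $G$-action, and the discussion following it shows that under the very special restriction the $M_2$-action is forced to be trivial of the form $\gamma \otimes \id_{M_2}$, so that the auxiliary corner embeddings $f_1,f_2$ collapse to the identity by an ordinary rotation homotopy. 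This means that every level-one morphism appearing in the Green--Julg argument can be taken with trivial $M_2$-action, removing exactly the obstructions (the non--skippable inverse corner embeddings of \cite[Lemma 8.3]{gk}) that were flagged in the introduction as breaking down outside very special theory.

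The main obstacle I anticipate is \emph{bookkeeping rather than conceptual}: the original proof in \cite{gk} was written with the full class of generalized `unprojective' invertible corner embeddings available, and one must verify that at no hidden point --- particularly in any appeal to generalized Morita equivalence \cite[Theorem 14.2]{gk}, which is explicitly \emph{not} available here by remark (c) of the introduction --- does the argument silently rely on inverting a corner embedding that fails to be very special. If such a point exists, I would need to substitute a replacement argument built only from very special embeddings and trivial-$M_2$-action homotopies. My expectation, however, is that the averaging corner embedding for $\ell^2(G)$ is the \emph{only} non-homomorphism generator the Green--Julg proof requires, and since it is manifestly very special, the theorem transfers with only notational changes. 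I would therefore conclude by stating that, the Green--Julg map and its inverse being built entirely from $G$-equivariant homomorphisms together with the very special averaging corner embedding and its inverse, the identities establishing the isomorphism hold already in very special $GK^G$-theory, giving the claim.
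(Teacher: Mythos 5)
Your proposal is correct and takes essentially the same route as the paper: the paper's entire justification is the remark immediately preceding the lemma, which observes, exactly as you do, that the averaging corner embedding $1\otimes f$ of \cite[Lemma 16.2]{gk} with $X=\ell^2(G)$ and $\entd_{\C}(\ell^2(G))\cong M_{|G|}(\C)$ is of the very special form and is the corner embedding sufficient for the Green--Julg proof. Your extra bookkeeping step (checking that no hidden non--very--special inverse corner embeddings enter) is more caution than the paper itself records, but the core argument coincides.
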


\if 0
Unfortunately we have slight flaw in ... in \cite{gk}. 
But this lemma is otherwise only used in corollary .. , so that 
flaw barely effects \cite{gk}.

{\em For the rest of this note we 
suppose only the setting 
special $GK^G$-theory!}
\fi

We can now fuse standard form $K$-theory $L_1$-elements with any 
level-one     
elements to $L_1$-elements   
in the following 
proposition.  

Its proof relies also on a 
series of applications of elementary results 
from \cite{gk}, but taking care about the involved $G$-actions
makes it somewhat 
lengthy   
(including the necessity of Lemma \ref{lem31}).  
 For very special $GK^G$-theory 
the proof is much simpler, see the 
last paragraph of the proof.   

\begin{proposition}[Fusion of 
standard 
$K$-theory element with 
general $GK$-theory element]

\label{prop02}

A given level one { standard} $K$-theory element $p_+ \na_{p_-} 
\in L_1 GK^G(\C,A)$ 
(represented as the right most edge of the diagram below)   
can be multiplied with any given level one $GK$-theory element
$s_+ \na_{s_-} \in  L_1 GK^G(A,B)$ 
(represented as the 
top edge of the diagram below) to a level one $K$-theory element 
\be{eq32b}
t_+ \na_{t_-}   
= p_+ \na_{p_-}  \cdot s_+ \na_{s_-} 
\en
in $L_1  GK^G(\C,B)$ 
(represented as the bottom edge of the diagram below) 
in $GK^G$-theory 
as indicated in the diagram
$$\xymatrix{
B  \ar[r]^e  \ar[d] & J \ar[r] 
\ar[d]  
& X \square_{s_-}  
A   \ar[d]  
\ar@<.5ex>[r]
& A \ar[l]^{s_\pm 
} \ar[d]^e    \\
B  \ar[r]^h     
\ar[d]  & M_n(M_m(J)) \ar[r]^{j_1} 
\ar[d] & M_n(M_m(X)) \square_{ v_- }   
E     \ar[d]    
\ar@<.5ex>[r]    
& M_n(M_m(A)) \ar[l]^{v_\pm 
 }    
\ar[d]_{i}  \\  
B  \ar[r] 
\ar[d] 
&   
    M_{2n  m}(J) \ar[r]^{j_2}  \ar[d]  & M_{2n}(M_m(X)^+) \square_{u_-}   
D_-   
\ar@<.5ex>[r] 
\ar[d]   
 &   
M_{n}(M_m(A)^+) \square_{p_-} \C   
  \ar[l]^{ u_\pm   
  }   
  \ar@<-.5ex>[u]_{\Delta_{p_-}}  
  \ar[d]_{f_1}
  \\
B  \ar[r] 
\ar[d] 
& 
	M_{4n m } (J)  \ar[r]  \ar[d]  & M_{4n}(M_m(X)^+) \square_{z_-}   
M_2(D)     
\ar@<.5ex>[r]   
 &   
      M_2(D)    
  \ar[l]^{ z_\pm   
  }  \\
B  \ar[r] 
\ar[d] 
& 
	M_{2n m }( J) \ar[r]  \ar[d]  & M_{2n}(M_m(X)^+) \square_{y_-}   
D_+   
\ar@<.5ex>[r]   \ar[u]   
 &   
M_n(M_m(A)^+) \square_{p_+} \C   
  \ar[l]^{ y_\pm   
     }  
  \ar[d]_{g}  
  \ar[u]_{f_2}
  \\
  B  \ar[r] 
  & 
	M_{2n m}( J)  \ar[r] &  M_{2n}(M_m(X)^+) \square_{t_-}   
 \C   
\ar@<.5ex>[r]  
		\ar[u]    
 &   
\C  \ar@<-.5ex>[u]_{p_\pm}   
\ar[l]^-{ t_\pm:=
  \hat s_\pm   \circ p_+  \oplus \hat s_\mp \circ p_- }	   
}$$
provided  
the $M_2$-action of the first 
line and all corner 
embeddings  of the last most right row of the diagram  
are special,   
and the matrix sizes $m$ and $n$ are finite.  
Also, 
$e$ is assumed to be 
the obvious composition of two 
special corner embeddings.  

\if 0
Standard 
of $p_+ \na_{p_-}$ 
means here that the image of $p_-$ is supposed to be 
in
the matrix  $M_n(\C) \subseteq M_n(M_m(A)^+)$ of units  of $M_m(A)^+$.    
\fi  
Here, 
for any 
homomorphism $s : A \rightarrow X$  we 
define $\hat s : M_n(M_m(A)^+) \rightarrow  M_n(M_m(X)^+)$ by the formula 
\be{eq36}
\hat s:=   ( s \otimes \id_m)^+ \otimes \id_n   ,
\en
which does not determine $\hat s$ in $GK^G$ completely as we use this 
map with 
various $G$-actions in its domain and range.  

Moreover, all $M_2$-actions in the diagram remain special.
In very special $GK$-theory, the whole diagram  
and computation stays within that theory, and infinite matrix sizes $m$ and
$n$ are allowed.

\if 0
The formula given in the last line of the above diagram is only correct
in very special $GK^G$-theory
when 
$p_-$ 
maps into 
the matrix  $M_n(\C) \subseteq M_n(M_m(A)^+)$ of units  of $M_m(A)^+$ 
(for example when coming from the standard form of 
Proposition \ref{pstand}), 
but in general it is 
\be{eq127}
t_\pm := \hat s_\pm \circ p_+ \oplus \hat s_\mp \circ p_-     . 
\en 
\fi

\end{proposition}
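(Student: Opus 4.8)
The plan is to verify that the displayed six-row ladder commutes in $GK^G$, so that the formal composition of the two level-one morphisms collapses to the single level-one morphism $t_+ \na_{t_-}$ read off from its bottom row. The engine of the argument is the naturality of the extended double splitexact sequence construction of Subsection \ref{subsec22}: whenever two such sequences are joined by a commuting ladder of equivariant homomorphisms and (invertible) corner embeddings, their associated elements \re{eq33} coincide in $GK^G$. Accordingly, it suffices to exhibit the intermediate rows, to check that each pair of consecutive rows together with the vertical maps is a morphism of extended double splitexact sequences, and finally to identify the top-right data with the product $p_+ \na_{p_-} \cdot s_+ \na_{s_-}$ and the bottom row with $t_+ \na_{t_-}$.

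First I would matricize the general element $s_+ \na_{s_-}$. Applying the special corner embedding $e : A \to M_n(M_m(A))$ coming from $p_+ \na_{p_-}$ to the entire top sequence tensors everything up to matrices; this is the passage from the first to the second row, where $s_\pm$ is amplified to the map $v_\pm$ built from the matricized homomorphism $\hat s$ of \re{eq36}, and the ideal $J$, auxiliary algebra $X$, and base $A$ are replaced by their matrix amplifications. The $G$-equivariance of $\hat s$, for the differing domain and range $G$-actions, is exactly what Lemma \ref{lem31} secures: it identifies the matrix algebras with the compact, respectively adjointable, operators carrying the induced module actions, and pins the matrix action down uniquely from its restriction to the first column. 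Specialness of $e$ guarantees that it remains invertible and that the resulting $M_2$-actions are again special.

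Next I would fold in the standard $K$-theory element $p_+ \na_{p_-}$. Because it is in the standard form of Proposition \ref{pstand}, its negative leg $p_-$ lands in the matrix units $M_n(\C) \subseteq M_n(M_m(A)^+)$ and is therefore rigid enough that its synthetical split $\Delta_{p_-}$ and the two attached splitexactness relations can be glued onto the matricized sequence. Rows three through five realize this gluing: one passes to $M_2(D)$, rotates the legs $z_\pm$ into $y_\pm$ by the corner embeddings $f_1, f_2$ (which are joined by an ordinary rotation homotopy since the $M_2$-action is special), and inserts $\Delta_{p_-}$ exactly as in the presentation of a synthetical split in Lemma \ref{lem1}. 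At each rung one verifies that the vertical maps $i, f_1, \Delta_{p_-}, f_2, g$ assemble into a morphism of extended double splitexact sequences, so that the equality of $\na$-elements propagates down the ladder to the bottom row, whose legs are forced to be $t_\pm = \hat s_\pm \circ p_+ \oplus \hat s_\mp \circ p_-$; the crossed indices $\pm, \mp$ arise from the rotation interchanging the roles of the $\gamma_+$ and $\gamma_-$ actions.

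The main obstacle throughout is not the combinatorics of the homomorphisms but the control of the $G$-actions. Every amplification introduces a new $M_2$-action and a new module action on the matrix columns, and at each rung one must check both that these remain special, so that the corner embeddings stay axiomatically invertible, and that $\hat s$ is equivariant for the particular pair of actions then in play; this is precisely where Lemma \ref{lem31}.(i)--(iii) is indispensable. In very special $GK^G$-theory the $M_2$-actions are automatically of the product form $\gamma \otimes \id_{M_2}$, the rotation $f_2 f_1^{-1}$ is trivial, and the whole bookkeeping collapses, which is what also permits infinite matrix sizes $m$ and $n$.
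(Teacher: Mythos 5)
Your global strategy --- peeling off the factors of $p_+ \na_{p_-} = P_+ f_2 f_1^{-1} \Delta_{p_-} e^{-1}$ one at a time so that each row of the diagram is the previous row composed with one more generator, with Lemma \ref{lem31} controlling the $G$-actions and specialness --- is indeed the skeleton of the paper's proof. But there is a genuine gap at the decisive rung: the passage from the second to the third row, i.e.\ composition with the synthetic split $\Delta_{p_-}$. Your proposed engine, that ``each pair of consecutive rows together with the vertical maps is a morphism of extended double splitexact sequences, so equality propagates down the ladder,'' is a naturality statement that is only available when the vertical maps are honest homomorphisms (this is \cite[Lemma 7.2]{gk}) or, with substantial extra work including the unitization detour through $s_\pm^+$, inverse corner embeddings (\cite[Proposition 9.7]{gk}). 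The synthetic morphism $\Delta_{p_-}$ has no formula as a map: a square containing it cannot be ``verified to commute,'' and inserting its level-one presentation from Lemma \ref{lem1} would be circular, since computing that very product is what Proposition \ref{prop02} is for. Its only handle is the defining relation $1_{D_-} = \Delta_{P_-} i + g P_-$, and the paper's proof of this step is correspondingly an algebraic manipulation, not a ladder argument: it first manufactures the candidate $u_+ \na_{u_-} := (1_{D_-} - g P_-)\, w_+ \na_{w_-}$ of formula \re{eq118} (which already needs the addition lemma, the negation corollary and homomorphism fusion from \cite{gk}), then observes that the second row factors as $v_+ \na_{v_-} = i \cdot u_+ \na_{u_-}$, and concludes $\Delta_{P_-} v_+ \na_{v_-} = \Delta_{P_-} i\, u_+ \na_{u_-} = (1_{D_-} - g P_-)\, u_+ \na_{u_-} = u_+ \na_{u_-}$. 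None of this mechanism appears in your sketch, and without it the proof does not go through.

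Two further assertions are wrong rather than merely missing. First, you attribute the crossed indices in $t_\pm = \hat s_\pm \circ p_+ \oplus \hat s_\mp \circ p_-$ to ``the rotation interchanging the roles of the $\gamma_+$ and $\gamma_-$ actions''; in fact they come from the negation hidden in the step above: forming the negative of a level-one element swaps its two legs ($-\,s_+ \na_{s_-} = s_- \na_{s_+}$), so the twist $(1_{D_-} - g P_-)$ plants the terms $w_\mp \circ P_- \circ g$ into $u_\pm$, and these survive into the last line as $\hat s_\mp \circ p_-$. Second, in merely special (as opposed to very special) $GK^G$-theory the corner embeddings $f_1, f_2$ are \emph{not} connected by an ordinary rotation homotopy: the $M_2$-action need not have the product form $\gamma \otimes \id_{M_2}$, so $f_2 f_1^{-1} \neq \id$ in general. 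The paper instead fuses $f_1^{-1}$ as an inverse corner embedding (rows three to four) and $f_2$ as a homomorphism (rows four to five); the rotation-homotopy collapse you invoke is exactly what distinguishes the very special theory, which the paper treats separately in the final paragraph of its proof.
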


\begin{proof}

{\bf Line 1 and last row:} 
 Let us explain this diagram. 
 The first line represents a given date, 
namely $s_+ \na_{s_-}$. 
 The right most row represents the given standard $K$-theory element $p_+ \na_{p_-}$. 
Here, the algebras are $E:= M_n(M_m(A))$ and 
$D_\pm:= M_n(M_m(X)^+) \square_{p_\pm}   
 \C   $  for short. 
 Recall 
 from Subsection \ref{subsec22}.(c)       
 that non-equivariantly we have $D:= D_- = D_+$.    
 In the 
 diagram we have wrongly drawn the arrows $g$ and $p_-$
 between $\C$ and $D_+$ for better readability, but 
 in reality they 
 stretch further between $\C$ and $D_-$. 

{\bf Line 2:} 

Our aim is to show that the first line of the above diagram multiplied by $e^{-1}$ 
is the second line of the above diagram. 

\if 0
{\bf (a)} 
 The second line is determined by the first line and $e^{-1}$ by applying 
 \cite[Corollary 9.8]{aspects}  
 (fusion with th inverse of a corner emebedding) and gives 
 $e^{-1} s_+ \na_{s_-}  = 
v_+ \na_{v_-}$. 
\fi  

{\bf (a)} 
At first bring $s_+ \na_{s_-}$ to 
a form in $GK^G$,
such that we may 
assume that $X \subseteq  \call_J(\cale)$ 
(for example $\cale = J$) and $J = \calk_J(J)$, see \cite[Lemma 7.3]{gk}.  

\if 0
If necessary, add a summand $J$ to $\cale$ such that $X^+ \subseteq \call_J(\cale)$.
 \fi 
 
Denote by $M := X \square_{s_-} A 
\subseteq	  
 \call_J(\cale ) \oplus  A$ the non-equivariant subalgebra,  
see Subsection \re{subsec22}.(c).     
By \cite[Lemma 7.3]{gk},    
the $M_2$-space of $s_+ \na_{s_-}$  
has a $G$-action by restriction as follows, 
\be{eq100}
(M_2(M),\delta) \subseteq  \Big (M_2 \big (\call_J(\cale) \oplus 
A  \big ), \ad (S \oplus  \alpha \oplus T \oplus \alpha) \Big  ), 
\en
where $S,T$ are two 
$G$-actions on the 
$J$-module $\cale$ ,
and $(A,\alpha) \cong (\calk_A(A), \ad (\alpha))$.  
\if 0
and $M= \call_J(\cale ) \square_{s_-} A
\subseteq	  
 \call_J(\cale ) \oplus  A$. 
\fi

{\bf (b)}   	
Our first objective is to 
replace the first two lines of the above diagram by their unital versions  
(for $n=1$ for the moment)  
$$\xymatrix{
B  \ar[r]^e  \ar[d] & J \ar[r] 
\ar[d]  
& X^+ \square_{s_-^+}  
A^+   \ar[d]  
\ar@<.5ex>[r]
& A^+ \ar[l]^{s_\pm^+ 
} \ar[d]^{e^+}    \\
B  \ar[r]^h     
  & M_m(J) \ar[r]^-{j_3} 
 & M_m(X^+) \square_{ r_- } 
M_m(A^+)    
\ar@<.5ex>[r] 
& M_m(A^+) \ar[l]^-{r_\pm
 }   . } $$
%
  
\if 0
$$\xymatrix{
B  \ar[r]^f   & J \ar[r] 
& X^+ \square_{s_-^+}  
A     
\ar@<.5ex>[r]
& A^+ \ar[l]^{s_\pm^+ 
}    
}$$
where $s_\pm^+: A^+ \rightarrow X^+$ are the unitalizaiotns. 
This line is overwritten into the first line of the diagram, and $e$
by the unitalization 
$e^+: A^+ \rightarrow M_n (M_m(A^+))$.

The proof goes as follows in that one at first extends $A$ to $A^+$ in the first 
line of the digram and replaces $s_\pm$ by their unitalizations 
$s_\pm^+: A^+ \rightarrow X^+$ in ...
\fi

\if 0
The first line was 
discussed in ... and goes, because non-equivariantly 
we extend $s_+ \na_{s_-}$ at first to its unitalization 
$s_+^+ \na_{{s_-}^+}56764767890
\fi

Indeed, if $s_+ \na_{s_-}$ is carefully written
out as in \re{eq33}, then the first line of the last
diagram is 
the valid morphism 
\be{eq102}
(s_+^+ \oplus \id_{A^+}) \cdot \;f_2^+  \cdot {f_1^+}^{-1} \; \cdot \Delta_{s_-^+ \oplus \id_{A^+}} \;  \cdot e^{-1}   , 
\en
where the unitizations 
$f_1^+ ,f_2^+: M^+ \rightarrow (M_2(M^+))$  
of the - by assumption special - corner 
embeddings $f_1,f_2$ 
exist by 
Definition \ref{def31}. 

Thereby observe that if $M := X \square_{s_-} A$ then indeed $X^+ \square_{s_-^+}  
A^+ \cong M^+$ is just the 
unitization of $M$.  
 Also observe that $s_+^+(1) -s_-^+(1) = 1-1 =0 \in J$ 
as required in Subsection \ref{subsec22}.(c). 
 For the canonical 
 embedding $\iota_A: A \rightarrow A^+$, by \cite[Lemma 7.2]{gk} we get 
 \be{eq105}
 \iota_A \cdot s_+^+ \na_{s_-^+} = (s_+^+ \circ \iota_A) \na_{s_-^+  \circ  \iota_A}  
	= s_+ \na_{s_-}     . 
 \en

\if 0
This step was done in ....
 (Unfortunately we have slight flaw in ... in \cite{gk} as
 we did overlook and not mention 
 the necessary assumption of speciality of the $f_i$s.) 
\fi

{\bf (c)} 
\if 0
Also, in order that we can procced with results from... we write 
\be{eq100}
M_2(M^+) = (M_2 (\call_B(\cale 
) \square_{s_-^+} A+), \ad (S \oplus T) \oplus \alpha^+).  
\en
\fi

Note that the $G$-action notated in \re{eq100} 
is defined on $M_2(\call_J(\cale) \oplus A)$, 
and thus also on the bigger unital algebra 
  $M_2 \big (\call_J(\cale \oplus 
B) \oplus A^+ \big)$ 
(by an obvious $G$-action), 
which contains also $M_2(M^+)$ non-equivariantly, 
but a typical general question is if the subalgebra $M_2(M)$ is invariant under it (even if  we know it is in our current example),  
what we want to inspect now. 

This is 
clarified  
by the 
equivalent equivariance condition of the space $M_2(M)$ 
in $s_+ \na_{s_-}$  
in \cite[Lemma 6.5]{gk}, 
namely 
one needs $s_-(a)(S_g T_g^{-1} - 1_{\call_B(\cale)}) \in \calk_B(\cale)$ 
for all $a \in A, g \in G$ (and 
similarly so for $T_g S_g^{-1}$) 
in order 
that $M_2(M)$ is $G$-invariant,
and for $G$-invariance of $M_2(M^+)$ 
in $s_+^+ \na_{s_-^+}$, 
which holds if $f_1,f_2$ are special in 
$s_+ \na_{s_-}$ 
by (b) - and by Lemma \ref{lem31}.(iii) the $G$-action 
of $M_2(M^+)$ is 
unique and thus it must be invariant under 
the  considered 
$G$-action of \re{eq100} -,    
this means 
\be{eq55} 
S_g T_g^{-1} - 1_{\call_B(\cale)} \in \calk_B(\cale)   ,
\en 
where we have omitted the now 
ineffective factor $1_{\call_B(\cale)} = s_-^+(1_{A^+})$.

Actually one had to add a summand $B$ to $\cale$ at first to handle $M^+$ and thus the criterion 
$(S_g \oplus \beta_g) (T_g^{-1} \oplus \beta_g^{-1}) 
-   
1_{\call_B(\cale \oplus B)} \in \calk(\cale \oplus B)$, but this is obviously 
equivalent to \re{eq55}, where $(B,\beta) =(\calk_B(B),\ad(\beta))$. 

\if 0
By the $G$-invariance condition of the space $M_2(M)$ of $s_+ \na_{s_-}$, 
namely 
$$s_-(a)(S_g T_g^{-1} - 1_{\call_B(\cale)}) \in \calk(\cale)$$ 
for all $a \in A, g \in G$ 
this means for $s_+^+ \na_{s_-^+}$  that
\fi 

On the other hand, 
the 
validity of condition \re{eq55} for the $M_2$-space $M_2(M)$ 
of $s_+ \na_{s_-}$ 
always
shows that the corresponding corner embeddings $f_1,f_2$ are special 
because 
by invariance condition \cite[Lemma 6.5]{gk}  again this shows that
not only $M_2(M)$ but also  $M_2(M^+)$ 
is invariant and 
thus the above 
unitization 
\re{eq102} can be 
formed.

\if 0
 
$$(S_g \oplus \alpha_g) (T_g^{-1} 
\oplus \alpha_g^{-1})  \in 1_{M^+} 
+  \calk_B(\cale) \oplus 0
\in  M^+ , J^+ .$$ 
But the lower left corner $G$-action on $M_2(M)$ is always of the
form $\gamma_g(m)= \gamma_g(1) \alpha_g(m) = S_g T_g^{-1}
\cdot \alpha_g(m) \in M^+$ by the analysis.
 
\fi 

{\bf (d)} 
%
We have 
proven in (c) that - completely in general - corner embeddings $f_1,f_2$ 
at the $M_2$-space 
as in \re{eq100}  in a morphism $s_+ \na_{s_-}$, in details \re{eq33},  are special if and only if the invariance condition
\re{eq55}  holds.

{\bf (e)}		
But this also implies that if we restrict $s_\pm$ to a subalgebra,
so consider the morphism $z:= (s_+ \circ \varphi) \na_{s_- \circ \varphi}$
for a 
homomorphism $\varphi: A' \rightarrow A$   
by \cite[Lemma 7.2]{gk}
then  
the $M_2$-action of $z$ is special again because 
the strong invariance condition \re{eq55} 
persists (also for subspaces $M_2(N) \subseteq M_2(M)$ 
and the embedding \re{eq100} remains).  

{\bf (f)}		
Then one 
applies 
\cite[Proposition 9.7]{gk} 
(fusion of an inverse corner embedding 
with an extended splitexact sequence)   
to obtain the second line of the last diagram   with 
$${e^+}^{-1} \cdot s_+^+ \na_{s_-^+}     
=  r_+ \na_{r_-}	  .$$ 
 
 After tedious inspection of proof there, it turns out that
 canonically 
 we have
	$r_\pm  
	:= s_\pm^+ \otimes \id_m$  
	and $j_3$ 
	is the canonical embedding $j_3(j) = j \oplus 0$.
Note that these are the non-equivariant formulas, but the $G$-actions 
in the range of $r_+$ and $r_-$ are 
different and influenced by $s_+^+$ and $s_-^+$, respectively. 

Since we assume that the matrix size $m$ is finite, 
the map $r_-$ of the second line of the above diagram is unital, 
and thus the $M_2$-space of that line is unital, and thus special.

\if 0
Since we assume that the matrix size $m$ is finite, 
the map $r_-:M_m(A^+) \rightarrow M_m(X^+) \square_{ r_- } M_m(A^+) =: N$ is unital, 
and thus $N$  is unital, and hence the $M_2$-space $M_2(N)$ is unital 
and thus special.    
\fi 

\if 0 
The proof also 
reveals that the $M_2$-space of the second line
of the last diagram satisfies the analogous invariance 
condition \re{eq55}, that is, without any factor $r_-(a)$   
in front, even for $m=\infty$, whence it is special by (d)!  f F
\fi 

Applying \cite[Lemma 7.2]{gk}, 
similarly as in \re{eq105} we get $\iota  \cdot 
r_+ \na_{r_-} = (r_+  \circ \iota) \na_{r_- \circ \iota}  = :  v_+ \na_{v_-}$ 
(where $\iota: M_m(A) \rightarrow M_m(A^+)$ is the canonical embedding) 
and thus the second line 
of the last diagram yields the second line of the diagram 
of the proposition, by 
(e) with special $M_2$-action 
again. 

{\bf (g)}		
Summarizing we have also 
fused the first line of the diagram 
of the proposition with $e^{-1}$ to obtain the second line,
namely we have
\begin{eqnarray*}
e^{-1} \cdot s_+ \na_{s_-} 
&=&  e^{-1}  \cdot \iota_A \cdot s_+^+ \na_{s_-^+} = 
\iota \cdot {e^+}^{-1} \cdot s_+^+ \na_{s_-^+}     
= \iota  \cdot r_+ \na_{r_-} 
=   v_+ \na_{v_-}     ,
\end{eqnarray*}
where $v_\pm = r_\pm \circ \iota = s_\pm \otimes \id_m$. 

{\bf (h)}  
In the same vein we can now treat the case $n \neq 1$ 
by fusing with another corner embedding $l: M_m(A) 
\rightarrow M_n(M_m(A))$ to obtain the final
second line of the diagram of the 
proposition with 
$v_\pm = s_\pm \otimes \id_m \otimes \id_n$.   
%

\if 0
By what we said before, the second line has only
special corner embeddings again i n the middle. 
\fi

\if 0
Hence 


The first line was 
discussed in ... and goes, because the the $M_2$-action of the 
first line can be unitalized as the involved corner embeddings, 
that is $f_1,f_2: M \rightarrow M_2(M)$ in Section \ref{subsec22}.(c) 
in the detailed notation of $s_+ \na_{s_-}$ in \re{eq33}, are special and thus can be equivariantly extended to their unitalizations 
$f_1^+ ,f_2^+: M^+ \rightarrow (M_2(M^+), \ad (S \oplus T))$ by definition \re{def31}. 
Indeed if $M := X \square_{s_-} A$ then $X^+ \square_{{s_-}^+} A^+ \cong M^+$ and the carefully spelled out morphism
associated to the first line of the last diagram is 
$$
(s_+^+ \oplus \id_A) \cdot \;f_2^+  \cdot {f_1^+}^{-1} \; \cdot \Delta_{s_-^+ \oplus \id_{A^+}} \;  \cdot e^{-1} 
$$
But by the equivariance condition .... this means that 
$s_-^+(1_{A^+})(S_g T_g^{-1} - \id_{M^+}) 
= (S_g T_g^{-1} - \id_{M^+}) \in \calk(\cale)$.

 (Unfortunately we have slight flaw in ... in \cite{gk} as
 we did overlook and not mention 
 the necessary assumption of speciality of the $f_i$s.) 

Then one applys ....  to obtain the second line of the last diagram. 
 After tedious inspection of proof there, it turns out that
 canonically 
	$v_\pm^+ 
	:= (s_\pm^+ \otimes \id_m) \otimes \id_n$  
	and $j_3$ (and $j_1$) is the canonical embedding $j_3(j) = j \oplus 0$.
	The $G$-action is then chosen as follows: 
	 
	 Assumig w.l.o.g. that $m=1$, the $G$-action 

\if 0
	 (for $+$ and $-$ separately) on 
$$M_n(X^+)  
\cong \calk_{X^+}( (X^+)^n)
\cong \calk_{X^+} ( (A^+)^n \otimes_{s_\pm^+} X^+)$$
\fi 
(non-equivariant ismomorphisms)
	 the $M_2$-algebra of the second line, namely
\if 0 
$$M_2(M_n(X^+))  
\cong \calk_{X^+} \Big ( (X^+)^n \oplus (X^+)^n \Big)
\cong \calk_{X^+} \Big ( (A^+)^n \otimes_{s_-^+} X^+
 \oplus (A^+)^n \otimes_{s_+^+} X^+ \Big )$$
is defined by 
taking  in the latter space the $G$-action
$\ad( U \otimes_{s_\pm^+} V_\pm)$,
where $\ad(U)$ is the 
$G$-action on $M_n(A^+) \cong
\calk_{A^+}( (A^+)^n)$ and
$\ad(V_- \oplus V_+)$ the 
one on $M_2(X^+) \cong \calk_{X^+}( X^+ \oplus X^+)$ 
from the first line of the last diagram. 
\fi

$L= M_m(J)$, $\cale$ is $L$-module with at least one $L$-summand,
$X^+= \call_L(\cale)$ 
actin on $M_2$ via two $S,T$ actions on $\cale$,
$S T^{-1}- 1 \in \calk_L(\cale)$

unitalisierung: $(S \oplus 1)(T  \oplus 1)^{-1}$, wo $1$ id auf $L$

$$M_2(M_n(X^+))  
\cong \calk_{J_2} \Big ( (X^+)^n \oplus (X^+)^n \Big)
\cong \calk_{X^+} \Big ( (A^+)^n \otimes_{s_-^+} X^+
 \oplus (A^+)^n \otimes_{s_+^+} X^+ \Big )$$
is defined by 
taking  in the latter space the $G$-action
$\ad( U \otimes_{s_\pm^+} V_\pm)$,
where $\ad(U)$ is the 
$G$-action on $M_n(A^+) \cong
\calk_{A^+}( (A^+)^n)$ and
$\ad(V_- \oplus V_+)$ the 
one on $M_2(X^+) \cong \calk_{X^+}( X^+ \oplus X^+)$ 
from the first line of the last diagram. 

Analogously one then fuses with the special corner 
$M_m(A) \rightarrow M_n(M_m(A))$. By the last discussion
we also get an extended diagram for the extended corner 
emebedding  
	
	aufgrund des kriteriums $S_g T_g^{-1} - 1 \in J$, 
	bleibt special $M_2$-aktion bei matrix fusionierung,
	unitalisierung, und invarinater teilagbera auf $A$-seite
	einschränkung immer erhalten 
	
	\fi
	
	\if 0
	neben der einschränkung wird ja auch manchmal die mittel
	algebra wieder verkleinert indem man einen überflüssigen
	summnden wegnimmt -> gilt auch special (?)
	\fi

	{\bf Line 3:} 
	
Our aim is to show that the second line of the diagram of the proposition multiplied by 
$\Delta_{p_-}$ 
is the third line of that diagram. 

	{\bf (m)} 
	Because $l$ is special by assumption, we have also the 
 	corner embedding 
	unitization 
	$l^+: M_m(A)^+   \rightarrow M_n(M_m(A)^+) =:Y$ 
	by Definition \ref{def31} 
	available and 
	can thus build 
	a line  
	$\hat s_+ \na_{\hat s_-}\in GK^G(Y,B)$     
	(see formula \re{eq36})
	like the third line of the diagram  of the 
	proposition by amplification of $s_+ \na_{s_-}$,
	momentarily without the $\square_{p_-} \C$ part, 
	by 
	unitization (see (b)), and extension to range of corner 
	embedding (see (f)). 

	\if 0
	Because $l$ is special by assumption, we have also $l^+$ available and 
	can build 
	a line  
	like the third line of the diagram,
	momentarily without the $\square_{p_-} \C$ part, 
	by unitalization (see (b)), extension to range of corner 
	embedding (see (f)), and then restriction 
	to the subalgebra 
	$Y:= M_n(M_m(A)) + p_-(\C) \subseteq M_n(M_m(A)^+)$
	(see (e)).
	\fi 

	In all that steps, as proven, the $M_2$-action remains 
	special. 
	 
	{\bf (n)} 
	By fusing that so created line 
	$\hat s_+ \na_{\hat s_-}$   with  
	the homomorphism $q: D_- \rightarrow Y$, which is the canonical 
	projection onto the first summand, 
	we get a 
	provisional  third line 
	$w_+ \na_{w_-} := 
	 q \cdot \hat s_+ \na_{\hat s_-} = (\hat s_+ \circ q) \na_{\hat s_- \circ q}$ 
	in the diagram of the 
	proposition.

	\if 0
	{\rm (b)} 
	 By all the 
	 invloved canonical formulas we have seen so far,
	 the two splits of this line 
	 are just $\hat s_\pm$ (see \re{eq36}) restricted to $Y$. 
	 Unless $p_- \neq 0$, 
	 as a linear space, $Y  
	 \cong M_n(M_m(A)) \oplus \C$ 
	 because by assumtion 
	 $p_-(\C) \subseteq 
	 M_n(\C)$, and 
	 thus the canonical projection $q: D_- \rightarrow Y$  
	 onto the first summand of $D_-$ is an algebra isomorphism. 
	 
	 AUSSER $p_-=0$
	 \fi

	{\bf (o)}
  We shall use precise notation
  and thus are going to redefine 
  $P_\pm := p_\pm \oplus \id_\C$. 

	We alter $w_+ \na_{w_-}$ to a final element 
\if 0
	We alter $w_+ \na_{w_-}$ with weitblick to a final 
  
	 We may thus set $u_\pm'    := 
  \hat s_\pm 
  \circ q $ tentiativelly 
  and then 
  mit weitblock  
\fi
  \be{eq118}
  u_+ \na_{u_-} := (1_{D_-} -  g P_- ) w_+ \na_{w_-}
  =  (w_+ \oplus w_- \circ P_- \circ g  )  
  \na_{w_- \oplus w_+ \circ  P_-  \circ g}   
  \en 
	\if 0
  \be{eq118}
  u_+ \na_{u_-} := (1_{D_-} -  g P_-) u_+' \na_{u_-'}
  = \big (u_+' \oplus u_-' \circ (g P_-)  \big ) 
  \na_{u_-' \oplus u_+' \circ (g P_-)}   
  \en 
	\fi
  to obtain the 
  third line of the diagram
  of the proposition, 
  where for the last identity we have used 
	 \cite[Lemma 7.2]{gk}
(fusion of a homomorphism with an extended double split
exact sequence), 
\cite[Corollary 9.10]{gk} 
(forming the 
negative of an extended double split exact sequences)
and \cite[Lemma 9.9]{gk} 
(adding two extended split exact sequences). 

	 By (e) and formula \re{eq118}, the $M_2$-space of $u_+ \na_{u_-}$ is special again.
	 
	 The last identity above should not only 
	demonstrate that 
	 a $\Z$-linear span of extended split exact sequences 
	 may be fused to a single one, 
	but how 
	$u_\pm$ are defined.  
	 
{\bf (p)} 
Since $v_\pm$ 
is the restriction of   $u_\pm$,     
we get already
  by \cite[Lemma 7.2]{gk}  
  (fusion with a homomorphism) that
  $ v_+ \na_{v_-} =  i \cdot u_+ \na_{u_-}$. 
  
  By the splitexactness axiom of $GK^G$-theory, see Subsection 
  \ref{subsec21},  we get 
  $$1_{D_-} = \Delta_{P_-} i + g P_-    .$$
  Thus we get the desired identity 
  $$\Delta_{P_-} v_+ \na_{v_-} = \Delta_{P_-} i u_+ \na_{u_-}  
  =  (1_{D_-} -  g P_-) u_+ \na_{u_-} 
  = u_+ \na_{u_-}    ,$$
  where for the last identity we have used 
  the first identity of \re{eq118}.

	\if 0
	We may restrict this provisorial third line of the diagram
	to the 
	subalgebra $Y:= M_n(M_m(A)) + p_-(\C) \subseteq M_n(M_m(A)^+)$ 
	by ...., and still have special $M_2$-action by (d).

	The third line we freely define by
	$j_2(j) = j \oplus 0$ (canonical emebdding) 
	and  
 %
 $u_\pm    := 
  \hat s_\pm 
  \circ q $, 
  where $q: D_- \rightarrow M_n(M_m(A)^+)$ is the canonical projection onto  
  the first summand of $D_-$. 
  Observe the crucial point, that indeed 
  $u_-(d) - u_+(d) \in M_n(M_m(J))$ 
  for all $d \in D$ since $s_-(a) - s_+(a) \in  J$ for all $a \in A$ as required
  for extended double splitexact sequences. 
  \fi 
  
  {\bf Line 4:} 

  Now the homomorphisms $f_1$ and $f_2$ are the corner 
  embeddings that change the $G$-action on $D$ as explained 
  in Subsection \ref{subsec22}.(c). 
  Again, the fourth line of the diagram is determined by 
  the third line,  
  $f_1$ and 
  result (g) (fusion with the inverse of a corner embedding) and yields $f_1^{-1} u_+ \na_{u_-}  = 
z_+ \na_{z_-}$. 
  
  {\bf Lines 5 and 6:} 
  The fifth line is determined by the fourth line and $f_2$ 
  according to  
  \cite[Lemma 7.2]{gk}
  (fusion of a homomorphism with an
  extended double splitexact sequence) 
  and results into  
	$f_2 z_+ \na_{z_-} = y_+ \na_{y_-}$ 
	with $y_\pm= z_\pm \circ f_1$, 
  and similarly the sixth line by fusing the fifth line with the 
  homomorphism 
  $p_+$ by applying \cite[Lemma 7.2]{gk}, 
  namely $P_+  y_+ \na_{y_-} = t_+ \na_{t_-}$.     


{\bf Revision:} 
To revisit, 
by   
(g)
(fusion of inverse corner embedding with an extended double
split exact sequence)
 we have that $e^{-1} s_+ \na_{s_-}  = 
v_+ \na_{v_-}$ 
and 
$z_+ \na_{z_-} = f_1^{-1} u_+ \na_{u_-}$   
with $z_\pm = u_\pm \otimes \id_{M_2}$. 
By   \cite[Lemma 7.2]{gk}
(fusion of a homomorphism with an extended double split
exact sequence) 
we obtain 
$P_+ f_2 z_+ \na_{z_-} = t_+ \na_{t_-}$ 
for $t_\pm = z_\pm \circ (P_+ f_2)$.  
Finally we recall $\Delta_{p_-} v_+ \na_{v_-} =  u_+ \na_{u_-}$ from (p) above. 
\if 0
Finally, 
we are going to show that 
\be{eq109}
\Delta_{p_-} v_+ \na_{v_-} =  u_+ \na_{u_-} 
\qquad \mbox{(claim).} 
\en 
\fi 
 %

Collecting these identities together 
yields the desired result \re{eq32b}, namely
$$P_+ f_2 f_1^{-1} \Delta_{p_-} e^{-1} \cdot  s_+ \na_{s_-} = t_+ \na_{t_-}   .$$

{\bf Formula for t:} 
The maps $t_\pm$ have the formula 
as notated in the last line of the diagram of the 
proposition    
because, by ignoring 
any $G$-actions, we have 
by the 
formula 
of \cite[Lemma 7.2]{gk} 
that 
$t_\pm = z_\pm \circ (P_+ f_2) = (u_\pm \otimes \id_{M_2})
 \circ f_2 \circ P_+ = 
u_\pm \circ P_+$. 
\if 0
$$t_\pm  
= u_\pm \circ (p_+ \oplus \id_\C)
= \hat s_\pm \circ q \circ  (p_+ \oplus \id_\C)
= \hat s_\pm \circ p_+$$
\fi 
Hence, 
going back to \re{eq118} how $u_\pm$ are defined 
and observing $q \circ (p_\pm \oplus \id_\C) = p_\pm $  ($q$ from (n)) 
we get 
\begin{eqnarray}
t_\pm & = & 
u_\pm \circ P_+= (w_\pm \oplus w_\mp \circ P_- \circ g) \circ P_+
  =  w_\pm \circ P_+ \oplus w_\mp \circ P_-  
		\label{e177}  \\
  &=&  \hat s_\pm \circ q \circ P_+ \oplus   
 \hat s_\mp  \circ q \circ P_- 
= \hat s_\pm \circ p_+ \oplus \hat s_\mp \circ p_-    .  
\nonumber 
\end{eqnarray}
 
	\if 0
  {\bf Proof of claim:} 
  \if 0
  To prove the claim \re{eq109}, by    
    \cite[Lemma 4.3.(iii)]{gk}   
  it is sufficient to show that all squares spanned 
  between line two and three of the diagram commutes  
   and $i_2 \circ v_- \circ r_1 =  r_2 \circ u_- \circ i_2$.   
  \fi 
  To prove the claim \re{eq109}, 
  since $u_\pm$ is the restriction of $v_\pm$, we get already
  by \cite[Lemma 7.2]{gk}  
  (fusion with a homomorphism) that
  $i v_+ \na_{v_-} = u_+ \na_{u_-}$. 
  
  We shall use precise notation
  and thus are going to redefine $V_\pm:= v_\pm \oplus \id_E$, 
  $P_\pm := p_\pm \oplus \id_\C$ 
  and 
  $U_\pm := u_\pm \oplus \id_{D_\pm}$ for the rest of the proof.
  
  By the splitexactness axiom of $GK^G$-theory, see Subsection 
  \ref{subsec21},  we get 
  $$1_{D_-} = \Delta_{P_-} i + g P_-.$$ 
  In general it holds that $U_- \Delta_{U_-} = 0$ 
  for any $U_-$ by  \cite[Remark 3.2.(vi)]{aspects}. 
  Recall from Subsection \ref{subsec22} that $v_+ \na_{v_-}$ is short form 
  for  
  $$V_+ F_2 F_1^{-1}    \Delta_{ V_-} 
  h^{-1} , $$
  where $F_1,F_2$ are the two corner embeddings.   
  Thus
  $$\Delta_{P_-} v_+ \na_{v_-} = \Delta_{P_-} i u_+ \na_{u_-}  
  =  (1_{D_-} -  g P_-) u_+ \na_{u_-} 
  = u_+ \na_{u_-}$$
  where for the last identity we have used 
  the first identity of \re{eq118}. 
  
  $$ 
  = (1_{D_-} -  g P_-)  (U_+ F_2 F_1^{-1}  - U_- )  \Delta_{U_-} 
  h^{-1}
  = u_+ \na_{u_-}$$
  because the image of $P_-$ is in the algebra
  $M_n(\C) \oplus \C \subseteq D_-$ by the proposition's assumption of standardness of $p_+ \na_{p_-}$,
  an algebra where $U_+ - U_-$ maps to zero by construction 
  and so $P_-  (U_+   - U_-)= 0$. 
  But to get rid of $F_2 F_1^{-1}$ at first, 
  use also  here that $P_- U_+ F_2$ is homotopical to 
  $P_- U_- F_1$ by an ordinary rotation homotopy in $M_2(D)$, 
  which is possible because the $G$-action on $M_2(M_n(\C)  
  \oplus \C) \subseteq M_2(D)$ into which $P_- U_+ F_2$ maps is of the form $\id_{M_2} \otimes \alpha$.   
  This is the claim. 
	\fi

  
  
{\bf Very special $GK$-theory:}
For very special $GK$-theory the proof of 
``Line 2" is much simpler. 
Given $s_\pm :(A,\alpha) \rightarrow (X,\gamma)$ in the first line of the diagram of the proposition, and the corner 
embedding $e:(A,\alpha ) \rightarrow 
(M_m \otimes A, \delta \otimes \alpha)$, set  
simply $v_\pm := \id_{M_m} \otimes s_\pm: (M_m \otimes A, \delta \otimes \alpha) \rightarrow 
(M_m \otimes X, \delta \otimes \gamma)$ (for $n=1$). 
The corner embedding $h$ is then very special again with 
$h:(B,\beta) \rightarrow  (M_m \otimes J, \delta \otimes \gamma|_J)$.   
\if 0
In very special $GK^G$-theory we have no $M_2$-space, that is,  
the corner embeddings $f_1,f_2$ in \re{eq33} can be omitted. 
In that case, 
in general it holds that $s \Delta_{s} = 0$ 
  for any $s$ by  \cite[Remark 3.2.(vi)]{aspects}. 
  That is why 
$u_+ \na_{u_-}  = w_+ \na_{w_-}$ in definition 
\re{eq118}  
if we assume that $P_-$ is in the algebra
  $M_n(\C) \oplus \C \subseteq D_-$,  
  an algebra where $w_+ - w_-$ maps to zero by construction in (n)  
  and so $w_+ \circ P_- \circ g = w_- \circ P_- \circ g$. 
Thus we end up with the short formula 
as stated in the last line of the diagram of the proposition 
in computation \re{e177}.  
\fi  
%
 %
\end{proof}

Propositions \ref{pstand} and \ref{prop02} 
are key to our first main result: 

\begin{corollary}			\label{cor1}

In very special $GK^G$-theory,
for each object $A$, 
every morphism in $GK^G(\C,A)$ 
can be presented
as a level-one morphism, that is, we have
$$GK^G(\C,A) = L^1 GK^G(\C,A)    .$$

\end{corollary}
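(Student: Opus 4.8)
The plan is to combine the factorization of $GK^G$-morphisms into level-one pieces with the two preceding propositions by an induction on the length of such a factorization. First I would invoke \cite[Lemma 10.2]{gk}, which expresses an arbitrary morphism $\xi \in GK^G(\C,A)$ as a finite product
$$\xi = \xi_1 \cdot \xi_2 \cdots \xi_k$$
of level-one morphisms $\xi_1 \in L_1 GK^G(\C,A_1)$, $\xi_2 \in L_1 GK^G(A_1,A_2)$, \ldots, $\xi_k \in L_1 GK^G(A_{k-1},A)$ for suitable intermediate objects $A_j$. In very special $GK^G$-theory each such level-one factor only involves very special, hence special, corner embeddings, so all hypotheses of Propositions \ref{pstand} and \ref{prop02} are met automatically, and the finiteness restriction on matrix sizes is lifted precisely in this setting.

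The induction then runs on the partial products $\eta_j := \xi_1 \cdots \xi_j \in GK^G(\C,A_j)$, with the inductive hypothesis that $\eta_j$ lies in $L_1 GK^G(\C,A_j)$ and can moreover be displayed in the standard form of Proposition \ref{pstand}. For the base case, $\eta_1 = \xi_1$ is already a level-one element starting at $\C$, and Proposition \ref{pstand} puts it into standard form. For the inductive step, $\eta_{j+1} = \eta_j \cdot \xi_{j+1}$ is the product of a standard $K$-theory element $\eta_j \in L_1 GK^G(\C,A_j)$ with an arbitrary level-one element $\xi_{j+1} \in L_1 GK^G(A_j,A_{j+1})$, so Proposition \ref{prop02}, in its very special form where the whole computation stays inside very special $GK^G$-theory, identifies this product with a single level-one element $t_+ \na_{t_-} \in L_1 GK^G(\C,A_{j+1})$. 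Re-applying Proposition \ref{pstand} to $t_+ \na_{t_-}$ restores the standard form, so the inductive hypothesis is propagated; at $j=k$ we conclude $\xi = \eta_k \in L_1 GK^G(\C,A)$, which together with the trivial inclusion $L_1 GK^G(\C,A) \subseteq GK^G(\C,A)$ yields the asserted equality.

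The main point to watch — rather than a genuine obstacle — is that the standard form produced by Proposition \ref{pstand} is exactly the hypothesis under which the clean fusion formula $t_\pm = \hat s_\pm \circ p_+ \oplus \hat s_\mp \circ p_-$ of Proposition \ref{prop02} is valid (namely that the image of $p_-$ sits in the unit matrix $M_n(\C)$); this is why the standardization must be interleaved between successive fusions rather than postponed to the end. One should also verify that the standardization step of Proposition \ref{pstand}, which only rotates homomorphisms and uses ordinary matrix corner embeddings, itself stays within very special $GK^G$-theory, and that the $M_2$-actions remain special across all $k$ steps; the latter is guaranteed because Proposition \ref{prop02} explicitly records that all its $M_2$-actions stay special and that its output corner embeddings are again very special, so the inductive hypothesis can be carried entirely inside very special $GK^G$-theory.
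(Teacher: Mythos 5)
Your core induction --- alternating Proposition \ref{pstand} (standardization) with Proposition \ref{prop02} (fusion) along a factorization, restoring standard form after each fusion --- is exactly the engine of the paper's proof. The difference, and the gap, lies in how you obtain the factorization to start with. You invoke \cite[Lemma 10.2]{gk} to write $\xi \in GK^G(\C,A)$ as a product of level-one morphisms. But that lemma is a statement about the $GK^G$-theory of \cite{gk}, in which all generalized (``unprojective'') corner embeddings are declared invertible; the present corollary lives in very special $GK^G$-theory, which is a different generators-and-relations category with strictly fewer synthetical generators. A factorization valid in the larger theory neither automatically exists in, nor descends to, the very special one, and nothing guarantees that the level-one factors it produces are built only from very special corner embeddings --- which is precisely the hypothesis you need in order to apply the very special clause of Proposition \ref{prop02} (Proposition \ref{pstand} tolerates general corner embeddings, but Proposition \ref{prop02} does not). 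Your parenthetical claim that ``each such level-one factor only involves very special, hence special, corner embeddings'' is exactly the point at issue, and it is asserted rather than proved.

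The paper's own proof avoids this entirely: it takes a representative of $\xi$, which by the very definition of the quotient $L/R$ is (after expanding brackets) a $\Z$-linear sum of products of generators $a_i \in \Theta$ of the very special theory; it converts each generator into a level-one morphism $\chi(a_i)$ by Lemma \ref{lem1}, whose diagrams use only identity embeddings and the given very special $e$, so everything stays inside the theory; it then runs your alternating induction on each product separately; and finally it combines the finitely many resulting $L_1$-elements --- one per summand --- into a single level-one element using \cite[Corollary 9.10]{gk} (negatives) and \cite[Lemma 9.9]{gk} (direct-sum addition). Note that your proposal also says nothing about sums: an element of $GK^G(\C,A)$ is in general a $\Z$-linear combination of products, not a single product, and unless \cite[Lemma 10.2]{gk} is re-proved inside the very special theory you need this last combining step. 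With your decomposition step replaced by the one just described (or supplemented by a proof that \cite[Lemma 10.2]{gk} holds verbatim in very special $GK^G$-theory), the rest of your argument goes through as written.
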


\begin{proof}

Given a product $z= a_1 a_2 \cdots a_n$ in very special $GK^G$ with generator 
morphisms $a_i \in \Theta$, 
define 
$a_0 := 1_\C$ and turn all generators $a_i$ to level-one morphisms
$\chi(a_i) \in L_1$ according to 
Lemma \ref{lem1}.    
\if 0
$a_0 \in L_1$ to be the level one element associated to $1_\C$ by lemma ...
\fi
  By 
	Proposition \ref{prop02} we get 
$a_0 a_1 = \chi(a_0) \cdot \chi(a_1) \in L_1$ in $GK^G$. 
If necessary, we turn $a_0 a_1$ to standard form by Proposition
\ref{pstand}, so that we can apply Proposition \ref{prop02} again 
to form the product $a_0 a_1 a_2 = (a_0 a_1) \cdot \chi(a_2) \in L_1$.  
By successively 
repeating this  argument 
(i.e. $a_0 a_1 a_2 a_3 \in L_1$ etc.)   
we finally get $z \in L_1$. If we have given a $\Z$-linear sum of 
$L_1$-elements in $GK^G$
they can be summed up to a level one element in $GK^G$ again by 
\cite[Corollary 9.10]{gk} 
and \cite[Lemma 9.9]{gk},   
yielding the claim. 
\end{proof}


The proof of the last corollary also shows that 
in very special $GK^G$-theory (and by Proposition \ref{prop02} 
partially beyond it) 
we can 
explicitly compute 
the product 
$x \cdot y$ 
for any given $x \in GK^G(\C,A)$ and $y \in GK^G(A,B)$,
what is an important 
tool (for $B=\C$ and in $KK^G$)  for the Dirac and dual-Dirac elements method for the 
proof of the Novikov and Baum-Connes 
conjecture in Kasparov \cite{kasparov1988}, 
Higson and Kasparov \cite{higsonkasparov}, or see 
for instance  
\cite[Theorem 14.1]{guentnerhigsontrout} 
for the concept.  

\section{Classical $K$-theory}

In Corollary \ref{cor1} we have greatly simplified $K$-theory elements in
$GK^G(\C,A)$ to the level-one additive subgroup $L_1 GK^G(\C,A)$, but in the latter
set the complicated and 
intractable relations coming from the 
category $GK^G$  
remain. In this section we clarify this by showing that this 
additive group is just 
ordinary $K$-theory $K^G(A)$. 

To this end, 
in this section we 
entirely switch to very special $GK^G$-theory throughout. 
Recall 
that thus the 
$M_2$-action and their corner 
embeddings $f_1,f_2$ in \re{eq33}
can be omitted, and $s \na_{s} = 0$ in $GK^G$. 



In the next two definitions we define what 
is
meant 
by classical $G$-equivariant $K$-theory $K^G(A)$. 
It must be said that that definition is considerably more restrictive 
with respect to the allowed $G$-actions on projective $A$-modules 
than in 
established $G$-equivariant $K$-theory 
via $KK$-theory  in $C^*$-theory (in the form 
$KK^G(\C,A)$) 
\cite{kasparov1988}, or 
$G$-equivariant $K$-theory 
$K^G$-theory for 
Banach algebras 
for compact groups $G$, 
see for instance Blackadar \cite{blackadar}, 
but for finite discrete groups $G$ both versions of $K^G$ should coincide 
in principle modulo the considered 
class of algebras  
because of the 
validity of the Green-Julg theorem, Lemma \ref{lemma31}. 
Also notice that, meaningfully translated, neither 
Cuntz' universal half-exact $kk$-theory for 
locally convex algebras  
\cite{cuntz}, 
having no $G$-actions at all, nor Ellis'
$G$-equivariant version of $kk$-theory for rings 
\cite{ellis} go beyond 
very special invertible corner embeddings, as exclusively allowed 
from now on, in particular, in the $K^G$-theory 
to be defined next. 

For the next definition recall that we use element descriptions 
$s_+ \na_{s_-} \in L_1$ as in \re{eq33} and their diagram 
descriptions as in \re{eq35} interchangeable.  

\begin{definition}    \label{def41}
{\rm 

Assume $GK^G$-theory is very special. 
Define $S_1$ 
({\em standard form $L_1$}) to be the subset of $L_1$ corresponding 
to these special diagrams, for all $G$-equivariant algebras $A$ 
in $\Lambda^G$:
$$\xymatrix{ 
A      \ar[r] &  M_\infty(M_\infty(A)) \ar[r]      & M_\infty (M_\infty(A)^+)      
\square_{ s_-   
} \C      
\ar@<.5ex>[rr] 
  & &  \C   \ar[ll]^-{s_\pm}      
}   .
$$
A {\em homotopy} in $S_1$ is then an element in $s_+ \na_{s_-} \in S_1(\C,A[0,1])    
 \subseteq L_1(\C,A[0,1])$ 
and its two canonical evaluations $s_+^{(t)} \na_{s_-^{(t)}} \in S_1(\C,A)$ at the endpoints for $t=0,1$ 
are called {\em homotopic} and 
this is notated by $\sim$. 
  (Thereby, identifying $M_\infty(A[0,1])^+ \subseteq M_\infty(A)^+[0,1]$ at first. 
Confer also \cite[Lemma 8.10]{aspects} for similar 
homotopies.) 
An {\em addition} $\oplus$ on $S_1(\C,A) \subseteq L_1(\C,A)$ is declared 
by taking direct sums of elements in $S_1$, 
that is $s_+ \na_{s_-} \oplus t_+ \na_{t_-} :=
  (s_+ \oplus t_+ )\na_{s_-  \oplus t_-}$, 
confer also 
the similar   \cite[Lemma 9.9]{gk}. 
   
}

\end{definition}

\if 0
\begin{definition}
{\rm 

A homotopy 
in $L_1$ 
is an element $s_+ \na_{s_-} \in
L_1(\C,A[0,1])$
$$\xymatrix{ 
A[0,1]      \ar[r] &  M_n(A[0,1]) \ar[r]      & X[0,1]      
\square_{s_\pm } \C        &  \C   \ar[l]^{s_\pm}    
}
$$
The canonical evaluations at the endpoints of this element
are in $L_1(\C,A)$ 
and are called homotopic, notated $\sim$.  
Addition in $L_1(A,B)$ is declared by taking direct sum of elements,
cf.

}
\end{definition}

\begin{definition}
{
An isomorphism of an $L_1$-elemnt is...

}
\end{definition}
\fi

\begin{definition}[Classical $K$-theory]		\label{def42} 
{\rm

For each object $A$, 
define {\em classical equivariant $K$-theory} $K^G(A)$ 
as the set-theoretical quotient of $S_1(\C,A)$ divided by 
the equivalence relation $\equiv$ defined by 
(for all $x,y \in S_1(\C,A)$)  
$$ x \equiv y 
\qquad  \Leftrightarrow  \qquad \exists u, v \in \hom  
\big (\C,M_\infty (M_\infty(A)^+)    \big )
: \quad  
  x \;\oplus  \; u \na_u 
\sim  y \; \oplus \; v \na_v$$
(equivalence if and only if adding 
trivial elements is homotopical). 
Addition $\oplus$ on $K^G(A)$ is declared to be the one induced by 
taking direct sums in $S_1(\C,A)$ 
as described in Definition \ref{def41}.  
%
}
\end{definition}

It is easy to see by standard arguments that $\big (K^G(A),\oplus \big )$ is an 
abelian group, namely by the proof of \cite[Corollary 9.10]{gk} 
(rotation homotopy) one has 
 $-s_+ \na_{s_-}= s_- \na_{s_+}$. 

Since we can always form the standard form of elements in $K^G(A)$ 
by the method of Proposition \ref{pstand}, $K(A)$ has the formal 
shape 
as Cuntz' $kk(\C,A)$ for locally convex  algebras $A$ 
by \cite[Theorem 7.4]{cuntz} and Phillips' 
$K$-theory  $R K_0(A)$ for  Fréchet algebras $A$ \cite{phillips}, 
which generalizes 
$K$-theory for Banach algebras \cite{blackadar}.

The next main result is similar to 
Corollary \ref{cor1} with a
similar structural proof. But now we are going to check 
how the 
equivalences in $K$-theory within 
$GK^G$-theory play down to
the 
equivalences in classical $K$-theory, thus obtaining 
the improved result.

\begin{theorem}    \label{thm42}

In very special $GK^G$-theory, 
for each object $A$, the group $GK^G(\C,A)$ is classical $K$-theory.
In other words, we have 
an abelian group isomorphism 
$$GK^G(\C,A) 
\cong  K^G(A)  . $$

\end{theorem}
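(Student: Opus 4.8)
The plan is to exhibit the natural comparison map $\Phi : K^G(A) \to GK^G(\C,A)$ induced by the set-theoretic inclusion $S_1(\C,A) \subseteq L_1 \subseteq L$ followed by the canonical projection $L \to GK^G$, and to show that it is a well-defined isomorphism of abelian groups. \emph{Well-definedness and additivity} of $\Phi$ come directly from the defining relations of $GK^G$ together with Definition \ref{def42}: a homotopy in $S_1$ (Definition \ref{def41}) is by construction an element of $L_1 GK^G(\C,A[0,1])$, so its two endpoints are identified in $GK^G$ by homotopy invariance, relation (g) of Subsection \ref{subsec21}; a trivial element $u \na_u$ is zero in very special $GK^G$, since $s \na_s = 0$ there; and direct sum of standard diagrams corresponds to addition in $GK^G$ by \cite[Lemma 9.9]{gk}. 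Hence the relation $\equiv$ of Definition \ref{def42} is collapsed by $\Phi$, and $\Phi$ is additive by the same lemma.

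\emph{Surjectivity} of $\Phi$ is then immediate: by Corollary \ref{cor1} every element of $GK^G(\C,A)$ lies in $L_1 GK^G(\C,A)$, and by Proposition \ref{pstand} each such level-one morphism may be brought to standard form, i.e.\ is represented by an element of $S_1(\C,A)$.

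The substantial content is \emph{injectivity}, which I would obtain by constructing the inverse $\Psi : GK^G(\C,A) \to K^G(A)$ via the standardization procedure of Corollary \ref{cor1}: given a word $w \in L$ representing an element of $GK^G(\C,A)$, apply Propositions \ref{prop02} and \ref{pstand} repeatedly to extract a standard-form element of $S_1(\C,A)$, and pass to its class in $K^G(A)$. The whole difficulty is to verify that this class is independent of the chosen word, equivalently that each defining relation (a)--(g) of $GK^G$ moves a standard representative only within a single $\equiv$-class. Relation (g) is handled by standardizing the connecting homotopy through a parametrized version of Proposition \ref{pstand} and invoking the evaluation convention of Definition \ref{def41}, so that endpoints equal in $GK^G$ become $\sim$-homotopic in $S_1$. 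The associativity, distribution, composition and identity relations (a),(b),(c) are absorbed because the product formulas of Proposition \ref{prop02} are explicit and standardization of a genuine homomorphism composite matches on the nose. The only remaining moves are corner-embedding inversion (d) and the splitexactness relations (f); tracing them through the computation of Proposition \ref{prop02} shows that they alter a standard element at most by adjoining a trivial summand $u \na_u$ or by a stabilizing direct sum, both of which are exactly what $\equiv$ permits.

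\emph{The main obstacle} is precisely this last bookkeeping: to confirm that no relation generates an identification strictly finer than $\equiv$, so that passage to standard form is a genuine function on $GK^G(\C,A)$ and not merely on $L$. Once $\Psi$ is known to be well-defined, $\Psi \circ \Phi = \id_{K^G(A)}$ follows because standardizing an already standard element returns it up to a $\sim$-homotopy and a trivial summand (Proposition \ref{pstand} applied to standard input), while $\Phi \circ \Psi = \id$ holds because the extracted standard form equals the original morphism in $GK^G$ by the very construction in Corollary \ref{cor1}. Compatibility with additive inverses uses the rotation homotopy giving $-s_+ \na_{s_-} = s_- \na_{s_+}$ noted after Definition \ref{def42}, so that $\Phi$ and $\Psi$ are mutually inverse group homomorphisms, yielding $GK^G(\C,A) \cong K^G(A)$.
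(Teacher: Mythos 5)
Your proposal is correct and follows essentially the same route as the paper: both rest on Corollary \ref{cor1} and Proposition \ref{pstand} to reduce everything to standard form, on the standardization procedure built from the formulas of Propositions \ref{pstand} and \ref{prop02} to define the map $GK^G(\C,A)\to K^G(A)$, on the verification that each defining relation of $GK^G$ descends to the relation $\equiv$ of $K^G$ (which you rightly flag as the main obstacle, and which the paper likewise outsources to a separate lemma of identity-by-identity bookkeeping), and on the observation that the inverse is induced by the plain inclusion $S_1(\C,A)\subseteq L$. The only difference is organizational -- you present the easy inclusion-induced map first and invert it, whereas the paper constructs the standardization map $\Psi$ on words first and then notes its inverse is the inclusion (Lemma \ref{lemma51}) -- which does not change the mathematical content.
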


\begin{proof}
 Our goal is to define a well-defined map 
 $\Phi: L(\C , A) \rightarrow S_1 (\C,A)$ 
 which canonically descends 
 to $\Psi: GK^G(\C,A)   \rightarrow K^G(A)$. 

{\bf Definition of $\Phi$.} 
 Let a representant $\alpha \in L(\C,A)$ of $GK^G(\C,A)$ be given. 
 Expand $\alpha$ by 
 multiplying all brackets appearing 
 in $\alpha$ according to the distribution and associativity laws out (in any order), and call this new element $\beta \in L(\C,A)$.
 
 Since the order of how this is done is irrelevant at the end,
 this operation is well-defined, so a {\em  map} $\beta=\beta(\alpha)$, and because of the associativity and distributions laws included in 
 $R$ we get $\alpha  \equiv_R \beta$. 
 This element is of the form
 $\beta = \sum_{k=1}^n  \prod_{i=1}^{n_k} a_{k,i}$, where
 $a_{k,i} \in \Theta$ are generators. 

  To proceed with the set $N \subseteq L$ comprised of all such expanded elements $\beta$ of that form, we recursively define 
  a {\em map} $F:N \rightarrow S_1$ 
  as 
  $$F \Big (\sum_{k=1}^n  \prod_{i}^{n_k} a_{k,i} \Big )
   := \bigoplus_{k=1}^n   \Big (...((( \chi(\id_\C) 
   \odot  a_{k,1}) \odot a_{k,2} ) \odot  a_{k,3} )  ...  \odot a_{k,{n_k}}  \Big )  , $$
   where the ``multiplication'' map $\odot$ is defined as
   \be{eq19}
   Z:S_1  \times 
   \Theta \rightarrow S_1: 
   (x,a) \mapsto x \odot a  := S \big (P \big ( x , \chi(a)  \big ) 
   \big ) 	,
   \en
   provided the product is valid in the sense that $x$ and $a$ is composable to $xa$ in $L$ because range and source fit together. 
   The function  $\Phi$ is then defined by $\Phi(\alpha):= F(\beta(\alpha))$. 
Note that $\Phi$ is additive as both $F$ and $\beta$ are. 
   
   Here, 
	$S: L_1 \rightarrow S_1$ is a function, where $S(x)$  means the level-one standard form 
   $t_+ \na_{t_-} \in S_1$   
   of $x \in L_1$ 
   that comes out 
   when applying the {\em formulas} 
	$t_\pm := \big ( s_- \oplus 
q \big) \cdot 
\ad (U_{\pi/2})$  
of 
   Proposition  
   \ref{pstand}  
   to $ s_+ \na_{s_-} := x$. 
    
    Further, $\chi: \theta \rightarrow L_1$ is the function, where $\chi(a) \in L_1$ means the 
    level-one representations for the generator 
	$a \in \Theta$ according to the {\em formulas} of 
Lemma \ref{lem1}. 
     
     Finally, 
$P: S_1 \times L_1 \rightarrow L_1$ 
is a function 
(the domain of it restricted to the composable elements), 
where 
	$P \big ( x , y )$ stands 
     for the level-one element $t_+ \na_{t_-} \in L_1$ that comes out when applying
     the 
	{\em formulas} 
	$t_\pm:=
  \hat s_\pm   \circ p_+ 	\oplus \hat s_\mp \circ p_- $
of 
	Proposition \ref{prop02}  to the elements $p_+ \na_{p_-} := x \in 
S_1$ 
     and  $s_+ \na{s_-} := y \in L_1$   
     ($p_\pm$ and $s_\pm$ obviously defined). 
      
      Note that we do not apply 
		Lemma \ref{lem1} nor 
		Propositions \ref{pstand} and \ref{prop02} directly,
      but only use their formulas, 
      which may also be 
      used in $L$.
   Summarizing, roughly speaking we simply define 
   $\Phi$ by successive application of 
		Propositions 
   \ref{pstand} and   
   \ref{prop02}. 


 {\bf Equivalences to be checked.} 
We need now 
check if 
the 
equivalence relation $R$ 
defining $GK^G$, see Subsection \ref{subsec21},    
passes through $\Phi$ to the equivalence
relation $\equiv$		
defining $K^G$, see Definition \ref{def42}.  
Hence we 
claim that, for all $x,y \in S_1 (\C,A)$ and
$a \in \Theta$ (and provided composability in what follows), 
\be{e5}
x \equiv y  \quad \Rightarrow \quad  x \odot a \equiv y \odot a   , 
\en 
\be{e3}
(x \oplus y) \odot a \equiv  (x \odot a) \oplus  (y \odot a)    .
\en

Moreover, 
we need to verify that 
\be{e18}
x \equiv x  \odot 1_A   , 
\en
\be{e7}
((x \odot \varphi) \odot \psi) \equiv x \odot (\varphi \psi) 
\en
for all composable  homomorphisms $\varphi$ and $\psi$, 
and if $a:  A \rightarrow B[0,1] $ is a homotopy that 
\be{e10}
x \odot a_0 \equiv y \odot a_1   .
\en 
 The other relations come from split exactness and corner 
 embeddings and we need to check that
\be{e1}
x \equiv ((x \odot \Delta_s ) \odot i) \oplus ((x \odot f) \odot s)  ,
\en 
\be{e3b}
x \equiv ((x \odot  i) \odot \Delta_s)   ,
\en 
\be{e8}
x \equiv ((x \odot e^{-1} ) \odot e), \qquad 
x \equiv  
((x \odot e ) \odot e^{-1})    .
\en

To revisit, the  
above assertions and relations tell us that the structure 
of $K^G$-theory and the defining relations of $GK^G$-theory, 
respectively,  
pass through $Z$. 

To keep the proof shorter, let us take these claims for granted and outsource their proof to later and finalize the theorem. 

{\bf The equivalence relation passes through $\Phi$.}  
Let us observe how the 
most complicated relation of $R$,  
$\Delta_s i + f s \equiv_R 1_M$,    
see Subsection \ref{subsec21}.(f),  passes through $\Phi$. 
   If we consider two $\alpha$s in $L$, one version where one instance of 
$1_M$  appears somewhere, and another version where only this one 
	$1_M$ is replaced by
   $(\Delta_s i + f s)$ and observe how these $\alpha$s pass through $\Phi$
   and thus through $F$, then we will realize that we need to check 
   that 
   $$F(a_1 ... a_n \Delta_s i a_{n+1} ... a_m + a_1 ... a_n f s  a_{n+1} ... a_m ) \equiv  F(a_1 ... a_n 
1_M a_{n+1} ... a_m)     ,$$
where $a_i \in \Theta$ are generators. 

We prove this by induction. 
 The induction start, 
 if $m=n$, is clear by \re{e1} 
 and \re{e18} for $x:= F(a_1 ... a_n)$ and 
 the definition of $F$. 

Induction step $m \Rightarrow m+1$:
By 
definition of $F$ we get 
$$F(a_1 ... a_n \Delta_s i a_{n+1} ... a_m a_{m+1}+ a_1 ... a_n f s  a_{n+1} ... a_m a_{m+1})$$
$$= F(a_1 ... a_n \Delta_s i a_{n+1} ... a_m) \odot a_{m+1}  \oplus  F(a_1 ... a_n f s  a_{n+1} ... a_m ) \odot a_{m+1}$$
$$\equiv F(a_1 ... a_n 
1_M a_{n+1} ... a_m) \odot a_{m+1}       ,$$

where for the last step we have used \re{e3}, \re{e5}  
and the induction hypothesis.  

We have checked that both $\Phi(\alpha)$s are equivalent with respect to $\equiv$. Similarly we do so with all other relations 
defining $\equiv_R$. Note that associativity and distribution laws  
are already handled from the transition from $\alpha$ to $\beta$, 
because different $\alpha$s by these laws yield the same $\beta$.  
Hence $\Phi$ descends to the desired map $\Psi$.

{\bf 
Injectivity of $\Psi$.}  

If $\Phi(\alpha) \equiv 0$ is zero in $K^G$-theory, then certainly also the representant $\alpha$ is zero in $GK^G$, 
because the relations $\equiv$ of $K^G$-theory hold in $GK^G$ theory as well, as well as the associativity and distribution laws, 
and  by 
	Lemma \ref{lem1}
 and 
	Propositions  \ref{pstand} and  \ref{prop02} the product
 $Z$ descends just to the ordinary product in $GK^G$-theory, 
 so it does not matter if we multiply $\alpha$ out and simplify it 
 or not. 
 

{\bf Surjectivity of $\Psi$.}   
The proof is outsourced to Lemma \ref{lemma51}.    
\if 0 
and 
reveals that 
the inverse map $\Phi^{-1}: K^G(A) \rightarrow GK^G(\C,A)$ is trivially induced by the identitcal 
embedding $S_1 \rightarrow L$. 
\fi
\end{proof}

\if 0
\begin{lemma}

If $x \in S_1$ then $S(x) \equiv x$. 

\end{lemma}

In the final proofs of this section it is understood that one has the defintions 
like $\Phi$ of the proof of theorem \ref{thm02} ib mind .
\fi

\begin{lemma}			

The yet 
unproven claims \re{e5}-\re{e8}
of the last theorem are correct.

\end{lemma}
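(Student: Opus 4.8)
The unifying observation I would build on is that, by its very construction, the map $\odot$ of \re{eq19} computes the ordinary $GK^G$-product of its two arguments, only expressed through the explicit \emph{formulas} of Lemma \ref{lem1} and Propositions \ref{pstand} and \ref{prop02} so that the result always lands in $S_1$. At the level of $GK^G$ each of \re{e5}--\re{e8} is therefore nothing but one of the defining relations of that category---composition, unit, invertibility of corner embeddings, splitexactness and homotopy invariance, together with the distributivity \re{e3} and the well-definedness \re{e5} that the recursion needs. Since deducing the $K^G$-relation $\equiv$ from plain $GK^G$-equality would be circular with the theorem itself, I would verify each claim directly inside $S_1$, checking that the two sides differ only by moves that Definition \ref{def42} absorbs: adjoining a trivial summand $u\na_u$, a homotopy $\sim$, or a permutation of direct-sum coordinates, the latter being realised by an ordinary rotation homotopy because in very special theory every $M_2$-action has the trivial form $\gamma\otimes\id_{M_2}$.

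Before the case analysis I would record two auxiliary facts. First, $S(x)\equiv x$ for $x\in S_1$: by Proposition \ref{pstand} the standardisation adjoins only the trivial element $q\na_q$ and conjugates by the homotopy $\ad(U_t)$ of \re{rotu}, both of which are $\equiv$-moves, so $S$ never alters the $K^G$-class. Second, the fusion formula $t_\pm=\hat s_\pm\circ p_+\oplus\hat s_\mp\circ p_-$ is additive in the $p$-slot and, via \re{eq36}, functorial in the $s$-slot, the only discrepancies being coordinate reorderings. The structural claims then fall out. Claim \re{e3} follows from additivity in $p$ plus a reordering homotopy; \re{e7} from functoriality of $\hat{\,\cdot\,}$ together with $S(y)\equiv y$; \re{e18} by specialising $\chi(\id_A)$ in \re{eq10} to $s_+=\id$, $s_-=0$, which gives $x\odot 1_A=(p_+\na 0)\oplus(0\na p_-)$, equivalent to $x=p_+\na_{p_-}$ by additivity of the $\na$-construction and the inverse identity $-s_+\na_{s_-}=s_-\na_{s_+}$; and \re{e10} because the formulas are natural in the target, so $x\odot a$ is literally a homotopy in $S_1(\C,B[0,1])$ whose endpoint evaluations are the two sides. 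Claim \re{e5} reduces, through \re{e3} and \re{e10}, to the statement that $\odot a$ carries a trivial element to a trivial one: for a generating homomorphism $u\na_u\odot a=(\hat a(u)\na 0)\oplus(0\na\hat a(u))\equiv 0$ is immediate, and for the synthetical generators it follows once \re{e8} and \re{e1} are available.

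The real work lies in the three relations carried by synthetical morphisms. For the corner-embedding relations \re{e8} I would use that in very special theory $e$ and $e^{-1}$ act by amplification and compression along a finite matrix block; composing them stabilises by that block, which is invisible in $K^G$ because $S_1$ is already built over infinite matrices $M_\infty(M_\infty(\cdot))$, so the block is absorbed into the ambient $M_\infty$ by a trivial summand and a shift homotopy, recovering $x$. For splitexactness I would unfold each fused term through the diagram of Proposition \ref{prop02}. Relation \re{e3b}, encoding $1_J\equiv_R i\Delta_s$, is the cleaner one: $x\odot i$ is the restriction of $x$ along the ideal inclusion and $\odot\Delta_s$ its synthetical section, so the composite reassembles $x$ up to one trivial summand. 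The genuinely intricate relation is \re{e1}, reconstructing $1_M\equiv_R\Delta_s i+fs$: here $(x\odot\Delta_s)\odot i$ and $(x\odot f)\odot s$ each traverse the full multi-line fusion separately, and I would show that their direct sum, after the reordering homotopies and the addition of the trivial correction dictated by the splitexactness axiom of Subsection \ref{subsec21}.(f), standardises back to $x$.

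The main obstacle is exactly \re{e1}, with \re{e8} a lesser cousin: the difficulty is to align the two summands' standard forms---produced by separate applications of the fusion formula with different intervening amplification sizes---into a \emph{single} rotation homotopy plus trivial correction that realises $\Delta_s i+fs=1_M$ concretely in $S_1$, rather than merely abstractly in $GK^G$ through the synthetical split. Reconciling the two amplification sizes inside the common $M_\infty(M_\infty(\cdot))$ and keeping the (now trivial) $M_2$-actions matched throughout is where the argument, though conceptually forced, becomes technically heavy.
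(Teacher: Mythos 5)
Your overall framework is the same as the paper's: you record the auxiliary fact $S(x)\equiv x$ for $x\in S_1$ (the paper's \re{eq129}), you argue that the standardisation $S$ and the fusion formula of Proposition \ref{prop02} preserve direct sums, homotopies and trivial elements (which is exactly how the paper disposes of \re{e5} and \re{e3}), and you settle \re{e18}, \re{e7} and \re{e10} by naturality of the explicit formulas, much as the paper does via the computation \re{eq66}. Your treatment of \re{e8} (the extra matrix block is a trivial summand $0\na_0$ that can be subtracted in $K^G$) also agrees with the paper. But there is a genuine gap, and you flag it yourself: the splitexactness relation \re{e1} (and with it \re{e3b}) is never actually proven. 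You assert that the direct sum $((x\odot\Delta_s)\odot i)\oplus((x\odot f)\odot s)$ ``standardises back to $x$'' after reordering homotopies and trivial corrections, but that assertion \emph{is} the claim to be established; calling it ``conceptually forced but technically heavy'' does not supply the argument, and since \re{e1} is the relation on which the induction in Theorem \ref{thm42} leans hardest, the lemma is not proved without it.

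What closes this step in the paper is not heavy bookkeeping but two concrete observations, both available only after first reducing to the simple fusion formula \re{eq200} (legitimate because one may assume $z=S(x)$, so $p_-$ maps into the matrix of units and the summand $\hat s_\mp\circ p_-$ is trivial). First, in computing $(z\odot\Delta_s)\odot i$ one notes that the standard-form entries of $u_\pm(\lambda)$ are algebraic expressions in $t_\pm(\lambda)$ and $1_{X^+}$ which happen to land in $J^+$; hence fusing with the inclusion $i:J\rightarrow M$ leaves these formulas \emph{literally unchanged} and merely reinterprets them in $M^+$, so the result is the standard form of the diagram \re{eq23} with $J$ replaced by $M$, i.e.\ \re{eq28} with $T_+=\widehat{\id}\circ p_+$ and $T_-=\widehat{fs}\circ p_+$. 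Second, the other summand is, up to $\equiv$, $w_+\na_{w_-}$ with $w_+=\widehat{fs}\circ p_+$ and $w_-=\hat 0\circ p_+$ as in \re{eq25}, and the decisive point is the coincidence $w_+=T_-$: the $fs$-term occurs as the plus-map of one summand and the minus-map of the other. A single coordinate rotation then turns $(T_+\oplus T_-)\na_{T_-\oplus w_-}$ into $(T_-\oplus T_+)\na_{T_-\oplus w_-}$, the piece $T_-\na_{T_-}=0$ cancels, and one is left with $T_+\na_{w_-}=(\widehat{\id}\circ p_+)\na_{(\hat 0\circ p_+)}\equiv z\odot\id\equiv z$ by \re{e18}; this is the chain \re{eq26}. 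Your worry about reconciling different amplification sizes inside $M_\infty(M_\infty(\cdot))$ dissolves once these two points are in place; without them, the central relation \re{e1} remains an unproven assertion rather than a verified identity in $S_1$.
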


\begin{proof}

{\bf Assertions \re{e5} and \re{e3}. } 
The 
formulas of  
Proposition 
\ref{pstand} 
show that the standard form $S$ 
is computed by $S(s_+ \na_{s_-}) 
= t_+ \na_{t_-}$  with 
 $t_\pm := (s_\pm  \oplus q) \cdot \ad(U_{\pi/2})$.  
\if 0
standard form $S(s_+ \na_{s_-}) 
= t_+ \na_{t_-}$ 
is computed 
by $t_\pm := (s_\pm  \oplus q) \cdot \ad(U_{\pi/2})$.  
\fi 
Thus it is easy to see that 
the standard form $S$   
preserves 
direct sums, 
homotopy and trivial elements $p \na_{p}$, that is, a homotopy $s_+ \na_{s_-}$ yields 
a homotopy $t_+ \na_{t_-}$ such that 
its 
evaluation at time $\lambda \in [0,1]$ is 
$t_\pm^{(\lambda)} = (s_\pm^{(\lambda)}  \oplus q^{(\lambda)}) 
\cdot  \ad(U_{\pi/2}^{(\lambda)}) = s_\pm^{(\lambda)}$, 
and similarly so for direct sums and trivial elements.

\if 0
$(s_+^{(\lambda)} \na_{{s_-}^{(\lambda)}})_{\lambda \in [0,1]}$ 
yields canonically a homotopy 
  $(t_+^{(\lambda)} 
  \na_{t_-^{(\lambda)}})_{\lambda \in [0,1]}$. 
\fi 

  Similarly 
  it is easy to see  that the fusion formula  $t_+ \na_{t_-} 
	= (\hat s_+ \circ p_+  \oplus \hat s_- \circ p_- ) \na_{\hat s_- \circ 
p_+   \oplus \hat s_+ \circ p_- } \in L_1$
  of 
	Proposition \ref{prop02}  preserves direct sums,  
homotopy and trivial elements in the variable $p_+ \na_{p_-} \in L_1$ for each (applicable) $s_+ \na_{s_-} \in L_1$,  
  	\if 0
  (for instance, $t_\pm^{(\lambda)}:= \hat s_\pm \circ p_\pm^{(\lambda)}$ yields the desired homotopy 
  for a given 
  homotopy $p_\pm^{(\lambda)}$),
	\fi    
  whence the function $Z$ stated in \re{eq19} 
  preserves direct sums, 
	homotopy and trivial elements  in the variable $x$.  
  Hence claims \re{e5} and \re{e3} are valid, 
since  each, $S$ and $P$, 
	respect the equivalence $\equiv$.   

{\bf Standard form.}
Since  
the formula of  
Proposition \ref{pstand}  
shows 
that the standard form is performed by adding on a 
trivial cycle 
and then performing a homotopy, 
by Definition \ref{def42}, 
if $x \in S_1$ then its standard form is  
equivalent in $K^G$ 
to itself, that is,    
\be{eq129}
S(x) \equiv x   .
\en


{\bf A simpler formula for $t$.} 
If $z= p_+ \na_{p_-}$ is such that $p_-$ maps into the algebra
$M_m(\C)$ of units of $M_m(M_n(\C)^+))$, for example 
if $z = S(x)$ is coming from the standard from 
formula, 
then the last line $t_+ \na_{t_-}$ of 
Proposition \ref{prop02} reduces to the simple 
form, equivalent in $S_1$ with respect to $\equiv$, 
\be{eq200} 
\xymatrix{  
 B  \ar[r]  
  & M_{n}(M_m(J)) \ar[r] &  M_{n}(M_m(X)^+) \square_{t_-}   
 \C   
\ar@<.5ex>[rrr]  
 &   &&
\C       
\ar[lll]^-{ t_\pm:=
  \hat s_\pm   \circ p_+  }	  
}
\en  
because by 
the formula of the last line of the diagram of Proposition \ref{prop02} 
then 
the summand $\hat s_\pm \circ p_-$ is a trivial element and can be subtracted in $K^G$. 

Because of the 
verified relations \re{e5}-\re{e3} 
and $S$ preserves $\equiv$ as shown,  
it is clear that the relations \re{e18}-\re{e8}   
are 
verified 
too whenever we show them only for $x$ of the form
$x= S(z)$, and thus, making that assumption, can use the simple formula 
\re{eq200} instead of the more complicated formula 
stated in the last line of the diagram of Proposition 
\ref{prop02},   
and thus in formula $Z$, \re{eq19}, if we need only equivalence 
up to $\equiv$, because both $S$ and $P$ respect this 
equivalence by the beginning of this proof.   
We shall assume this for the rest of the proof without much saying. 

{\bf Identity \re{e1}.}
{\bf (a)}  
Let us check \re{e1}. 
To this end, we need at first form 
$z \odot \Delta_s$ 
for $\Delta_s$ of 
the split exact sequence \re{eq32}.  
According to formula \re{eq19}, 
we must take 
$p_+ \na_{p_-}:=  
z \in S_1$,   
paint 
its diagram 
into to most right row of the diagram of Proposition 
\ref{prop02},  
then take the diagram \re{e12}, which represents $\chi(\Delta_s)$,  
and write it into the first line of the diagram of  Proposition 
\ref{prop02}. 
Then we apply this proposition, and read off the resulting diagram,
which represents $P(x, \chi(\Delta_s))$, 
in the 
last line of the diagram
of the 
proposition.  
Now we use the simpler version \re{eq200} 
as explained above by assuming 
$z = S(x)$. 
What comes out is
\be{eq23}
\xymatrix{
  J  \ar[r] 
  & M_n(M_m(J)) \ar[r] &  M_n(M_m(M)^+) \square_{t_-}   
 \C   
\ar@<.5ex>[rrr]
 &&&   
\C  
\ar[lll]^-{ t_+ = \widehat {\id} \circ p_+, \, 
 t_- = \widehat {f  s} \circ p_+ }
}   .
\en

Finally, according to formula \re{eq19} we need to form the standard form $u_+ \na_{u_-}$
of $t_+ \na_{t_-}$ 
by the formulas of  Proposition 
\ref{pstand}. 

\if 0
The $u_+$ may also be written in the form
\be{eq22}
u_+ = \left ( \begin{matrix} 
p^\bot  (\Delta t)  p^\bot 
&  -  p^\bot (\Delta t) p  \\
-p  (\Delta t)  p^\bot  &   p (\Delta t) p + \id  \\ 
\end{matrix}\right ). 
\en
where $\Delta t := t_+ - t_-$,  
because $p := t_-(1_\C)$ (where $p^\bot := 1-p$) 
is a unit for 
each $t_-(\lambda)$ ($\lambda \in \C$) 
and thus 
$p^\bot t_- = t_- p^\bot = 0$. 
\fi 

{\bf (b)}
The next step is to produce 
$(z  \odot \Delta_s) \odot i$,
that is, $(u_+ \na_{u_-}) \odot i$.

Then the diagram 
corresponding to $u_+ \na_{u_-}$,  
that is 
\be{eq21}
\xymatrix{
  J  \ar[r] 
  & M_{2 n}(M_m(J)) \ar[r] &  M_{2 n}(M_m(J)^+) \square_{u_-}   
 \C   
\ar@<.5ex>[rrr]
 &&&   
\C  
\ar[lll]^-{ u_\pm }    	
}      ,
\en
 has to be filled into 
the 
last row of the diagram of 
Proposition \ref{prop02}, 
and the diagram 
corresponding to the homomorphism $i$ 
by formula \re{eq10},
that is
$$\xymatrix{ 
M     \ar[r]^\id &  M \ar[r]     & M    
\square_{0 } J     \ar@<.5ex>[rr]
 &&  J   \ar[ll]^{i,\; 0} 	,
     }
$$
into the first line of the diagram of that proposition. 
The 
outcoming result, to be read off in the last line of the diagram 
of the proposition (actually \re{eq200}), 
is 
\be{eq27b}
\xymatrix{
  M  \ar[r] 
  & M_{2n}(M_m(M)) \ar[r] &  M_{2n}(M_m(M)^+) \square_{v_-}   
 \C   
\ar@<.5ex>[rrr]
 &&&   
\C  
\ar[lll]^-{ v_+ = \hat {i} \circ u_+, \, 
 v_- = \hat {0} \circ u_+ } 
}  .
\en

Because $i: J \rightarrow M$ is the 
embedding, and the formulas of 
the matrix entries of  $u_+(\lambda)$ and $u_-(\lambda)$ from 
Proposition \ref{pstand} 
are  
algebraic expressions in $t_\pm(\lambda) \in X^+$ and $1_{X^+} \in X^+$ in the algebra $X^+$ which 
happen to 
land in 
$J^+ \subseteq X^+$, it is clear that the matrix entries of $\hat i ( u_+(\lambda))$ 
and $\hat 0 (u_-(\lambda))$ are just $u_+(\lambda)$ and $u_-(\lambda)$ again 
(the latter because $u_-(\lambda)$ involves only $1_{X^+}$
and $0$ besides the scalar $\lambda$),
but now interpreted 
as elements in  
$M_{2n}(M_m(X^+))$. 
Consequently, $v_\pm = u_\pm$ in the last diagram 
as formulas. 

\if 0
as leacing the formula and computation of $u_\pm$ unchanged but interpret 
it as an element in $M_n(M_m(X))$. 
   \fi 

\if 0
Because $i: J \rightarrow M$ is the embeding 
and  
$\Delta t(\lambda)= t_+ (\lambda) - t_-(\lambda) \in M_n(M_m(J))$ 
by Subsection \ref{subsec22}.(b),
forming a product $m \cdot \Delta t(\lambda) \in J$ in $M$ for some $m \in M$ and putting it into $i$ is the same as computing $m \cdot (t_+(\lambda) -t_-(\lambda))$ in $M$. 
That is why $\hat i \circ u_+$ and $\hat 0 \circ u_+$ 
have the same {\em formulas} as $u_+,u_-$ but interpreted 
 as carried out with 
the homomoprhisms 
$t_\pm: \C \rightarrow M_{2n}(M_m(M)^+)$. 
 \fi

\if 0
Because $i: J \rightarrow M$ is the embeding, 
$\Delta t(\lambda)= t_+ (\lambda) - t_-(\lambda) \in M_n(M_m(J))$ 
by Subsection \ref{subsec22}.(b),
and  
$p = \in M_n(\C)$, by formula \re{eq22} 
the homomorphism $v_+ = \hat i \circ u_+$ 
has the same formula as
$u_+$ itself but now 
regarded 
as a homomoprhism
$\C \rightarrow M_{2n}(M_m(M)^+)$ 
(that is, $J$ embedded into $M$). 
\fi

Thus \re{eq27b}
is the same as at first replacing $J$ by $M$ 
everywhere in the diagram
\re{eq23}, 
notated 
\be{eq28}
\xymatrix{
  M  \ar[r] 
  & M_n(M_m(M)) \ar[r] &  M_n(M_m(M)^+) \square_{T_-}   
 \C   
\ar@<.5ex>[rrr]
 &&&   
\C  
\ar[lll]^-{ T_+ = \widehat {\id} \circ p_+, \, 
 T_- = \widehat {f  s} \circ p_+ }
}   ,
\en
and then forming its standard 
form, 
notated again by $v_+ \na_{v_-}$. 
 
 \if 0
That is, we get the saandard form 
$v_+ \na_{v_-} \equiv S(T_+ \na_{T_-})$.
\fi

{\bf  (c)} 
Now, according to \re{e1} we produce $((z \odot f) \odot s)$. 
First of all, by the above discussion and the similarity of all involved formulas, we see 
that $((z \odot f) \odot s)$ is the same 
as forming the standard form of $z \odot (f  s)$, because the standard form 
is formed twice in $((z \odot f) \odot s)$. 
The complicatedness of changing the ideal $J$ to $M$ as above does here even not appear. 

In particular we also have checked the relation \re{e7}. 

{\bf (d)} 
Now up to equivalence $\equiv$, $z \odot (f s)$ corresponds to 
the standard form of  
the  diagram 
(by 
\re{eq200} and assuming $z=S(x)$ is in standard form)  
\be{eq25}
\xymatrix{
  M  \ar[r] 
  & M_{2 n}(M_m(M)) \ar[r] &  M_{2 n} (M_m(M)^+) \square_{w_-}   
 \C   
\ar@<.5ex>[rrr]
 &&&   
\C  
\ar[lll]^-{ w_+ = \hat {f s} \circ p_+, \, 
 w_- = \hat {0} \circ p_+ }
}   .
\en
\if 0
which has already standard form 
since $w_- = \hat {0} \circ p_- =  p_-$ by standard form assumption on $p_+ \na_{p_-}$. 
\fi 

Thus by \re{eq129} we get 
\if 0
By remarks at  the beginning of the proof of this lemma, 
adding its standard form to the standard from $S(T_+ \na_{T_-})$ 
is the same as at first adding and then forming the standard form, that is 
\fi
\begin{eqnarray}		\label{eq26}
&& ((z \odot \Delta_s ) \odot i) \oplus ((z \odot f) \odot s)   \\
\nonumber 
&\equiv&  S( T_+ \na_{T_-}) \oplus S(w_+ \na_{w_-})    
\equiv  T_+ \na_{T_-} \oplus w_+ \na_{w_-}   \\
&\equiv&
T_+ \na_{T_-} \oplus T_- \na_{w_-} 
\equiv
  (T_+ \oplus T_-)\na_{T_- \oplus w_-}
  \equiv
  (T_- \oplus T_+)\na_{T_- \oplus w_-}   \nonumber \\
&\equiv&
  T_+\na_{w_-}
  \equiv 
  ( \hat \id \circ p_+) \na_{\hat {0} \circ p_+}
    \equiv 
  z 
 \odot \id    \equiv z  , 		
						\nonumber
\end{eqnarray}
where 
we 
trivially observed $T_+ = w_-$, rotated $T_+ \oplus T_-$ 
to $T_- \oplus T_+$ 
by an 
ordinary  
rotation homotopy in $K^G$, 
which is possible because both $T_+$ and $T_-$ 
map into the same space 
with the same $G$-action as always in very special $GK^G$-theory,
and used the addition formula of elements in $K^G$ and $T_- \na_{T_-} = 0$. 

\if 0
Indeed, the last two relations hold in general, which is easy to see. 
\fi 

 The last identity 
of \re{eq26} 
is because of \re{e18}, to be checked next.  
This proves \re{e1}. 
Relation \re{e3b} is similarly deduced and easier. 

{\bf Relation \re{e18}.}
  Let $\varphi : A \rightarrow B$ be a homomorphism 
  and 
	$z \in S_1(\C,A)$   
(assuming $z=S(x)$ is in standard form).  
At first compute 
$z \odot \varphi$  
by
evaluating the formula $t_\pm$ of  Proposition 
\ref{prop02} 
for $s_+ \na_{s_-} = \varphi \na_{0}$
according to diagram \re{eq10}
and 
$z$ denoted by $p_+ \na_{p_-}$. 

Then, by adding and subtracting zero elements of the form
$h \na_h$ 
and using \re{eq129}, 
we obtain (and 
applying \re{eq200}) 
\begin{eqnarray}   
		\label{eq66}
z \odot \varphi &=&  S \big (  (\hat \varphi \circ p_+)  
\na_{  \hat 0 \circ p_+ }   \big )  
\equiv 
(\hat \varphi \circ p_+ \oplus  \hat \varphi \circ p_-) 
\na_{  \hat 0 \circ p_+   \oplus  \hat \varphi \circ p_-}    \\
&\equiv&   
(\hat \varphi \circ p_+   \oplus  
\hat 0 \circ p_+) 
\na_{ \hat 0 \circ p_+   \oplus 
\hat \varphi \circ p_- }
\equiv (\hat \varphi \circ p_+ ) 
\na_{ 
\hat \varphi \circ p_- } 	, 
    \nonumber
\end{eqnarray} 
where we have also used that 
$\hat \varphi \circ p_- = \hat 0 \circ p_- 
= \hat 0 \circ p_+$, since $p_-$ maps only into the matrix $M_n(\C) 
\subseteq M_n( M_m(A)^+)$ of units 
by assumption of the standard form of $p_+ \na_{p_-}$, and $p_+(1_\C) - p_-(1_\C)$ 
is in the ideal $M_n(M_m(A))$ 
by Subsection \ref{subsec22}.(c), which does not pass through $\hat 0$.
%
For $\varphi = \id$ this verifies \re{e18}.

{\bf Identity \re{e8}.}  
{\bf (a)} 
Now let us focus onto the first identity of \re{e8}.
Let $e : A \rightarrow M_k(A)$  be a corner embedding. 
Again, we go back to the diagram of   Proposition 
\ref{prop02},
assume that its last row 
represents 
a given $z
\in 
S_1 (\C,M_k(A))$ 
(that means, each $A$ in that row is replaced by $M_k(A)$), 
and assume that the first line is   
the diagram  \re{e11} 
for $J$ replaced by $M_k(A)$.  
(Observe that $X$ of the diagram of 
Proposition \ref{prop02} 
becomes $M_k(A)$.) 
Forming $z \odot e^{-1}$ then only yields 
the standard form of 
the diagram  (by diagram \re{eq200}) 
\be{eq27}
\xymatrix{
  A  \ar[r] 
  & M_n(M_m(M_k(A))) \ar[r] &  M_n(M_m(M_k(A))^+) \square_{p_-}   
 \C   
\ar@<.5ex>[rrr]
 &&&   
\C  
\ar[lll]^-{  \widehat \id \circ p_+, \, \hat 0 \circ p_+ } 
}   .
\en

But by computation  \re{eq66} this is equivalent to
$\widehat \id \circ p_+ \na_{\widehat \id \circ 
p_-}$, which is
$p_+ \na_{p_-}$, because $\widehat \id = \id$ here. 
As compared to $z$, only the final algebra has changed to $A$.

{\bf (b)} 
Next we need to compute $(z \odot e^{-1}) \odot e$, 
\if 0
and according to formula \re{eq19} 
we need to form the standard form of $z \odot e^{-1}$ 
at first, which we safely skip for simplicity of the proof since we have already standard form, and then 
\fi 
and to this end 
directly apply the computation \re{eq66}, which yields
the result 
$\hat e   \circ p_+ \na_{
\hat e   \circ p_-}$.  
In diagram form it is
%
%
$$\xymatrix{
  M_k(A)  \ar[r] 
  & M_n(M_m(M_{k^2}(A))) \ar[r] &  M_n(M_m(M_{k^2}(A))^+) \square   
 \C   
\ar@<.5ex>[rr]		
 &&		
\C  
\ar[ll]^-{ 
 \hat e \circ p_\pm}			
}   .
$$

As compared to 
$z$, we are left 
with an additional corner embedding in the middle
of the diagram. 
But this is the same as if we had added the zero element
$0 \na_0$, where  
$0_\pm: \C \rightarrow M_n(M_m(  
M_{k^2-k}   (A))^+ )$, 
to $p_+ \na_{p_-}$,  
and thus we can 
remove it by subtraction in $K^G$,
and end up with the standard form of $z$.

This proves the first relation of \re{e8}, and the 
second relation 
is completely analogous. 

{\bf Identity \re{e10}.}  
Identity \re{e10} is also easy to check 
by formula \re{eq19}. If we enter a homotopy 
homomorphism as in \re{eq10} 
into the first line of the diagram of Proposition 
\ref{prop02}, a homotopy in $L_1$, see \cite[Lemma 8.10]{aspects},  will come out in the last 
of that diagram, which becomes a homotopy in $S_1$ 
when forming its standard form as already remarked at the beginning 
of this proof.  
\end{proof}

  





\begin{lemma}		\label{lemma51}

The map 
$\Psi$ of the proof of Theorem \ref{thm42} 
is surjective 
and $\Psi^{-1}(z)=z$. 

In particular, the  
isomorphism $\Psi^{-1}: K^G(A) \rightarrow GK^G(\C,A)$ is trivially induced by the 
identical  
embedding $S_1 \rightarrow L$. 

\end{lemma}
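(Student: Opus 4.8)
The plan is to produce the inverse of $\Psi$ explicitly and check it on both sides. Define $\bar\iota: K^G(A) \to GK^G(\C,A)$ by sending the class of an element $z \in S_1(\C,A)$ to the class $[z]_{GK}$ obtained by reading $z$ as an element of $L$ through the identical embedding $S_1 \hookrightarrow L$. The first thing I would verify is that $\bar\iota$ is well defined: every relation generating the equivalence $\equiv$ of Definition \ref{def42} --- homotopy, stabilisation by trivial cycles $u\na_u$, and direct sums --- is already a valid relation in very special $GK^G$-theory (homotopy invariance of Subsection \ref{subsec21}.(g), the identity $u\na_u = 0$, and additivity of $\na$). Hence $z \equiv z'$ forces $[z]_{GK} = [z']_{GK}$, so $\bar\iota$ descends to the quotient.

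Next I would show $\bar\iota \circ \Psi = \id_{GK^G}$. This is exactly the computation already made in the injectivity part of the proof of Theorem \ref{thm42}: for any representant $\alpha \in L$ the passage to $\Phi(\alpha)$ only multiplies $\alpha$ out and rewrites the resulting word via the operation $\odot$ of \re{eq19}, and by Lemma \ref{lem1} together with Propositions \ref{pstand} and \ref{prop02} this $\odot$ descends to the ordinary $GK^G$-product; therefore $[\Phi(\alpha)]_{GK} = [\alpha]_{GK}$, so $\bar\iota(\Psi([\alpha]_{GK})) = [\Phi(\alpha)]_{GK} = [\alpha]_{GK}$. This already gives that $\Psi$ is injective and $\bar\iota$ is surjective.

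The remaining, and principal, point is $\Psi \circ \bar\iota = \id_{K^G}$, equivalently $\Phi(z) \equiv z$ for every $z \in S_1(\C,A)$. Writing such a standard-form cycle $z = s_+\na_{s_-}$ as the word $g_1 \cdots g_r$ of its constituent generators from \re{eq33}, we have $\Phi(z) = (\cdots(\chi(\id_\C)\odot g_1)\odot\cdots)\odot g_r$, so $\Phi(z)$ is nothing but the standard form of the product $\id_\C \cdot g_1 \cdots g_r$ reassembled step by step. Using the structural relations \re{e5}--\re{e8} verified in the preceding lemma, I would argue by induction on $r$ that the $\equiv$-class of each partial $\odot$-product depends only on the $GK^G$-class of the partial word $g_1\cdots g_j$; after the final step this reconstructs $z$ and yields $\Phi(z) \equiv S(z) \equiv z$, the last equivalence being \re{eq129}. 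I expect this induction to be the main obstacle, since one must check that the verified relations are robust enough to make the reassembly independent of how $z$ is cut into generators. Once both composites are identities, $\Psi$ is an isomorphism whose inverse is $\bar\iota$; as $\bar\iota$ is induced by the identical embedding $S_1 \to L$, this proves surjectivity and the asserted formula $\Psi^{-1}(z) = z$.
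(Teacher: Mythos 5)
Your overall architecture agrees with the paper's: the inverse of $\Psi$ is induced by the identical embedding $S_1 \rightarrow L$, your composite $\bar\iota \circ \Psi = \id_{GK^G}$ is exactly the injectivity argument already given in Theorem \ref{thm42}, and you correctly isolate the principal point as $\Phi(z) \equiv z$ for $z \in S_1(\C,A)$. Your first two steps are sound. The gap is in the third step. The induction invariant you propose --- that ``the $\equiv$-class of each partial $\odot$-product depends only on the $GK^G$-class of the partial word'' --- is nothing more than the well-definedness of $\Psi$, which is already established in Theorem \ref{thm42}; it cannot tell you \emph{what} that class is. To pass from ``$[\Phi(z)]_{GK} = [z]_{GK}$ with both $\Phi(z)$ and $z$ in $S_1$'' to ``$\Phi(z) \equiv z$'' you would need that $[x]_\equiv \mapsto [x]_{GK}$ is injective on $S_1$-classes, i.e.\ injectivity of your $\bar\iota$ --- and that is equivalent to the statement being proved, so the induction as formulated is circular. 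Note also that the intermediate partial words ($P_+$ alone, or $P_+\Delta_{P_-}$) are not themselves elements of $S_1$, so there is no ``$\equiv$-class of the partial word'' for the partial $\odot$-products to reconstruct; the relations \re{e5}--\re{e8} all have the shape ``a certain $\odot$-combination returns the class of $x$'' and never anchor a product built up from $\chi(\id_\C)$ back to the given cycle $z$.

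What is actually required, and what the paper does, is a concrete evaluation of the three $\odot$-steps using the \emph{formulas} of Propositions \ref{pstand} and \ref{prop02}. Writing $z$ in standard form as the word $P_+\Delta_{P_-}e^{-1}$ with $P_\pm = p_\pm \oplus \id_\C$, one computes $\Phi(P_+) = S(\chi(P_+))$, which is the diagram \re{eq106} and already of sufficient standard form by \re{eq129}; then $\Phi(P_+)\odot\Delta_{P_-}$, which by the computation recycled from ``Identity \re{e1}'' of the preceding lemma is the diagram \re{eq110}, where one checks $t_+ = P_+$ and $t_- = P_-$; then $\odot\, e^{-1}$, which by the ``Identity \re{e8}'' computation yields \re{eq115}. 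Even at that point one is not finished: the middle algebra of \re{eq115} comes out as $D = M_n(M_m(A)^+)\square_{p_\pm}\C$ rather than the desired $Y = M_n(M_m(A)^+)$, and the paper closes this mismatch by replacing the extended split exact sequence of $P_+\Delta_{P_-}e^{-1}$ by that of $p_+\Delta_{p_-}e^{-1}$ through an explicit commuting-diagram equivalence in $GK^G$ (via \cite[Lemma 4.3.(iii)]{gk}), only after which the result is literally the given $p_+\na_{p_-}$. None of this formula-level work is recoverable from the formal relations \re{e5}--\re{e8} alone, so your third step is a genuine gap rather than an omitted routine verification.
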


\begin{proof}

Let us an element $p_+ \na_{p_-}$ in $S_1(\C,A)$ as drawn in 
diagrammatic 
form in the most right column of the diagram
of 		Proposition 
\ref{prop02} be given. 
By reasons explained in 
``a simpler formula for t" in the proof 
of Lemma \ref{lemma31}  we suppose that 
$p_+ \na_{p_-} = S(z)$ is in standard form.
As in \re{eq33}, 
more detailed notated it is $p_+ \na_{p_-} = P_+  \Delta_{P_-}
e^{-1}$ for 
$$P_\pm := p_\pm \oplus \id_\C : 
\C \longrightarrow 
D_\pm :=  M_n (M_m(A)^+) \square_{p_\pm}  \C   .$$
  

We propose it is the image $\Phi(P_+  \Delta_{P_-}
e^{-1})$ up to equivalence $\equiv$, 
verifying that 
$\Psi$ is surjective, thus bijective,
and 
the last claims of the lemma.   
To prove this, we compute it step by step and 
begin with 
$$\Phi(P_+ ) = F(P_+ )= \chi(1_\C) \odot P_+ 
= S 
\big (P 
\big (\chi(1_\C) ,  
\chi(P_+  \big ) \big ) = S \big (\chi(P_+ ) \big ).$$ 
Here, the used 
identity $P 
(\chi(1_\C) ,  z ) = z$ is easy to see,  
so let us skip this step.

\if 0
Here, the last identity is easy to see 
so let us skip this step.
\fi

Now $\chi(P_+ )$ 
corresponds to the 
extended double split exact sequence 
\be{eq106}
\xymatrix{ 
 D \ar[r]     
&   D   \ar[r]   & 
D      
\square_0 
\C  
		\ar@<.5ex>[rr]   
 & &      \C   
\ar[ll]^{  P_+      
, \, 0 }       
}   .
\en 

\if 0
Here, non-equivariantly 
$D:=  M_n (M_m(A)^+) \square_{p_+}  \C$  
and we have the homomorphism $P_+ f_2 : \C \rightarrow M_2(D)$. 

Then $\Phi(P_+ f_2 f_1^{-1}) \equiv \chi(P_+ f_2) \odot f_1^{-1}$. 
\fi 

Because of the $0$ split it has already sufficient standard form 
as explained in the paragraph before \re{eq200}, 
and thus we can omit computing its $S$ by \re{eq129} and use \re{eq200} next.   
As already discussed in 
``Identity \re{e1}'' 
of 
Lemma \ref{lemma31}, 
see result 
\re{eq23},  fusing it with $\Delta_{P_-}$ 
(that is, 
computing  $\Phi(P_+ \Delta_{P_-})   
= 
\Phi(P_+) \odot  \Delta_{P_-}$)  
is 
the standard form of 
a 
diagram of the form
\be{eq110}
\xymatrix{ 
 N \ar[r]^{\id}		
&   N   \ar[r]   & 
D      
\square_{t_-} 
\C   
	\ar@<.5ex>[rrrr]
& &  & &    \C   
\ar[llll]^-{ t_+:=\widehat \id \circ  P_+   ,  
     \; t_- :=\widehat{g P_-} \circ  P_+     }       
}       ,    
\en 
where  $N:= M_n (M_m(A))$   
and $g: D \rightarrow \C$ is the canonical projection of the split exact sequence of the last column of the diagram of 
Proposition 
\ref{prop02}. 
 
 Because we have constant three $D$s in \re{eq106}, $\widehat \id = \id$
 and $\widehat  {g P_-} = {g P_-}$. 
 
 Consequently, $t_+ = P_+$ and $t_- = P_- \circ g \circ P_+
 = P_-$, and thus 
diagram \re{eq110}  has already sufficient standard form.

As already observed in the proof part ``Identity \re{e8}"
of 
   	Lemma \ref{lemma31},
fusing it with the inverse corner embedding $e^{-1}$  
(that is, 
computing  $\Phi(P_+ \Delta_{P_-} e^{-1} )  
= 
\Phi(P_+  \Delta_{P_-} ) \odot   e^{-1} $)  
yields 
%
\if 0
changes only the last corner embedding 
$\id$
in \re{e110} to $e: A \rightarrow B$, and thus we are almost finished.
\fi 
%
\be{eq115}
\xymatrix{ 
 A \ar[r]^{e }   
&   N   
\ar[r]  		&
D		
\square_{P_-}			
\C   
\ar@<.5ex>[rr] 
 & &      \C   
\ar[ll]^-{ P_+  , \;    P_- }  
}   .
\en 

\if 0
But this is equivalent in $GK^G$ theory to

Here $D$ is not quite correct. To get precisly the 
result with $Y:=M_n (M_m(A)^+)$ instgead of $D$, we note at first 
that $P_\pm$ is in the last diagram more precisely the the maps 
$P_\pm \oplus \id_\C : \C \rightarrow D \square \C \subseteq (Y \oplus \C) \oplus \C$. 
We add the trivial element $0 \na_0$ for $0: \C \mapsto B$ such that sum homomorphsims 
are 
 $P_\pm  \oplus 0 \oplus \id_\C = p_\pm \oplus (\id_\C \oplus 0) \oplus \id_\C$.
We regard it as a sum of $p_+\na_{p_-} + q_+ \na_{q_-}$, where
$q_\pm:=  (\id_\C \oplus 0) :\ C \rightarrow \C \oplus B$ (a space where $\lambda \cdot b = 0$),
subtract $q_+ \na_{q_-}$ and end up with the desired 
$$p_+\na_{p_-} = P_+ \na_{P_-} + 0 \na_0 - q_+ \na_{q_-}. $$ 
\fi

\if 0
Here $D$ is not quite correct. To get precisly the 
result (with $Y$ instgead of $D$), we recal from `'Line 3'', that we have an isomorphism
$q : D_- \rightarrow Y$ 
unless $p_- =0$,  and redefine $P_\pm := p_\pm$, $g$ and 
$\Delta_{P_-}$ 
as taken from the (in $GK^G$ equivalent) double split exact sequnece  
$$   
\xymatrix{ 
    B \ar[rr]_i   
  &&  Y    
\ar@<.5ex>[rr]^g   \ar@<-.5ex>[ll]_{\Delta_{P_-}}   
 &&  \C   \ar[ll]^{ P_\pm } 
     }
    $$

For $p_-=0$, we replace $P_+$ by $p_-:\C \rightarrow Y$, omit $\Delta_{P_-}$ and 
can procced correctly with 
\re{eq106} in place of \re{eq110}. 
\fi

\if 0
For $p_-=0$ we replace $\Delta_{P_-}$ by $\id_Y$ and get 
as in \re{eq110} correctly $t_+= \widehat \id \circ P_+ = P_+$
and $t_+ = \widehat 0 \circ P_+ = 0$. 
\fi

Here $D$ is not quite 
desired.  To get 
precisely the 
result with $Y:=M_n (M_m(A)^+)$ instead  of $D$, we 
note that we have a 
canonical 
homomorphism 
$q : D \rightarrow Y$ projecting into the first summand of $D$,   
and replace the 
extended split exact sequence associated to 
$P_+ \Delta_{P_-} e^{-1}$  in the above computation  
by   
the extended split exact sequence associated to 
$$   
\xymatrix{ 
    A \ar[r]^e   & N \ar[r]   
  &  Y    
\ar@<.5ex>[r]^h    \ar@<-.5ex>[l]_{\Delta_{p_-}}   
 &  \C   \ar[l]^{ p_\pm }       
     }	, 
    $$
which is 
equivalent in $GK^G$ to the former one 
(indeed connect the diagrams of 
$P_+ \Delta_{P_-} e^{-1}$ and $p_+ \Delta_{p_-} e^{-1}$  
with canonical arrows and the arrow $q$ in the middle 
and verify with \cite[Lemma 4.3.(iii)]{gk}) 
and thus gives the same 
result 
\re{eq115} in $K^G$ as before since we already have proven 
that $\Phi$ is well-defined, 
but this is now evidently 
the at the very beginning given $p_+ \na_{p_-}$ by the so 
modified  diagram 
\re{eq115}, that is, $D$ replaced by $Y$.   
\if 0
For $p_-=0$, we replace $P_+$ by $p_-:\C \rightarrow Y$, omit $\Delta_{P_-}$ and 
can procced correctly with 
\re{eq106} in place of \re{eq110}. 
\fi
\end{proof}

\if 0
Our final task in this section is dedicated to the functorial properties of 
our $K$-theory results. 
\fi 

$GK^G(\C,-)$ is 
a functor $\Lambda^G \rightarrow GK^G$ by assigning $A$ to $GK^G(\C,A)$ 
and $f:A \rightarrow B$ to the map 
$f_*(z) := z \cdot f$ (product in $GK^G$) 
for $z \in GK^G(\C,A)$.   

$K^G$ 
may be regarded as a category of abelian groups and abelian group 
homomorphisms, 
and as a functor as well, as usual: 

\begin{definition}		\label{def51} 
{\rm 

$K^G$ 
is a functor 
$\Lambda^G \rightarrow K^G$ assigning $A$ to $K^G(A)$ and 
$f:A \rightarrow B$
to $K^G(f)(s_+ \na_{s_-}) := (\hat f  \circ  s_+ ) \na_{(\hat f \circ s_- )}$
for all $s_+ \na_{s_-} \in S_1(\C,A)$, see \re{eq36} for $\hat f$.     

}
\end{definition}

\if 0
\begin{definition}

$K^G$ is functorial in the sense that it can be regarded as a functor 
from $\Lambda^G$ to the category $K^G$
assigning as object $A$ to $K^G(A)$ and a morphism
%
$f:A \rightarrow B$
to $F(f)(s_+ \na_{s_-}) = (s_+ \circ \varphi) \na_{(s_- \circ \varphi)} \in K^G(B)$.  

\end{definition}
\fi 

\if 0
functioriality of $K^G$ and isomorphism

$K^G$-theory is functorial by replacing a given element 
$s_+ \na_{s_-} \in S_1(\C,A)$ in definition \re{def41} 
by $(s_+ \circ \varphi) \na_{(s_- \circ \varphi)} \in S_1(\C,A)$ 
for any equivariant homomoprhism $\varphi :A \rightarrow B$. 
\fi

\begin{lemma}

The isomorphism of 
Theorem  \ref{thm42} 
respects functoriality in the variable $A$. 

\end{lemma}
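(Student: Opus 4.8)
The plan is to check that for every homomorphism $f : A \rightarrow B$ in $\Lambda^G$ the square
\[
\Psi_B \circ f_* \;=\; K^G(f) \circ \Psi_A
\]
commutes, where $f_*(z) := z \cdot f$ is the action of the functor $GK^G(\C,-)$ and $K^G(f)(p_+ \na_{p_-}) := (\hat f \circ p_+) \na_{(\hat f \circ p_-)}$ is the action of $K^G$ from Definition \ref{def51}. Since $\Psi$ is a group isomorphism by Theorem \ref{thm42}, it is equivalent, and more convenient, to establish naturality of its inverse $\Psi^{-1}$, which by Lemma \ref{lemma51} is simply induced by the inclusion $S_1 \hookrightarrow L$, that is $\Psi^{-1}\big( [p_+ \na_{p_-}]_{K^G} \big) = [p_+ \na_{p_-}]_{GK^G}$. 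Thus the whole task reduces to proving, for every $z = p_+ \na_{p_-} \in S_1(\C,A)$, the single identity
\[
[p_+ \na_{p_-}]_{GK^G} \cdot f \;=\; \big[ (\hat f \circ p_+) \na_{(\hat f \circ p_-)} \big]_{GK^G}
\]
in $GK^G(\C,B)$.

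First I would reduce to the case where $z$ is already in standard form, using $S(z) \equiv z$ from \re{eq129}; recall that for such $z$ the map $p_-$ lands in the scalar matrix units. Next I would recall from the injectivity argument in the proof of Theorem \ref{thm42} that the auxiliary operation $\odot$ implements the genuine $GK^G$-product: since $\chi(f)$ represents $f$ in $GK^G$ by Lemma \ref{lem1}, and both $P$ (from Proposition \ref{prop02}) and $S$ (from Proposition \ref{pstand}) descend to the product and to the identity in $GK^G$ respectively, one has $[z \odot f]_{GK^G} = [z]_{GK^G} \cdot f$.

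The decisive step is then simply to re-read computation \re{eq66}, which was carried out for an arbitrary homomorphism $\varphi$ and established $z \odot \varphi \equiv (\hat\varphi \circ p_+) \na_{(\hat\varphi \circ p_-)}$. Every equivalence $\equiv$ appearing there, namely adding and subtracting trivial cycles, the standard-form relation \re{eq129}, and the observation $\hat\varphi \circ p_- = \hat 0 \circ p_+$ forced by standardness, holds verbatim in $GK^G$, because all the defining relations of $K^G$ are relations of $GK^G$; hence the same chain of identities yields $[z \odot f]_{GK^G} = [(\hat f \circ p_+) \na_{(\hat f \circ p_-)}]_{GK^G}$. Combining this with the previous paragraph gives the required identity, and therefore $f_* \circ \Psi^{-1} = \Psi^{-1} \circ K^G(f)$, which is the asserted naturality.

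I expect the only real point of care to be the compatibility of the matrix amplifications built into $\hat f = (f \otimes \id_m)^+ \otimes \id_n$ with the amplifications implicit in the definition of $S_1$ and of the standard form; but since $\hat{\,\cdot\,}$ is defined precisely so as to act entrywise and to be stable under further amplification by \re{eq36}, this is bookkeeping rather than a genuine obstacle. The substance of the lemma is entirely contained in computation \re{eq66}, read one level up in $GK^G$ instead of in $K^G$.
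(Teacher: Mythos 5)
Your proposal is correct and takes essentially the same route as the paper: both reduce to a standard-form representative $z \in S_1$ with $\Phi(z)=z$ via Lemma \ref{lemma51}, use that the operation $\odot$ implements the genuine $GK^G$-product, and conclude by computation \re{eq66}. The only (cosmetic) difference is that you verify naturality of $\Psi^{-1}$ inside $GK^G$ by lifting the equivalences of \re{eq66} there, whereas the paper verifies naturality of $\Psi$ directly in $K^G$ via the chain $\Phi(z \cdot f) = \Phi(z) \odot f = z \odot f = K^G(f)(z)$; since $\Psi$ is an isomorphism the two formulations are interchangeable.
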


\begin{proof}

Recall that the isomorphism is performed by a map $\Phi$ on the $L$-level, 
and by 
Theorem 
\ref{thm42} and 
Lemma \ref{lemma51}  we may assume $z \in S_1$ 
and $\Phi(z)=z$.
Then for a homomorphism $f:A \rightarrow B$ we get 
$\Phi(z \cdot f) = \Phi(z) \odot f = z \odot f = K^G(f)(z)$, 
where the last identity is by identity \re{eq66} 
and 
Definition \ref{def51}. 
\if 0
, which is $f^*(z)$ as defined in Definition \ref{}
according  to identity \re{eq66}. 
\fi
\if 0
on both sides
Indeed we have proved this 
by the formula \re{eq66} of the proof 
Lemma \ref{lemma31} and the fact that
the isomorphism  of theorem \ref{thm42} is 
induced by a map $\Phi$ such that
$\Phi(x \cdot \varphi) = x \odot \varphi = 
\varphi_*(x)$. 

wohldef mit iso of theorem
\fi
\end{proof}

\bibliographystyle{plain}
\bibliography{references}

\begin{thebibliography}{10}

\bibitem{blackadar}
B.~{Blackadar}.
\newblock {\em {\(K\)-theory for operator algebras. 2nd ed.}}
\newblock Cambridge: Cambridge University Press, 2nd ed. edition, 1998.

\bibitem{bgenerators}
B.~{Burgstaller}.
\newblock {The generators and relations picture of $KK$-theory}.
\newblock 2016.
\newblock arXiv:1602.03034v2.

\bibitem{aspects}
B.~{Burgstaller}.
\newblock {Aspects of equivariant $KK$-theory in its generators and relations
  picture}.
\newblock 2019.
\newblock arXiv:1912.03275v1.

\bibitem{gk}
B.~{Burgstaller}.
\newblock {A kind of $KK$-theory for rings}.
\newblock 2021.
\newblock arXiv:2107.01597.

\bibitem{conneshigson2}
A.~{Connes} and N.~{Higson}.
\newblock {Almost homomorphisms and $KK$-theory}.
\newblock 1990.
\newblock Downloadable manuscript.

\bibitem{conneshigson}
A.~{Connes} and N.~{Higson}.
\newblock {D\'eformations, morphismes asymptotiques et \(K\)-th\'eorie
  bivariante}.
\newblock {\em {C. R. Acad. Sci., Paris, S\'er. I}}, 311(2):101--106, 1990.

\bibitem{cortinasthom}
G.~{Corti\~nas} and A.~{Thom}.
\newblock {Bivariant algebraic \(K\)-theory}.
\newblock {\em {J. Reine Angew. Math.}}, 610:71--123, 2007.

\bibitem{cuntz1983}
J.~{Cuntz}.
\newblock {Generalized homomorphisms between \(C^*\)-algebras and KK-theory.}
\newblock {Dynamics and processes, Proc. 3rd Encounter Math. Phys.,
  Bielefeld/Ger. 1981, Lect. Notes Math. 1031, 31-45 (1983).}, 1983.

\bibitem{cuntz1984}
J.~{Cuntz}.
\newblock {K-theory and \(C^*\)-algebras.}
\newblock {Algebraic \(K\)-theory, number theory, geometry and analysis, Proc.
  int. Conf., Bielefeld/Ger. 1982., Lect. Notes Math. 1046, 55-79 (1984).},
  1984.

\bibitem{cuntz1987}
J.~{Cuntz}.
\newblock {A new look at KK-theory.}
\newblock {\em {\(K\)-Theory}}, 1(1):31--51, 1987.

\bibitem{cuntz}
J.~{Cuntz}.
\newblock {Bivariante \(K\)-Theorie f\"ur lokalconvexe Algebren und der
  Chern-Connes-Charakter}.
\newblock {\em {Doc. Math.}}, 2:139--182, 1997.

\bibitem{cuntz2}
J.~{Cuntz}.
\newblock {Bivariant \(K\)-theory and the Weyl algebra}.
\newblock {\em {\(K\)-Theory}}, 35(1-2):93--137, 2005.

\bibitem{cuntzthom}
J.~{Cuntz} and A.~{Thom}.
\newblock {Algebraic \(K\)-theory and locally convex algebras}.
\newblock {\em {Math. Ann.}}, 334(2):339--371, 2006.

\bibitem{ellis}
E.~{Ellis}.
\newblock {Equivariant algebraic \(KK\)-theory and adjointness theorems}.
\newblock {\em {J. Algebra}}, 398:200--226, 2014.

\bibitem{garkusha1}
G.~{Garkusha}.
\newblock {Algebraic Kasparov \(K\)-theory. I}.
\newblock {\em {Doc. Math.}}, 19:1207--1269, 2014.

\bibitem{garkusha2}
G.~{Garkusha}.
\newblock {Algebraic Kasparov \(K\)-theory. II}.
\newblock {\em {Ann. \(K\)-Theory}}, 1(3):275--316, 2016.

\bibitem{grensing}
M.~{Grensing}.
\newblock {Universal cycles and homological invariants of locally convex
  algebras}.
\newblock {\em {J. Funct. Anal.}}, 263(8):2170--2204, 2012.

\bibitem{guentnerhigsontrout}
E.~{Guentner}, N.~{Higson}, and J.~{Trout}.
\newblock {\em {Equivariant \(E\)-theory for \(C^*\)-algebras}}, volume 703.
\newblock Providence, RI: American Mathematical Society (AMS), 2000.

\bibitem{higson}
N.~{Higson}.
\newblock {A characterization of KK-theory}.
\newblock {\em {Pac. J. Math.}}, 126(2):253--276, 1987.

\bibitem{higson2}
N.~{Higson}.
\newblock {Categories of fractions and excision in KK-theory}.
\newblock {\em {J. Pure Appl. Algebra}}, 65(2):119--138, 1990.

\bibitem{higsonkasparov}
N.~Higson and G.~Kasparov.
\newblock {{\(E\)}}-theory and {{\(KK\)}}-theory for groups which act properly
  and isometrically on {Hilbert} space.
\newblock {\em Invent. Math.}, 144(1):23--74, 2001.

\bibitem{kasparov1981}
G.~G. {Kasparov}.
\newblock {The operator K-functor and extensions of C*-algebras}.
\newblock {\em {Math. USSR, Izv.}}, 16:513--572, 1981.

\bibitem{kasparov1988}
G.~G. Kasparov.
\newblock {Equivariant $KK$-theory and the Novikov conjecture}.
\newblock {\em Invent. Math.}, 91:147--201, 1988.

\bibitem{lafforgue}
V.~{Lafforgue}.
\newblock {\(K\)-th\'eorie bivariante pour les alg\`ebres de Banach et
  conjecture de Baum-Connes}.
\newblock {\em {Invent. Math.}}, 149(1):1--95, 2002.

\bibitem{phillips}
N.~C. Phillips.
\newblock {{\(K\)}}-theory for {Fr{\'e}chet} algebras.
\newblock {\em Int. J. Math.}, 2(1):77--129, 1991.

\bibitem{weidner}
J.~{Weidner}.
\newblock {KK-groups for generalized operator algebras. I}.
\newblock {\em {\(K\)-Theory}}, 3(1):57--77, 1989.

\bibitem{weidner2}
J.~{Weidner}.
\newblock {KK-groups for generalized operator algebras. II}.
\newblock {\em {\(K\)-Theory}}, 3(1):79--98, 1989.

\end{thebibliography}

\if 0
\section{}

the results sow, that we can form dirac dual dirac , namely
$\beta \circ \alpha \in L_1$
for $\alpha \in GK(\C,A), \beta \in GK(A,\C)$ 
\fi

\if 0
\begin{proof}

Let us an element $p_+ \na_{p_-}$ in $S_1(\C,A)$ as drawn in daigramatic 
form in the most right column of the diagram
of proposition \ref{prop02} be given. 

More detailed notated it is $p_+ \na_{p_-} = P_+ f_2 f_1^{-1} \Delta_{P_-}
e^{-1}$ for $P_\pm := p_\pm \oplus \id_\C$.
  

We propose it is the image $\Phi(P_+ f_2 f_1^{-1} \Delta_{P_-}
e^{-1})$. 
To prove this, we compute it step by step and 
begin with 
$$\Phi(P_+ f_2) = F(P_+ f_2)= \chi(1_\C) \odot P_+ f_2
= S \Big (P \Big (\chi(1_\C)  \cdot \chi(P_+ f_2 \big ) \big ) = S \big (\chi(P_+ f_2) \big ).$$ 
Here, the last identity is easy to see 
so let us skip this step.
  
Now $\chi(P_+ f_2)$ correpsonds to the 
extended double split exact sequence 
$$\xymatrix{ 
 M_2(D) \ar[r]     
&   M_2(D)   \ar[r]   & 
M_2(D)      
\square_0 
\C  
		\ar@<.5ex>[rr]   
 & &      \C   
\ar[ll]^{ ( P_+   f_2 )   
, \, 0 }       
}$$
Here, non-equivariantly 
$D:=  M_n (M_m(A)^+) \square_{p_+}  \C$  
and we have the homomorphism $P_+ f_2 : \C \rightarrow M_2(D)$. 

Then $\Phi(P_+ f_2 f_1^{-1}) \equiv \chi(P_+ f_2) \odot f_1^{-1}$.

As already observed in the proof part ``Identity \re{e8}'' 
of lemma \ref{lemma3.1}, fusing it with the inverse corner embedding
$f_1^{-1}$ is just the diagram
$$\xymatrix{ 
 D \ar[r]^{f_1}     
&   M_2(D)   \ar[r]  		&
M_2(D)      
\square_0 
\C   
 & &      \C   
\ar[ll]^{ ( P_+   f_2 )   
, \, 0 }       
}$$

As already discussed in `'Identity \re{e1}'' 
of lemma \ref{lemma3.1}, fusing it with $\Delta_{P_-}$ 
is a digram of the form
$$\xymatrix{ 
 B \ar[r]^{f_1'}     
&   M_2(B)   \ar[r]   & 
M_2(D)      
\square_{t_-} 
\C   
& &  & &    \C   
\ar[llll]^{ t_+:=\widehat \id \circ ( P_+   f_2 ) ,  
     \; t_- :=\widehat{g P_-} \circ ( P_+   f_2 )   }       
}$$
where  $B:= M_n (M_m(A))$   
and $g: D \rightarrow \C$ is the canonical projection of the split exact sequence of the last column of the diagram of proposition \ref{prop02} 
 
 Now, trivially,  
 $$ \widehat{f P_-} \circ ( P_+   f_2 ) = P_+ f_2  \cdot (f \otimes \id_{M_2})  \cdot (P_- \otimes \id_{M_2})
= P_- f_2, $$
whence $t_\pm = P_\pm f_2$. 

We can then subtract the summand being the trivial null level-one lement $0 \na_0$ 
($0:\C \rightarrow D \square_{p_-} \}C$)  from the above diagram
and thus obtain exactly the last row of the diagram of 
proposition \ref{prop02}.

\end{proof}
\fi

\if 0

%

\section{auslagerung}

\section{}

wie shcon erwähnt, bei prop \ref{prop02}, auch wenn
$s_\pm$ gleich e aktion m zielberisch,
hat $v_\pm$  
im allg nicht, da zb,
$s_+$ invariante projektion in der corner, aber
nicht mehr in der anderen unteren korner,
sodass aktion im zielbereich verändert werden muss,
während für $s_-=\id$, zb, nicht

\section{}

$M_2$ aktion von prop

$v_\pm \cdot \pi$, aktion $\pi^{-1} \cdot \gamma_\pm \cdot \pi$

für $\pi = \ad (U_t)$
 
 $$v_\pm \cdot \pi 
 = v_\pm \cdot \gamma_\pm \cdot \pi 
 = v_\pm \cdot \pi \cdot \pi^{-1} \cdot \gamma_\pm \cdot \pi$$

\section{}

aktion $\ad v$ für $v:=$ shift auf $\ell^2(G)$ dürfte
aktion auf $M_n$ ebenfalls invariant unter unitalisierung, da ja schon unitalisiert 

damit GJ brauchbar für diase matrizen, wenn $A$

\section{}

in \cite[Lemma 7.5]{gk} 
stimmt nicht, da man ja die $M_2$ aktion auf
$M_2(X)$ auf die unitalisierung $M_2(X^+)$
erweitern müsste, was ja im allg nicht geht

\section{}

inspektion des beweises von \cite[prop 9.7]{gk}  
zeigt, das bein der möglichen matrix-inversen fusionierung,
das ideal $M_n(J)$ nicht eine aktion hat, die auf die
unitalisierung $M_n(J^+)$ erweiterbar wäre,denn
die aktion in der mitte, sage $M_n(X)$,
ist kooridnatenweise
von der form $s_-(V_2 V_3^{-1}(1_A)) \alpha (\alpha^{-1}(j))$, für $j \in J$, und das ist aber ein multiplier von und *in $A$, nach voraussetzung,
aber nicht *in $J$

\section{}

scheinbar geht auch der standardsplit $\Delta$
nicht so nach der obigen methode 
in *equivarianter $KK$-theory rein, da man
ja nicht so kanonisch von $A$ auf $A^+$ extenden ,
erweitern, kann

\section{}

$A_i$ submodule von $A$, dann
$$\calk_A( \oplus_i^n A_i \oplus A) \subseteq M_{n+1} (A)$$

eibettung $\phi$

damit $f^{-1} = (\phi e)^{-1} = e^{-1} \phi^{-1}$

  ---
  
  $A_i = \psi(B) A  \cong^?  B \otimes_\psi A$
  
  ---
  
  $\theta_{a,\phi} \theta_{b,\psi} = \theta_{a \phi(b), \psi}$

  \section{}
  
  wohldefiniertheit,
  
  nehme alle algs mit approx einheit
  
  $$a_j \pi(v_j) \;    x_i \pi_i(b_i)  \; \pi_i(c_i) y_i$$
  
  ---
  
  $$  c_1 \sigma(b_1) \pi(a_1) 	\;  \pi(a) \sigma(b) c  $$ 
  
  F
  
  ---

 $$  c_1 \sigma (b_1 \pi(a_1)) 	\;  \sigma(\pi(a) b) c  $$ 
  
  wähle nun approx einheiten für $a= a_\lambda$ für $A$ und $b= b_\lambda$ für $B$

\section{}

schreibe jedes level 1 morphism mit generaliziertes
kornerembedding
als level 2 morphism mit $\varphi e^{-1}  \cdot z$,
wo das erste product level one mit ordinary mathrox embedding,
und zweites level one mit überhaupt keiern matrix embedding

--->Y damit das gane nur mit normalen
matrix embedding für k-theory   ???

\section{}

\begin{definition}
{\rm

modul smeiprojektiv, wenn $\cale \subseteq A^m$ $A$-submoudl
sodass 

$\forall \eta \in A^n : \forall \phi_1, ... , \phi_n: \exists \xi \in \cale:
\widehat \phi_i(\eta ) = \phi_i(\xi)$
 
 allgemeiner mit limes

$\widehat \phi_i(\eta ) = \lim_j \phi_i(\xi_j)$  $\forall i$
  
} 
\end{definition}

\begin{lemma}

$\calf M_B$ semi projective

\end{lemma}

\begin{proof}

$\calf \subseteq K_B(\cale \oplus B)^n \subseteq K_B(B^m)^n = K_B(B^{mn})$

damit

$\calf M_B \subseteq (K_B(\cale \oplus B) M_B)^n \subseteq (K_B(B^m) M_B)^n  = (K_B(B,B^m))^n = (B^m)^n$

setze

$K:=K_B(\cale \oplus B)$

$\pi: K \rightarrow K_B(B^M)^n =: X$

obige einbettung
 
 funktionalraum
 
 $\Theta_B(\calf M_B)= M_B \circ \Theta_B(\calf M_B)$
 
 $\Theta_B(K M_B)  = M_B K $

 $\Theta_B( X M_B) = M_B X$

erste funktional einbettung:

$\widehat{m_b \phi}(k m_c) := m_b \widehat \phi(k m_c)
= m_b \widehat \phi(k)  m_c \in B$
 
erste einbettung semi projectiv:

  $= m_b \widehat \phi(\xi)  m_c  =  m_b  \phi(\xi  m_c) $
  
  mit $\xi \in \calf$
  
zweite einbettung semi projectiv:

$\widehat{m_b k}(x m_c) := \pi( m_b k) x m_c 
= \pi(m_b) \pi(k) x m_c$

nun ist $\pi(m_b) = m_b$ da unverändert in ausgezeichneter korner

und $k = k \lambda_j$ mit approx einheit in $K$

also

$= m_b \pi(k) \pi(\lambda_j) x m_c$

nun betrachte $z:=\pi(\lambda_j) x m_c$
 als abbidlung $B \rightarrow \cale \oplus B$ (???? zu genauer kären)
 und damit in $K_B(B,\cale \oplus B) = K M_B$
 
 damit
 
 $= m_b k \; (z)$
 
  zusammenfassend doppeleinbettung:
  
  $\widehat {\widehat \phi}( \widehat {\widehat \eta})
  = {\widehat \phi}( {\widehat \xi})  = \phi(\xi)$

\end{proof}

\begin{lemma}

$\cale \subseteq A^n$ pseude projective, then

$f:\calk_A(\cale ) \rightarrow  \calk_A(A^n)$
$f(\theta_{\xi,\phi})= \theta_{\xi, \widehat \phi}$

injective homomorphism

\end{lemma}

\begin{proof}

well defined:

$\sum_i \theta_{\xi_i,\phi_i} = 0$

$\sum_i \xi_i \phi_i (\xi) = 0$

sei $\eta \in A^n$, wähle $\xi \in \cale$ sodass $\widehat \phi_i(\eta)= \phi_i(\xi)$

dmit

$\sum_i \theta_{\xi_i,\widehat\phi_i} (\eta) = 
= \sum_i \xi_i \widehat \phi_i (\eta) 
= \sum_i \xi_i \phi_i (\xi) = 0$

injectiv:

werte letztes nur auf dem teilmodul $\cale$ aus

\end{proof}

\begin{lemma}

$\cale \subseteq A^n$ pseudo projectiv, dann

$\cale \otimes_\pi B$ pseuod

\end{lemma}

\begin{proof}

$\Gamma: \cale \otimes_\pi B \rightarrow \cale \subseteq A^n$

wohldef:

$(\phi_i \otimes \psi_i)(\xi_j \otimes b_j)=
  \psi_i ( \pi(\phi_i (\xi_j)) b_j) = 0$
  
  bräuchte hiere, dass für jedes funktional auf $A^n$ die
  einschränkung auf $\cale$ ein funktional von auf $\cale$ ist
  
  also doch direkte summe
  
  $$\Theta_A(\cale) \subseteq \Theta_A(\calf)$$

\end{proof}

\section{}

\begin{definition}


Ein $A$-modul $\cale$ heist psoedu projektiv, wenn
es gibt einen injektiven homomorphism
$K_A(\cale) \rightarrow K_A(A^n)$,
und $K_A(\cale) \in R$, also insbes hat approx einheit. 

\end{definition}

\begin{lemma}

Wenn $\cale$ pseudo projektiv, dann hat 
$\calk_B(\cala \otimes_\pi B)$ approx einheit

\end{lemma}

\begin{proof}

Sei $\Theta_{\lambda,\mu} \in \calk_A(\cale)$ approx einheit

Wähle $\Theta_{\lambda \otimes \nu,\mu \otimes \nu }$ als gesuchte approx einheit, wo $\nu$ apporx einheit von $B$

$$\Theta_{\xi_2 \otimes b_2, \eta_2 \otimes c_2} 
\cdot \Theta_{\lambda \otimes \nu,\mu \otimes \nu }
(\zeta \otimes c)$$
$$= \xi_2 \otimes b_2 \cdot (\eta_2 \otimes c_2)
(\lambda \otimes \nu) \cdot 
(\mu \otimes \nu) 
(\zeta \otimes c)$$
$$= \xi_2 \otimes b_2 \cdot 
c_2 \cdot  \pi(\eta_2(\lambda))  \cdot \nu \cdot 
 \nu \cdot \pi(\mu (\zeta)) \cdot c $$
$$   
\rightarrow \xi_2 \otimes b_2 \cdot 
c_2 \cdot  \pi(\eta_2(\lambda) \cdot \mu (\zeta)) \cdot c $$
$$ = \xi_2 \otimes b_2 \cdot 
c_2 \cdot  \pi(\eta_2(\Theta_{\lambda,\mu}(\zeta))) \cdot c $$
$$ \rightarrow \xi_2 \otimes b_2 \cdot 
c_2 \cdot  \pi(\eta_2(\zeta)) \cdot c $$
$$= \Theta_{\xi_2 \otimes b_2, \eta_2 \otimes c_2}(\zeta \otimes c)$$
falss
$$\Theta_{\lambda,\mu}(\zeta)) \rightarrow \zeta$$
for alle $\zeta \in \cale$
nun
$$\Theta_{\lambda,\mu} \cdot \Theta_{\lambda,\mu}(\zeta))$$
$$=   \Theta_{\lambda,\mu} (\lambda  \mu (\zeta))$$
$$ = \lambda  \mu (\lambda)  \mu (\zeta)$$

eigentlich hat man summen, also
$$\sum_{i,j} \Theta_{\lambda_i \otimes \nu_j,\mu_i \otimes \nu_j }$$

\end{proof}

\begin{lemma}

$\cale \subseteq A^n$ semiprojektiv, dann $\calk_B(\cale \otimes_\pi
B) \subseteq \calk_B(B^n)$ semiprojektiv

\end{lemma}

\begin{proof}

sei $f:\cale \rightarrow A^n$ die einbettung

setze $F: \cale \otimes_\pi B \rightarrow  B^n$
$$F(\xi \otimes b):=   \pi^n(f(\xi)) b$$

für $\phi \otimes c \in (\cale \otimes_\pi B)^*$
setze
$$\widehat{\phi \otimes c} ( (b_i)_i) :=   \sum_{i=1}^n c \pi(a_i) b_i$$
wo
$\widehat \phi: A^n \rightarrow A$ 
geschrieben als $(a_i)_i \in A^n$

 Erweitert:
 
 $$\widehat{\phi \otimes c} (F(\xi \otimes b))  
 = c
 $$
 $$= \sum_{i=1}^n c \pi(a_i \; f_i(\xi))  b$$
 $$=  c \pi( \widehat \phi  (f(\xi))) b$$
 $$=  c \pi( \phi  (\xi))  b$$
 $$=  ({\phi \otimes c}) ( \xi \otimes b )$$

wohldefiniert von $\widehat{}$: 
 $$=  \sum_k ({\phi_k \otimes c_k}) ( \xi \otimes b )
 =  \sum_k  c_k \pi( \phi_k  (\xi))  b  = 0$$
dann
$$\sum_k \widehat{\phi_k \otimes c_k} ( (b_i)_i)  
=   \sum_k   \sum_{i=1}^n c_k \pi(a_{k,i}) b_i $$

Sei $\nu$ approx einheit von $A$, dann 
gibt es nach vorr, $\xi_{k,i}$ sodass
$$\widehat {\phi_k} (\nu e_i) = a_{k,i} = \phi_k(\xi_{k,i})$$

also ist
 $$ \sum_k \sum_i ({\phi_k \otimes c_k}) ( \xi_{k,i} \otimes b_i )
 =  \sum_k  \sum_i  c_k \pi( \phi_{k}  (\xi_{k,i}))  b_i  = 0$$

Erweiterungseigenschaft:

ist mit obigen gezeigt
  
\end{proof}

\begin{definition}

ein submodul $\cale \subseteq \calf$ hat die funtionalerweiterungseigenschaft, wenn

\end{definition}


 

 \section{}

To begin, 
we regard $GK^G(\C,B)$ as the set-theoretical quotient $M/R$ 
of the set $M$ of all valid formal expressions in the generator set of $GK$-theory and the addtion sign $+$, the multiplication sign $\cdot$ and the brackets  $($ and $)$, 
and 
the relation et $R$ of $GK$-theory 
including the algebraic standard relations
associativity and distribution law. 
 
 Write $M(\C , A) \subseteq M$ for those expressions which start at object $\C$ and end at object $A$.

 \section{}
 
 Now because we have no ideal $J$ any mor involved
in \re{eq28} and \re{eq25}, that is the synthecal splits $\Delta$
are are trivially realized 
homomoprhisms  $F$,  
we can separate the 
plus and minus functions in $L_1$ 
and write 
\re{eq26} as 
$$ \equiv ( \hat \id \circ p_+) \na_0 - ( \hat {f s} \circ p_+ ) \na_0 
 +  ( \hat {f s} \circ p_+ )  \na_0  
 -( \hat {0} \circ p_+  ) \na_0 \equiv z 
 \odot \id    \equiv z  , $$

\section{}

From now an all moudles are right functional modules. 
 
\begin{definition}

{\rm 
Let $\cale$ and $\calf$ right functional $A$-moduels. 
An embedding $U:\cale \rightarrow \calf$ 
as ordinary right $A$-modules 
has the {\em functional extensin proerty} 
if 
for each 
$\phi \in \Theta_A(\cale)$ 
there exists an extension $\widehat \phi \in \Theta_A(\calf)$ 
(that is $\widehat \phi_i \circ U = \phi_i$), such that
%
for all finite number 
$\phi_1, \ldots , \phi_n \in \Theta_A(\cale)$ and $\eta \in \calf$
there exists a $\xi \in \cale$ 
satisfying  
$$\widehat \phi_i(\eta) = \phi_i(U(\xi)), \qquad \forall 1 \le i \le n.$$ 

(The correct version for topological modules 
was
that there exists
a net $(\xi_k)$ in $\cale$ such that
$\widehat \phi_i(\eta) = \lim_k \phi_i(\xi_k)$ 
 $\forall 1 \le i \le n$.) 
 
}
\end{definition} 


The composition of corner embeddings yields a corner emebdding again, and
a new module $\calf \cdot M_B$ appearing in this context is shown to be a valid:

\begin{lemma}

Let $\cale$ be a right functional $B$-module and $\calf$
a right functional $K$-module for $K:=\calk_B(\cale \oplus B)$.
Assume that the module injections $U:\cale \oplus B \rightarrow B^n$ (which leaves the distinguished coordinate $B$ unchanged) and 
$V:\calf \rightarrow K^m$
have the functional extension proerty. 

Write $M_B \cong B$ for the obvious corner algebra of $K$.
As exmplained in ... , $\calf \cdot M_B$ is a fucntinal 
$B$-modules such that 
one ha an algebra 
isomorphism $\calk_K(\calf) 
\cong \calk_B(\calf \cdot M_B)$. 
 
Then $\calf \cdot M_B \subseteq B^{nm}$ 
has the functional extension proerty.




\end{lemma}

\begin{proof}

Set $X:= \calk_B(B^n)$. 
By lemma ... we have an embedding $K \subseteq X$ of algebras.

By assumption of the functional extension properties we 
get ordinary $B$-module 
embeddings
$$\xymatrix{
\calf \cdot M_B 
\ar[r]^{\pi}   &
K^m \cdot M_B 
\ar[r]^{\sigma}   &
X^m \cdot M_B  
& \cong B^{n m}
}$$
(recall $\calk_B(B^n) \cong M_n(B)$). 
Our aim is to show that each of the above two inclusions 
called $\pi$ and $\sigma$, respectively, 
have the functional extionsion property. 
It is easy to see that in general two such inclusions 
combine to a final inclusion $\sigma \circ \pi$
with the functional extionsion property
again, yielding the desired result.

We remark, that the product $\cdot$ used here means the ordinary 
composition $\circ$ of functions in $K$. That is why $K \cdot M_B = K \circ M_B
\cong \calk_B(B, \cale \oplus B)$. 
 Write $m_b \in M_B \subseteq K$ for the corner multiplication operators $m_b( \xi \oplus c)= 0 \oplus b c$ for $b,c \in B, \xi \in \cale$. 

{\bf (c)} 
Recall from ... that
$$\Theta_B(\calf \cdot M_B) = 
M_B \cdot \Theta_K(\calf) :=
M_B \cdot \{\phi|_{\calf \cdot M_B}| \, 
\phi \in \Theta_K(\calf) \},$$   
%
Define analogously  
$$\Theta_B( K^m \cdot M_B):= M_B \cdot \Theta_K(K^m),
\qquad 
\Theta_B (X^m \cdot M_B) := M_B \cdot \Theta_X(X^m).  $$

{\bf (d)}  
Given 
functionals 
$m_{b_i} \phi_i \in  M_B \cdot \Theta_K(\calf)$ 
for $1 \le i \le N$ 
and denoting the functional extensions of $\phi_i \in \Theta_K(\calf)$ by $\widehat{\phi_i} \in \Theta_K(K^m)$ according to $V$,
define its extensions to  $M_B  \cdot \Theta_B( K^m )$ by 
$$\widehat{m_{b_i} \phi_i}(\eta m_c):= 
m_{b_i} \widehat {\phi_i} (\eta m_c)   
= m_{b_i} \widehat {\phi_i} (\eta) m_c = m_b \phi_i(\xi) m_c 
= (m_{b_i} \phi_i) (\xi m_c) \in B$$
where for the given $\eta \in K^m$, the $\xi \in \calf$ was chosen
according to the functional extension property for $V$.  
So we have defined the extension of functionals of the module emebddeing $\pi$, 
obviously seen that it is well-defined, and verified the functional extension property for $\pi$.

{\bf (e)} 
%
Given 
$m_{b_i} \phi_i \in  M_B \cdot \Theta_K(K^m)$ 
for $1 \le i \le N$ 
and denoting the functional extensions of $\phi_i \in \Theta_K(K^m)
\cong K^m$ by $\widehat{\phi_i} \in \Theta_X(X^m) \cong X^m$ 
according to the following formula:

If $\phi_i(\xi)= \sum_{j=1}^m k_{i}^{(j)} \xi^{(j)} \in K$ 
for $k_i, \xi \in K^m$, then set  
$\widehat{\phi_i}(\eta):= \sum_{j=1}^m \mu(k_{i}^{(j)}) \eta^{(j)}$ 
for all $\eta \in X^n$.
 
Without loss of generality assume that $k_i^{(j)} = \theta_{\alpha_{i,j}, \psi_{i,j}} \in K$ 
for $\alpha_{i,j} \in \cale \oplus B$ and $\psi_{i,j}
\in \Theta_B(\cale \oplus B)$, 
and $\eta^{(j)} = \theta_{s_j,\gamma_j} \in X$ 
for $s_j \in B^n$ and $\gamma_j \in \Theta_B(B^n)$. 

 Then
 $$ \widehat{\phi_i}(\eta)=  \sum_{j=1}^m  
 \theta_{U(\alpha_{i,j}), \widehat{ \psi_{i,j}}} 
 \theta_{x_j, \gamma_j}
 =  \sum_{j=1}^m  
 \theta_{U(\alpha_{i,j}) \widehat{ \psi_{i,j}}(s_j),
  \gamma_j}
  =  \sum_{j=1}^m  
 \theta_{U(\alpha_{i,j}) { \psi_{i,j}} (\beta_j), \gamma_j}
 $$
 whehre the $\beta_j \in \cale \oplus  B$ have been chosen 
 according to the functionaö extensin property for $U$. 
 
 
 Setting $\xi^{(j)}:= \Theta_{\beta_j,\gamma_j} \in \calk_B(B^n , \cale \oplus B)$, we will interpret 
  $\xi^{(j)} \cdot m_{c_i} \in \calk_B(B , \cale \oplus B) 
  = K \cdot M_B$.

 \if 0
 Then
 $$ \widehat{\phi_i}(\eta)=  \sum_{j=1}^m  
 \theta_{U(\alpha_{i,j}), \widehat{ \psi_{i,j}}} (\eta^{(j)})
 = \sum_{j=1}^m  
  U(\alpha_{i,j}) \widehat{\psi_{i,j}} (\eta^{(j)})
  = \sum_{j=1}^m  
  U(\alpha_{i,j}) {\psi_{i,j}} (\xi^{(j)}), $$
  where we have chosen $\xi \in K^m$ according to the 
  functional extenson property for $U$. 

  Note that $U$ leaves the distinguished coordinate $B$ unchanged,
  whence $b_i U(\alpha_{i,j}) = b_i \alpha_{i,j}$ and thus  
  \fi

\if 0
 $$\widehat{\phi_i}(\eta):= \sum_{j=1}^m \mu(k_{i}^{(j)}) \eta^{(j)} 
 =  \sum_{j=1}^m \mu(k_{i}^{(j)})  \mu(\lambda) \eta^{(j)}
 =  \mu \sum_{j=1}^m k_{i}^{(j)}  \xi^{(j)}
 = \mu (\phi_i(\xi)) \in X$$
 for $\eta \in X^m$, and where $\lambda \in K$ is an appropiate 
 element of an approxiamte unit of $K$, and  
 $\xi^{(j)} := \mu^{-1}(\mu(\lambda) \eta^{(j)})$
 dependent on $\eta$. 
\fi 

Then define 
functionals on $X^m \cdot M_B$ by 
%
$$\widehat{m_{b_i} \phi_i}( \eta 
m_c):= 
m_{b_i} \widehat {\phi_i} (\eta m_c) 
= m_{b_i} \widehat {\phi_i} (\eta) m_c 
=  (m_{b_i} {\phi_i}) (\xi m_c)$$
  where in the last step we have used that  
   $U$ leaves the distinguished coordinate $B$ unchanged,
  whence $b_i U(\alpha_{i,j}) = b_i \alpha_{i,j}$.


We have 
defined and checked the functional 
extension property for $\sigma$. 


\end{proof}


  \subsection{Commuting diagrams}
  
  A typical technique to show that two elements in $GK^G$-theory 
  are identical is to use a commuting digramms, and
  in particular the following one:
  
   $$\xymatrix{ 
 B \ar[r]^e    \ar[d]^\beta
&   J   \ar[r]^i   \ar[d]^\gamma  &  E   \ar[r]^{f_1}
\ar[d]^\phi      
&  M_2(E)   \ar[d]^{\Phi}   
&     E   \ar[l]^{f_2}   \ar@<.5ex>[rr]^f   
\ar[d]^\psi    &  &    A   
\ar[ll]^{ s_\pm }      \ar[d]^\alpha   \\
  B'   \ar[r]^{e'}    &    J'   \ar[r]^{i'}    
   &  E'    \ar[r]^{f_1'}   
   &  M_2(E')   
&  E'    \ar[l]^{f_2'}     \ar@<.5ex>[rr]^{f'}  
  &  &    A'    \ar[ll]^{s_\pm'}      
}$$

In the above diagram each line represents an extended double split exact sequence 
   like in \re{eq35} (where the translation is: $E$ above is $X \square_{s_\pm \oplus \id_A}$ of \re{eq35}, and $s_\pm$ above is $ {s_\pm \oplus \id_A}$ of of \re{eq35},
   and the transiotionns corner emebeddings $f_1,f_2$ explained 
   in subsection \ref{subsec22} are also 
   drawn). 
   Actually, the arrows $s_-$ and $f$ are connected with the left most $E$ in reality, but only connected to the right most $E$ 
   for pictorial simplicity. 
   Recall that both $E$s carry different $G$-actions usually. 
   %
   
   The vertical arrows $\alpha,\beta,\gamma,\phi,\psi$ and $\Phi$ are 
   any morphisms in $GK^G$.
   (Typically $\phi=\psi$ is a homomoprhism and $\Phi= \phi \otimes \id_{M_2})$.  
   
   Then we have the following fact used in the paper ...  throught:
   
   \begin{lemma}
   $ s_+ \na_{s_-} \cdot \beta  = \alpha \cdot s_+' \na_{s_-'} $ 
   in $GK^G$ 
   if the above diagram commutes everywhere 
   in $GK^G$ and $f s_- \phi = \phi f' s_-'$ in $GK^G$.
   
   \end{lemma}

\section{}

Dehne aus aktion von $(M_n(A),\alpha)$
 zu $(M_n(A^+), \alpha \oplus \id_{M_n(\C)})$ 
 
 ----
 
 lasse per induktions voraussetzung dass
 aktion auk $M_n(\C)$ immer trivial
 in $D_-$
 
 damit sollte man auf $X^+$ ebenfalls so vortsetzen könne
 in prop \ref{prop02}
 
 -----
 
 wie aktion auf standform level one prop
 
 ----
 
 $M_n(\C)^+$ invaraint, trivial aktion, da nach vor 
 standard form in diese unteralg einbettet
 
 ----
 
 brauche jedoch für die aktion auf $M_2$ quadratik 
 
 ---
 
 bzw nur essential ideal in itself 
 
 -----
 
 man könnte aber vllt verlangen, dass $M_2$-aktion so aussieht
 wie sie aussieht, um gebrauch quadratik zu vermeiden 
 
 -----
 
 mit normaler kornereinbettung braucht man aber auch kein 
 ess idel in itself und kein quadratik
 
 -----
 
 jedoch hat man auf $M_2$ die multiplikations aktion,
 also mit module, und man braucht ess ideal in itself 
 
 -----
 
 zudem muss auf matirzen die aktion immer über
 modul aktion definiert sein, also $\ad(T)$

 \section{}
 
 $$\xymatrix{ 
 D \ar[r]^{f_1}    
&   M_2(D)   \ar[r]^i   & E   \ar[r]^{e_1}     
&  M_2(E)  &     E   \ar[l]^{e_2}    &  &    \C   
\ar[ll]^{ ( P_+   f_2 )   \oplus \id_\C }       \\
  B   \ar[r]  \ar[u]^j  &    B   \ar[r]^{j}    \ar[u]^{ j f_1 } 
   & D    \ar[r]^{f_1}   \ar[u]_{f_1 \oplus \alpha} 
   &  M_2(D)   \ar[u]^\beta        
&  D    \ar[l]^{f_2}  \ar[u]_{f_2 \oplus \alpha} 
  &  &    \C    \ar[ll]^{P_+}   
  \ar[u]   
}$$

$E:=   M_2(D)      
\square_{ (P_+ f_2)  \oplus \id_\C }  \C  $
$ = M_2(D) \oplus \C$ 

$D_1:=  M_n (M_m(A)^+) \square_{P_+}  \C$ 
$= B \oplus \C$

$P_+:= (p_+ \oplus \id_\C)$
 
 $B:=   M_n(M_m(A)^+) $

 $ e_{ij} \otimes e_{ij}$
 
 $\beta(   (x_{ij}   \oplus \lambda_{ij}) \otimes  e_{ij} ) = (((x_{ij}  
   \oplus 
 \lambda_{ij})    \otimes e_{ij})   
  \oplus \lambda_{ij})     \otimes e_{ij}   
  $

 \section{}
 
 $$\xymatrix{ 
 D \ar[r]^{f_1}    
&   M_2(D)   \ar[r]^i   & E   \ar[r]^{e_1}     
&  M_2(E)  &     E   \ar[l]^{e_2}    &  &    \C   
\ar[ll]^{ ( P_+   f_2 )   \oplus \id_\C }       \\
  D   \ar[r]  \ar[u]  &    D  \ar[r]^{j}    \ar[u]^{  f_1 } 
   & F    \ar[r]^{f_1}   \ar[u]_{f_1 \oplus \alpha} 
   &  M_2(F)   \ar[u]^\beta        
&  F    \ar[l]^{f_2}  \ar[u]_{f_2 \oplus \alpha} 
  &  &    \C    \ar[ll]^{P_+  \oplus \id_\C}   
  \ar[u]   
}$$

$F:=  D \square_{P_+ \oplus \id_\C}  \C$ 
$= D \oplus \C$

 $\beta(   (x_{ij}   \oplus \lambda_{ij}) \otimes  e_{ij} ) = ((x_{ij}  
      \otimes e_{ij})   
  \oplus \lambda_{ij})     \otimes e_{ij}   
  $

 \section{}
 
 $$\xymatrix{ 
 B \ar[r]^{f_1}    
&   M_2(B)   \ar[r]^i   & E   \ar[r]^{e_1}     
&  M_2(E)  &     E   \ar[l]^{e_2}    &  &    \C   
\ar[ll]^{ ( p_+   f_2 )   \oplus \id_\C }       \\
  B   \ar[r]  \ar[u]  &    B   \ar[r]^{j}    \ar[u]^{  f_1 } 
   & D    \ar[r]^{f_1}   \ar[u]_{f_1 \oplus \alpha} 
   &  M_2(D)   \ar[u]^\beta        
&  D    \ar[l]^{f_2}  \ar[u]_{f_2 \oplus \alpha} 
  &  &    \C    \ar[ll]^{p_+ \oplus \id_\C}   
  \ar[u]   
}$$

$E:=   M_2(B)      
\square_{ ( p_+ f_2)  \oplus \id_\C }  \C  $
$ = M_2(B) \oplus \C$

$D:=  M_n (M_m(A)^+) \square_{p_+ \oplus \id_C}  \C$ 
$= B \oplus \C$

 $\beta(   (x_{ij}   \oplus \lambda_{ij}) \otimes  e_{ij} ) = ((x_{ij}  
      \otimes e_{ij})   
  \oplus \lambda_{ij})     \otimes e_{ij}   
  $

 \section{}
 
 $$\xymatrix{ 
 U \ar[r]^{h_1}    
&   M_2(U)   \ar[r]^i   & E   \ar[r]^{e_1}     
&  M_2(E)  &     E   \ar[l]^{e_2}    &  &    \C   
\ar[ll]^{ ( P_\pm   f_2 )   \oplus \id_\C }       \\
  U   \ar[r]  \ar[u]  &    U   \ar[r]^{j}    \ar[u]^{ h_1 } 
   & D    \ar[r]^{f_1}   \ar[u]_{f_1 \oplus \alpha} 
   &  M_2(D)   \ar[u]^\beta        
&  D    \ar[l]^{f_2}  \ar[u]_{f_2 \oplus \alpha} 
  &  &    \C    \ar[ll]^{P_\pm}   
  \ar[u]   
}$$

$U:=   M_n(M_m(A)) $

\section{}

\begin{proposition}

An inverse  matrix emebdding $e^{-1}:M_n(A)$ can be fused 
with an $L_1$ element $s_+ \na_{s_-} \in L_1 GK^G(M_n(A),B)$ in $GK^G$ theory to an level one element $t_+ \na_{t_-} $

\end{proposition}

\section{}

To this end we 
$$\xymatrix{ 
 \C \ar[r]     
&   \C   \ar[r]^i   & C \square_0 \C   \ar[r]  
&  M_2(C \square_0 \C)  &     C \square_0 \C   \ar[l]    &  &    \C   
\ar[ll]^{ \id_\C }       
}$$
 
 Abbreviate $B:= M_n( M_m(\C)^+))$ and 
 $D:=  M_n( M_m(\C)^+)) \square_{P_\pm} \C$ 

----

bmerkug:

wenn $z \in S_1(\C,A)$ dann ist
$s(z)\equiv z$, sodass man gar icht mher standard form
machen muss

\begin{proposition}

Consider any given level-one element $s_+ \na_{s_-}$ and 
any corner emebedding $e$ as  indicated in the top area 
of the diagram of proposition \ref{}. 
Then their product is an 
a level one-elemnt $v_+ \na_{v_-}$
as indicated in the second line of that  diagram, that is, 
$e^{-1} s_+ \na_{s_-} = v_+ \na_{v_-}$ in $GK^G$. 

Assumig w.l.o.g. that $m=1$, the $G$-action on the 
range algebra $M_n(X)$
of $v_\pm= s_\pm \otimes \id_n: M_n(A) \rightarrow M_n(X)$ is
given by first embedding $M_n(X)$ into 
$$M_n(X^+)  
\cong \call_{X^+}( (X^+)^n)
\cong \call_{X^+} ( (A^+)^n \otimes_{s_\pm^+} X^+)$$
and taking  in the latter space the $G$-action
$\ad( U \otimes_{s_\pm^+} V_\pm)$,
where $\ad(U)$ is the $G$-action on $M_n(A^+) \cong
\call_{A^+}( (A^+)^n)$ and
$\ad(V_\pm)$ the one on $X^+ \cong \call_{X^+}( X^+)$.

\end{proposition}

\begin{proof}

Suppose $m=1$ without loss of generality. 
This was proved in .... for  
the special case that the $G$-action on $M_n(A)$ is 
the canonical matrix action $\alpha \otimes \id_n$ for 
a $G$-acation $\alpha$ on $A$. 
But we can readily see to allow 
any $G$-action 
by the same proof by 
just generalizing the digram of lemma, 
on which the proof is relying, by extending {\em any} given $G$-action 
on $\beta$ on $M_n(A)$ to the $G$-action $\beta \oplus \id_n$ on  
$M_n(A^+)$.

We modify also the proof in the sense that we write 
$M= \call_M(M)$. 
(ist4aad of $\call_B(\cale)$ 

M is unital 
\end{proof}

vllt kann man so jede algebra zuerst unitalisieren und die 
g-action erweitern, und damit quadratik haben,
und so auf quadrtik verzichten

\section{}

\section{}

könnte man vllt bei der korner embedding $M_2(D)$ mit $f_1,f_2$
bei prop \ref{prop02} mit dem lemma mit invariant $M_2$-space
action arbeiten, wenn man die einzel aktionen $S,T$ hat ???

---

könnte gehen, denn für $v_\pm, y_\pm$ jeweils diese bedingung
nach vorr erfüllt, und damit auch deren 
direkte summe $v_\pm \oplus y_\pm$,

jedoch $z_\pm$???

---

also $u_+,y_+$ -> $M_2$- raum aktion,
und genauso für $-$, damit alle aktionen

----

jecoch hat mat die aktionen für $y_\pm$ noch n ich gegeben,
die kommen ja rest durch $z_\pm$

---

müsste alos oben statt $e$ matirx, gleich $e' :A \rightarrow M_{2n} (A_m(A))$ erweitern

\section{}

ist nixht $A \otimes_{s_\pm} X$ beides in $X$,
jedoch differenz in compacten???

---

$a \mapsto s_-(a) x$, $(s_-(a) - s_+(a)) x \in J x \in J$

\section{}

vllt kann man bei $M_2(D)$ den unitalisierungsfall 
nehmen, da man aktion auf ganz $M_2(M_m(A)^+)$ hat 

jedoch nicht bei $M_2(D)$

\section{}

erwähnenswert vllt das system mit c* 

deswegen auch fall mit aktion 



\section{}


Write $L$ 
The category $GK^G$ 
ist the universal category generated by the homomorphisms 
between algebras, adjoint by sythetical morphisms  
$e^{-1}$ for each corner embedding $e: A \rightarrow M_n(A)$, 
and $\Delta_{S}$ (denoted sloppily by $\Delta_s$) 
for each split exact sequence in  $R^G$. 


\section{}


Using the notations of the last proposition, we write
$S(s_+ \na_{s_-}):= t_+ \na_{t_-}$ to notate the { standard form}. 
Actually we shall use this notation 
also for formal expressions within $M$, as $t_\pm$
is only an explicite formula of $s_\pm$. 


\section{}


Fro the more general $p_+ \na_{p_-}$ as in line  ... for $J=M_\infty(B)$ one simply needs to fuse another corner embedding in this diagram and gets
$$t_\pm= \big ( (s_\pm\otimes 1_\infty)^+ \otimes 1_n\big ) \circ p_+ .$$ 


\section{}

Moreover, $i_1$ is the restriction map of $l \oplus e$ for $l$ being the corner embedding of $X$, $i_2$ the restriction map 
of $\iota \oplus i$ for $\iota$ the canonical embedding, 
and $i_3$ the map 
$i_3(j + t_-(\lambda))= j + u_- (p_+(\lambda))$ 
for $j \in M_n(M_m(J)) \oplus 0$ and $\lambda \in \C$. 

\section{}

The analogous morphism $f_2^{-1} f_1 $ from \re{eq33}  in the row
of $p_+ \na_{p_-}$ 
would yield
extra two lines in the above diagram 
involving algebras $M_2(D)$ and $D$, which we have omited 
for simplity. They are treated similarly by known results,
and would yield further two similar identities as above 
and the precise result
  

Actually, the equivalnce of line one with line two was checked in
... , and the one of lines four and three in .... .
It remains to check the step from line two to three. Here the point
is, by the special form of $D$, $u_\pm$ can be simlutanously
lifted to $v_\pm$ by passing from $s_\pm$ to its unitizations
$s_\pm^+$  and still one has $s_+^+(a) - s_-^+(a) \in J$ 
for $a \in A^+$.


\section{}

$i_1 = f \square e$, $i_2 = \iota \square i$,
$i_3(x + t_-(\lambda))= x + u_+ (p_-(\lambda))$

$v_\pm 
:= (s_\pm \otimes 1_m) \otimes 1_n$ 

 $u_\pm    := 
  \hat s_\pm 
  \circ q $

\section{}


\begin{eqnarray*}
t_-(\lambda)  &=&  
=
\lambda 
\left ( \begin{matrix} 
(1-s_-)  s_+  (1-s_-)  
&  (1-s_-) s_+ s_-  \\
-s_-  s_+  (1-s_-)  &  - s_- s_+ s_- + (1 - s_-)  \\ 
\end{matrix}\right )
\end{eqnarray*}


\section{}

$$ s_+ \na_{s_-} \equiv t_+ \na_{t_-} 
\qquad : \Leftrightarrow \exists u: \; 
 (s_+ \oplus u)\na_{s_- \oplus u} 
\sim  t_+ \na_{t_-}$$   

\section{}


\begin{definition}[Classical $K$-theory]
{\rm

Define classical $K$-theory as the quotient of the set $L_1 M(\C,A)$ 
of free level one morphisms divided by homotopy $\sim$ 
of  level one morphisms, that is $K^G(A):=L_1 GK^G(\C,A) / \sim$
and turn it to an abelian group under addition $\oplus$
of level one morphisms.

}
\end{definition}


\section{}

   
   More precisely, if $x$ is not already a level-one element in $L_1$, as 
   at the start $x_{k,1}$, then we at first translate it to a level-one lement $L(x_{k,1})$. Moreover, to avoid the need for discussing exceptions, we assume actually that each $x_{k,1} = 1_\C$ by multiplying $n$ from the left with $1_\C$ and expanding; this operation is of course well-defined. 
   

\section{}

   
 The map $Z$ is then defined by translating $y \in \Theta$ to a level-one element in
 $L(y) \in L_1$, now again denoted by $y$, and then, 
 and then tanslating $x$ to standard form by the formulas of proposition ..., regarded as map from $L_1 \rightarrow L_1$, 
 and then   forming the product 
 \be{eq20}
  x \odot y := t_+ \na_{t_-} = S(x) \cdot L_1(y)
  \en
in $M$ according to the formula 
 $t_\pm:=
  (\tilde s_\pm \otimes 1_n) \circ p_+$ of proposition ... applied to $S(x)= p_+ \na_{p_-} \in L_1$ and $L(y) = s_+ \na_{s_-} \in L_1$. 
 The outcoming level-one lement $z \in L_1$ of that propositioon
 is the sought product $x \odot y := z$. 
 We set now $\Phi(\alpha):= F(\beta)$. 
 

\section{}


In the formula $v_+ = \hat {i} \circ u_+$ we actually only 
change the space $J$ everywhere in the diragram
\re{eq21} to the space $M$. 
By the formula \re{eq22} it is enough to change the matrix 
coefficient space $J$ of the image of $\Delta t$  to the matrix
coefficient space $M$. 
The $p$ and $1-p$ in formula \re{eq22} are essantially unaffected 
by $\hat i$, because 
they are units. 


\section{}

by overwriting the first line 
of the diagram of prop \ref{prop02} with the homomorphism 
diagram \ref{eq10}, where the last row $p_+ \na_{p_-}$ represents the standard form of $z$.   
 
To prove \re{e18}, we more generally check that, 
we have 

\section{}

\begin{corollary}

The Kasparov product 
$KK^G(\C,A) \otimes KK^G(A,B) \rightarrow KK^G(\C,B)$
can be performed by the above concept. 
In partular, one needs only the stabilization theorem 
but not connections nor the technical theorem.

\end{corollary}


\fi

\end{document}